\documentclass{amsart}
\usepackage{amsmath,amssymb,amsfonts}
\usepackage[mathscr]{eucal}

\usepackage{enumerate}
\newenvironment{enumroman}{\begin{enumerate}[\upshape (i)]}
                                                {\end{enumerate}}

\theoremstyle{plain}
\newtheorem{theorem}{Theorem}[section]

\newtheorem{prop}[theorem]{Proposition}

\theoremstyle{definition}
\newtheorem{definition}[theorem]{Definition}
\newtheorem{example}[theorem]{Example}

\newtheorem{remark}[theorem]{Remark}

\newcommand{\Deltaop}{{\bf \Delta}^{op}}
\newcommand{\id}{\text{id}}
\newcommand{\Hom}{\text{Hom}}
\newcommand{\Map}{\text{Map}}
\newcommand{\Aut}{\text{Aut}}
\newcommand{\map}{\text{map}}
\newcommand{\ob}{\text{ob}}
\newcommand{\iso}{\text{iso}}
\newcommand{\Sets}{\mathcal Sets}
\newcommand{\SSets}{\mathcal{SS}ets}
\newcommand{\Secat}{\mathcal Se \mathcal Cat}
\newcommand{\css}{\mathcal{CSS}}
\newcommand{\Qcat}{\mathcal{QC}at}
\newcommand{\nerve}{\text{nerve}}
\newcommand{\Sing}{\text{Sing}}
\newcommand{\Ho}{\text{Ho}}
\newcommand{\cosk}{\text{cosk}}
\newcommand{\Ext}{\text{Ext}}
\newcommand{\Hoch}{\text{Hoch}}
\newcommand{\End}{\text{End}}
\newcommand{\Fun}{\text{Fun}}
\newcommand{\Alg}{\text{Alg}}

\input xy
\xyoption{all}

\begin{document}
\title{Workshop on the Homotopy Theory of Homotopy Theories}

\author[J.E. Bergner]{Julia E. Bergner}

\address{Department of Mathematics, University of California, Riverside, CA 92521}

\email{bergnerj@member.ams.org}

\date{\today}

\thanks{The author was partially supported by NSF grant DMS-0805951.}

\begin{abstract}
These notes are from a series of lectures given at the Workshop on the Homotopy Theory of Homotopy Theories which took place in Caesarea, Israel, in May 2010.  The workshop was organized by David Blanc, Emmanuel Farjoun, and David Kazhdan, and talks not indicated otherwise were given by the author.
\end{abstract}

\maketitle

\section{Overview: Introduction to the homotopy theory of homotopy theories}

To understand homotopy theories, and then the homotopy theory of them, we first need an understanding of what a ``homotopy theory" is.  The starting point is the classical homotopy theory of topological spaces.  In this setting, we consider topological spaces up to homotopy equivalence, or up to weak homotopy equivalence.  Techniques were developed for defining a nice homotopy category of spaces, in which we define morphisms between spaces to be homotopy classes of maps between CW complex replacements of the original spaces being considered.

However, the general framework here is not unique to topology; an analogous situation can be found in homological algebra.  We can take projective replacements of chain complexes, then chain homotopy classes of maps, to define the derived category, the algebraic analogue of the homotopy category of spaces.

The question of when we can make this kind of construction (replacing by some particularly nice kinds of objects and then taking homotopy classes of maps) led to the definition of a model category by Quillen in the 1960s \cite{quillen}.  The essential information consists of some category of mathematical objects, together with some choice of which maps are to be considered weak equivalences.  The additional information, and the axioms this data must satisfy, guarantee the existence of a well-behaved homotopy category as we have in the above examples, with no set theory problems arising.

A more general notion of homotopy theory was developed by Dwyer and Kan in the 1980s.  Their simplicial localization \cite{dksimploc} and hammock localization \cite{dkfncxes} techniques provided a method in which a category with weak equivalences can be assigned to a simplicial category.  More remarkably, they showed that up to a natural notion of equivalence (now called Dwyer-Kan equivalence), every simplicial category arises in this way \cite{dkdiag}.  Thus, if a ``homotopy theory" is just a category with weak equivalences, then we can just as easily think of simplicial categories as homotopy theories.  In other words, simplicial categories provide a model for homotopy theories.

However, with Dwyer-Kan equivalences, the category of (small) simplicial categories itself forms a category with weak equivalences, and is therefore a homotopy theory.  Hence, we have a ``homotopy theory of homotopy theories," the topic of this workshop.  In fact, this category was shown to have a model structure in 2004, making it a homotopy theory in the more rigorous sense \cite{simpcat}.  (In more recent work, Barwick and Kan have given ``categories with weak equivalences" a model structure which is equivalent to this one \cite{bk}.)

In practice, unfortunately, this model structure is not as nice as a homotopy theorist might wish.  It is not compatible with the monoidal structure on the category of simplicial categories, does not seem to have the structure of a simplicial model category in any natural way, and has weak equivalences which are difficult to identify for any given example.  Therefore, a good homotopy theorist might seek an equivalent model structure with better properties.

An alternate model, that of complete Segal spaces, was proposed by Rezk in 2000, in fact before the model structure on simplicial categories was established \cite{rezk}.  Complete Segal spaces are just simplicial diagrams of simplicial sets, satisfying some conditions which allow them to be thought of as something like simplicial categories but with weak composition.  Their corresponding model category is cartesian closed, and is in fact given by a localization of the Reedy model structure on simplicial spaces \cite{reedy}.  Hence, the weak equivalences between fibrant objects are just levelwise weak equivalences of simplicial sets.

Meanwhile, in the world of category theory, simplicial categories were seen as models for $(\infty,1)$-categories.  These particular examples of $\infty$-categories, with $k$-morphisms defined for all $k \geq 1$, satisfy the property that for $k>1$, the $k$-morphisms are all weakly invertible.  To see why simplicial categories were a natural model, it is perhaps easier to consider instead topological categories, where we have a topological space of morphisms between any two objects.  The 1-morphisms are just points in these mapping spaces.  The 2-morphisms are paths between these points; at least up to homotopy, they are invertible.  Then 3-morphisms are homotopies between paths, 4-morphisms are homotopies between homotopies, and one could continue indefinitely.

In the 1990s, Segal categories were developed as a weakened version of simplicial categories.  They are simplicial spaces with discrete 0-space, and look like homotopy versions of the nerves of simplicial categories.  They were first defined by Dwyer-Kan-Smith \cite{dks}, but developed from this categorical perspective by Hirschowitz and Simpson \cite{hs}.  The model structure for Segal categories, begun in their work, was given explicitly by Pelissier \cite{pel}.

Yet another model for $(\infty,1)$-categories was given in the form of quasi-categories or weak Kan complexes, first defined by Boardman and Vogt \cite{bv}.  They were developed extensively by Joyal, who defined many standard categorical notions, for example limits and colimits, within this more general setting \cite{joyal}, \cite{joyalbook}.  The notion was adopted by Lurie, who established many of Joyal's results independently \cite{lurie}.

Comparisons between all these various models were conjectured by several people, including To\"en \cite{toen} and Rezk \cite{rezk}.  In fact, To\"en proved that any model category satisfying a particular list of axioms must be Quillen equivalent to the complete Segal space model structure, hence axiomatizing what it meant to be a homotopy theory of homotopy theories, or homotopy theory of $(\infty, 1)$-categories \cite{toencss}.

Eventually, explicit comparisons were made, as shown in the following diagram:
\[ \xymatrix{\mathcal{SC} \ar@<-.5ex>[r] \ar@<-.5ex>[drr] & \Secat_f \ar@<.5ex>[r] \ar@<-.5ex>[l]  & \Secat_c \ar@<.5ex>[r] \ar@<.5ex>[l] \ar@{<->}[d] & \css \ar@<.5ex>[l] \ar@{<->}[ld] \\
&& \Qcat \ar@{<->}[u] \ar@{<->}[ur] \ar@<-.5ex>[ull]} \]
The single arrows indicate that Quillen equivalences were given in both directions, and these were established by Joyal and Tierney \cite{jt}.  The Quillen equivalence between simplicial categories and quasi-categories was proved in different ways by Joyal \cite{joyalbook}, Lurie \cite{lurie}, and Dugger-Spivak \cite{dugspi}, \cite{dugspi2}.  The zig-zag across the top row was established by the author \cite{thesis}.  It should be noted that the additional model structure $\mathcal Se \mathcal Cat_f$ for Segal categories was established for the purposes of this proof; the original one is denoted $\mathcal Se \mathcal Cat_c$.

At this point, one might ask what these ideas are good for.  Are they just a nice way of expressing abstract ideas, or do they help us to understand specific mathematical situations?  The answer is that they are being used in a multitude of areas.  We list several here, but do not claim that this list is exhaustive.

\begin{enumerate}
\item Algebraic geometry

\noindent
Many of the models are being used for various purposes in algebraic geometry.  Authors such as Simpson, To\"en, and Vezzosi are using Segal categories \cite{hs}, \cite{tv}, Lurie is using quasi-categories \cite{luriestable}, \cite{dag3}, and Barwick is using complete Segal spaces and quasi-categories \cite{bar}.

\item $K$-theory

\noindent
Simplicial categories are used frequently in this area; we mention Blumberg-Mandell \cite{bm} and To\"en-Vezzosi \cite{tvk}; the latter mention ways to generalize their work to Segal categories as well.

\item Representation theory

\noindent
Quasi-categories are being used, for example by Ben-Zvi, Francis, and Nadler \cite{bzn1}, \cite{bzn2}, \cite{bzn3}, \cite{bzn4}, \cite{bzfn}.  The author has done some work using complete Segal spaces \cite{dhacss}.

\item Deformation theory

\noindent
Lurie is using quasi-categories \cite{dag6}, and Pridham has developed an application of complete Segal spaces \cite{prid}.

\item Homotopy theory

\noindent
Lurie has used quasi-categories to prove results in stable homotopy theory \cite{luriesh}, and other models for Goodwillie calculus \cite{lurie2}.

\item Topological field theories

\noindent
Lurie has used complete Segal spaces to prove the Cobordism Hypothesis \cite{lurietft}.

\end{enumerate}

Two of these last examples, Goodwillie calculus and the Cobordism Hypothesis, are actually applications of $(\infty, n)$-categories, in which $k$-morphisms are invertible for $k>n$, not just the case where $n=1$.  Developing models for these more general structures has begun but is still very much a work in progress, especially in terms of comparisons.

The earliest model for $(\infty, n)$-categories was that of Segal $n$-categories; in fact the work of Hirschowitz-Simpson \cite{hs} and Pelissier \cite{pel} is in this generality.  In the last few years, Barwick developed $n$-fold complete Segal spaces, and this was the model used by Lurie \cite{lurietft}.  Then Rezk defined $\Theta_n$-spaces, an alternative generalization of complete Segal spaces \cite{rezktheta}.  He conjectured that one could then enrich categories over $\Theta_{n-1}$-spaces, find Segal category analogues (which would necessarily be somewhat different from the Segal $n$-category versions), and then do a comparison along the lines of what was done in the $n=1$ case.  Developing and comparing all these models is work in progress of the author and Rezk, with similar work being done by Tomesch.

With this overview and motivation given, we turn to understanding many of these ideas in more detail in the remaining talks of the workshop.

\section{Models for homotopy theories}

\centerline{(Talk given by Boris Chorny)}

\vskip .1 in

Take your favorite mathematical objects.  If morphisms satisfy associativity and identity morphisms exist, then we have a category $\mathcal C$.  We can take some collection $S$ of morphisms of $\mathcal C$, called weak equivalences, that we want to think of as isomorphisms but may not actually be so.

We can look at Gabriel-Zisman localization \cite{gz}.  Define a map
\[ \gamma \colon \mathcal C \rightarrow \mathcal C[S^{-1}] \] such that the maps in $S$ go to isomorphisms.  The category $\mathcal C[S^{-1}]$ has more maps than the original $\mathcal C$.  For example, zig-zags with backward maps in $S$ become morphisms in $\mathcal C[S^{-1}]$.  The category $\mathcal C[S^{-1}]$ is also required to satisfy the following universal property: if $\delta \colon \mathcal C \rightarrow \mathcal D$ is a functor such that $\delta(s)$ is an isomorphism in $\mathcal D$ for every map $s$ of $S$, then there is a unique map $\mathcal C[S^{-1}] \rightarrow \mathcal D$ such that the diagram
\[ \xymatrix{\mathcal C \ar[rr]^\gamma \ar[dr]^\delta && \mathcal C[S^{-1}] \ar[dl] \\
& \mathcal D & } \] commutes.

\begin{example}
If $\mathcal C= \mathcal Gps$ is the category of groups, we can define
\[ S=\{ G_1 \rightarrow G_2 \mid G_1/[G_1, G_1] \rightarrow G_2/[G_2, G_2] \} \] where the bracket denotes the commutator subgroup.  Then $\mathcal Gps[S^{-1}] \simeq \mathcal Ab$, the category of abelian groups.
\end{example}

\begin{example}
If $\mathcal C = \mathcal Top$ is the category of topological spaces, we can consider $S$ to be the class of homotopy equivalences, or $S_1$ the class of weak homotopy equivalences.  There are thus two homotopy categories to consider, $\mathcal Top[S^{-1}]$, which is too complicated, and $\mathcal Top[S_1^{-1}]$, which is usually considered as the homotopy category.
\end{example}

\begin{example}
If $\mathcal C=Ch_{\geq 0}(\mathbb Z)$ is the category of non-negatively graded chain complexes of abelian groups, let $S$ be the class of quasi-isomorphisms.  Then $Ch_{\geq 0}(\mathbb Z)[S^{-1}]= \mathcal D^{\geq 0}(\mathbb Z)$, the bounded below derived category.
\end{example}

In general, the hom-sets in $\mathcal C[S^{-1}]$ may be too big, in that they may form proper classes rather than sets.  There are three possible solutions to this difficulty.
\begin{itemize}
\item Use the universe axiom, so that everything ``large" becomes small in the next universe.  However, this process causes some changes in the original model.

\item Sometimes Ore's condition holds, so any diagram
\[ \xymatrix{ \cdotp \ar[r] \ar[d]_\sim & \cdotp \\ \cdotp & } \] can be completed to a diagram
\[ \xymatrix{ \cdotp \ar[r] \ar[d]_\sim & \cdotp \ar[d]^ \sim \\
\cdotp \ar[r] & \cdotp .} \]  For example, this condition helps in the construction of the derived category.

\item Consider a model category structure on $\mathcal C$.
\end{itemize}

To get a model category structure, take subcategories of $\mathcal C$: $\mathcal W$, the weak equivalences, $\mathcal Fib$, the fibrations, and $\mathcal Cof$, the cofibrations, each containing all isomorphisms.  This structure, defined by Quillen in 1967 \cite{quillen}, must satisfy five axioms.

These axioms actually guarantee that every two classes together determine the third.  If we have a diagram
\[ \xymatrix{A \ar[r] \ar[d]^i & C \ar[d]^p \\
B \ar[r] \ar@{-->}[ur] & D} \] then if the dotted arrow lift exists, we say that $i$ has the right lifting property with respect to $p$.  If we choose weak equivalences and one of the other classes, then the third is determined by the lifting properties given by axiom 5: $\mathcal W \cap \mathcal Cof$ has the left lifting property with respect to $\mathcal Fib$, and $\mathcal Cof$ has the left lifting property with respect to $\mathcal Fib \cap \mathcal W$.  If the fibrations and cofibrations are specified, then the weak equivalences are determined by this axiom together with the fact that any weak equivalence must be able to be factored as an acyclic cofibration (map which is a cofibration and weak equivalence) followed by an acyclic fibration.

\begin{example}
Our two choices of weak equivalences in $\mathcal Top$ give two possible model structures.  If the weak equivalences are homotopy equivalences, we can define the fibrations to be the Hurewicz fibrations and the cofibrations to be the Borsuk pairs.  The result is Str{\o}m's model structure, which is complicated \cite{strom}.

If we take the weak equivalences to be the weak homotopy equivalences, the cofibrations to be the class of CW inclusions and their retracts, and the fibrations to be Serre fibrations, then we get the standard model structure \cite{ds}.
\end{example}

\begin{example}
For $Ch_{\geq 0}(\mathbb Z)$ with weak equivalences the quasi-isomorphisms, then we can choose the cofibrations to be the injections and the fibrations to be the surjections with projective kernels.  Alternatively, one could take the same weak equivalences but choose the fibrations to be all surjections and the cofibrations to be the injections with injective cokernel \cite{ds}.
\end{example}

Models for a given homotopy theory are not necessarily unique.  For example, there is a model category structure on the category $\SSets$ of simplicial sets whose homotopy category is equivalent to $\mathcal Top[\mathcal W^{-1}]$.  There is in fact a Quillen equivalence between $\mathcal Top$ and $\SSets$: an adjunction such that the left adjoint preserves (acyclic) cofibrations and the right
adjoint preserves (acyclic) fibrations.

These techniques can be applied in other settings.

\begin{example}
In Voevodsky's work (e.g., in \cite{mv}), he uses the Yoneda embedding $Sm/\Delta \rightarrow \SSets(Sm/\Delta)^{op}$.
\end{example}

\begin{example}
The Adams spectral sequence computes stable homotopy classes of maps:
\[ Ext_\mathcal A(H^*Y, H^*X) \Rightarrow [X,Y]_{st}. \]  The Bousfield-Kan spectral sequence is an unstable version.  Farjoun and Zabrodsky generalized it to compute equivariant homotopy classes of maps \cite{fz}.  In 2004, Bousfield generalized for abstract model categories \cite{bousfield}.
\end{example}

\section{Simplicial objects and categories}

\centerline{(Talk given by Emmanuel Farjoun)}

\vskip .1 in

The goals of this talk are to explain:
\begin{itemize}
\item why simplicial objects are so often met, and

\item why work with simplicial categories and how to get them.
\end{itemize}

Recall the category ${\bf \Delta}$, which looks like
\[ \xymatrix@1{[0] \ar@<.8ex>[r] \ar@<-.8ex>[r] & [1] \ar[l] \ar@<1ex>[r] \ar[r] \ar@<-1ex>[r] & [2] \cdots. \ar@<.5ex>[l] \ar@<-.5ex>[l]} \]
The object $[n]$ is the ordered set $\{0, \ldots, n\}$ and the maps are order-preserving and satisfy simplicial relations.  We can also take the opposite category $\Deltaop$.

\begin{definition}
A \emph{simplicial object} in a category $\mathcal C$ is a functor $\Deltaop \rightarrow \mathcal C$.
\end{definition}

Historically, Eilenberg and Mac Lane were interested in continuous maps $\Delta^n \rightarrow X$ where $\Delta^n$ is the $n$-simplex in $\mathbb R^n$ and $X$ is a topological space.  But why ${\bf \Delta}$?

The slogan is that ${\bf \Delta}$ (or $\Deltaop$) is the category of endomorphisms of augmented functors.  The objects of this category are functors $a \colon X \rightarrow FX$.  We want to ``recover" $X$ out of $FX$, $F^2X$, $F^3X$, etc.  (This process could be regarded as similar to taking a field extension $k \rightarrow \overline k$ and trying to recover the original $k$.)

What are the natural transformations between powers of $X$?  For example, the two maps $FX \Rightarrow F^2X$ are given by $Fa_X$ and by $a_{FX}$.

\begin{example}
Start with the empty space $\phi$ and add a point to make it simpler: $\phi^+$.  (Now it is contractible.)  We can continue this process of adding a point to get
\[ \xymatrix@1{\phi \ar[r] & \bullet \ar@<.8ex>[r] \ar@<-.8ex>[r] & (\bullet \rightarrow \bullet) \ar[l] \ar@<1ex>[r] \ar[r] \ar@<-1ex>[r] & \Delta \cdots.\ar@<.5ex>[l] \ar@<-.5ex>[l]} \]
We also get a map $F^2X \rightarrow FX$, since FX is contractible.  Iterating, we just get the category ${\bf \Delta}$.
\end{example}

However, this process does not work for all functors.

\begin{example}
In algebra, consider a module $M$.  We get a diagram
\[ M \leftrightarrows FM \Leftarrow F^2M \cdots \] where the map $i \colon M \rightarrow FM$ sends $m \mapsto m \cdotp 1$, and hence there is also a map $Fi \colon FM \rightarrow F^2M$.  Sometimes you can recover the module $M$ from this data, but sometimes not.
\end{example}

Recall from the previous talk that a category $\mathcal C$ with weak equivalences gives rise to $\mathcal C[S^{-1}]$, its localization with respect to $S$.  The difficulty with this construction is that we don't keep the limits and colimits that we had in $\mathcal C$.  For example, suppose we had a pullback diagram in $\mathcal C$:
\[ \xymatrix{P \ar[r] \ar[d] & X \ar[d] \\
Y \ar[r] & B.} \]
Then in the homotopy category, using subscripts to denote the set of maps being inverted, the diagram
\[ \xymatrix{[W,P]_{S^{-1}} \ar[r] \ar[d] & [W,X]_{S^{-1}} \ar[d] \\
[W,Y]_{S^{-1}} \ar[r] & [W,B]_{S^{-1}} } \] is not in general a pullback for any object $W$ in $\mathcal C$.  We want to correct this construction so that it is, taking mapping spaces instead of mapping sets:
\[ \xymatrix{\Map_{S^{-1}}(W,P) \ar[r] \ar[d] & \Map_{S^{-1}}(W,X) \ar[d] \\
\Map_{S^{-1}}(W,Y) \ar[r] & \Map_{S^{-1}}(W,B)} \] which is a homotopy pullback of spaces.  Applying $\pi_0$ gives the previous diagram, but now we can see that it sits in a long exact sequence, so we can see why the above process did not work: there must be correction terms given by higher homotopy groups.

So, how do we define these mapping spaces?  The process is called \emph{simplicial localization} \cite{dksimploc}.

Given a category $\mathcal C$ and a subcategory $S$ of ``equivalences" with $\ob(S)=\ob(\mathcal C)$, we want a simplicial category $L(\mathcal C, S)$. Recall that a simplicial category is a category enriched in simplicial sets, so it has objects and, for any pair of objects $A$ and $B$, $\Map(A,B)$ is a simplicial set.  We can think of it as the ``derived functor of a non-additive functor $F$:
\[ \cdots F^3 \mathcal C \Rrightarrow F^2 \mathcal C \Rightarrow F \mathcal C \rightarrow \mathcal C \supset S. \]
If we invert $S$ at each level, we get a diagram
\[ \cdots F^3 \mathcal C[S^{-1}] \Rrightarrow F^2 \mathcal C[S^{-1}] \Rightarrow F \mathcal C[S^{-1}]. \]

\begin{theorem} \cite{dksimploc}
$\pi_0 L(\mathcal C,S) = \mathcal C[S^{-1}]$.
\end{theorem}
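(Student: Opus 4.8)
The plan is to unwind the definition of $L(\mathcal C,S)$, observe that in each simplicial degree one is merely performing Gabriel--Zisman localization on a \emph{free} category, and then exploit the fact that localization commutes with the reflexive coequalizer that presents $\mathcal C$ out of its free resolution. Concretely, set $O=\ob(\mathcal C)$ and let $F$ be the free-category monad on the category $\mathrm{Cat}_O$ of categories with object set $O$ and identity-on-objects functors; thus $F\mathcal D$ has as morphisms the composable words of morphisms of $\mathcal D$, and the monad multiplication $\mu\colon F^2\to F$ concatenates words. Since $\mathcal C$ is an $F$-algebra, with structure map $a\colon F\mathcal C\to\mathcal C$ ``compose the word'', the bar resolution $[n]\mapsto F^{n+1}\mathcal C$ is a simplicial object of $\mathrm{Cat}_O$ whose two lowest face maps are $d_0=\mu,\ d_1=Fa\colon F^2\mathcal C\rightrightarrows F\mathcal C$, and $\mathcal C$ is the coequalizer of this pair --- the standard presentation of an algebra over a monad as a quotient of free algebras --- which is reflexive via the degeneracy $s_0$. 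Applying the same resolution to $S\subseteq\mathcal C$ gives $F^{n+1}S\subseteq F^{n+1}\mathcal C$, and by definition $L(\mathcal C,S)$ is the simplicial category corresponding to the simplicial object $[n]\mapsto F^{n+1}\mathcal C[(F^{n+1}S)^{-1}]$ of $\mathrm{Cat}_O$, so that $\Map_{L(\mathcal C,S)}(A,B)_n=\Hom_{F^{n+1}\mathcal C[(F^{n+1}S)^{-1}]}(A,B)$.

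I would then record two formal facts. First: for a simplicial category arising from a simplicial object $\mathcal D_\bullet$ in $\mathrm{Cat}_O$, the category $\pi_0\mathcal D$ (same objects, $\Hom_{\pi_0\mathcal D}(A,B)=\pi_0\Map_{\mathcal D}(A,B)$) is $\mathrm{colim}_{\Deltaop}\mathcal D_\bullet$, i.e. the coequalizer of $d_0,d_1\colon\mathcal D_1\rightrightarrows\mathcal D_0$; moreover, because this pair is reflexive and composition in $\mathrm{Cat}_O$ preserves reflexive coequalizers in each variable, this coequalizer is computed on hom-sets, which is exactly why it is given levelwise by $\pi_0$ of the hom--simplicial sets. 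Second: Gabriel--Zisman localization, viewed as a functor $(\mathcal D,W)\mapsto\mathcal D[W^{-1}]$ from pairs (a category with object set $O$ together with a set $W$ of its morphisms) to $\mathrm{Cat}_O$, is left adjoint to $\mathcal E\mapsto(\mathcal E,\iso(\mathcal E))$ and hence preserves all colimits, and likewise the functor forgetting $W$ preserves colimits. Feeding the reflexive coequalizer presentation $(F^2\mathcal C,F^2S)\rightrightarrows(F\mathcal C,FS)\to(\mathcal C,S)$ through the forgetful functor recovers the presentation of $\mathcal C$ above, and through the left adjoint $(-)[-^{-1}]$ it exhibits $\mathcal C[S^{-1}]$ as the coequalizer in $\mathrm{Cat}_O$ of $F^2\mathcal C[(F^2S)^{-1}]\rightrightarrows F\mathcal C[(FS)^{-1}]$. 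By the first fact this coequalizer is precisely $\pi_0 L(\mathcal C,S)$, and a short chase matches up the two composition laws, giving the claimed equality of categories.

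The main obstacle I anticipate is not depth but care with these colimit manipulations. Colimits of categories do not in general commute with passing to hom-sets, so the argument genuinely relies on the pair $d_0,d_1$ being reflexive (which is where the degeneracy $s_0$ is used), and one must pin down the precise sense in which Gabriel--Zisman localization is a left adjoint --- equivalently, one can bypass the adjunction and verify directly that inverting the morphisms of $FS$ in the presentation $F\mathcal C\twoheadrightarrow\mathcal C$ imposes exactly the Gabriel--Zisman relations defining $\mathcal C[S^{-1}]$. Once those points are nailed down, the remainder is a diagram chase.
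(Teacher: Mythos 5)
Your argument is correct and reproduces, essentially verbatim, the argument in Dwyer--Kan (the paper gives no proof and only cites \cite{dksimploc}): the two pillars are exactly as you state---that $\pi_0$ of a simplicial category with fixed object set is $\mathrm{coeq}(\mathcal D_1 \rightrightarrows \mathcal D_0)$ in $\mathrm{Cat}_O$, computed hom-wise because reflexivity lets a generating relation $d_0 h \sim d_1 h$ be composed with $s_0(g)$ to give $g\,d_0 h \sim g\,d_1 h$ in a single step, and that dimensionwise Gabriel--Zisman localization is a left adjoint on pairs and hence carries the bar presentation $F^2\mathcal C \rightrightarrows F\mathcal C \to \mathcal C$ to a reflexive-coequalizer presentation of $\mathcal C[S^{-1}]$. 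You were right to single out reflexivity as the load-bearing hypothesis (general colimits of categories are not computed on hom-sets), and your fallback direct verification also works, since $S$ is assumed to be a subcategory so that the augmentation sends $FS$ onto $S$.
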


Besides universality, the fantastic property of this simplicial localization, or justification for it, as follows.  Suppose we start with a model category $\mathcal M$.  We can use the model structure to get $\Map(X,Y)$ using the cone or cylinder construction, for example $\Map(X,Y)_1 = \Hom_\mathcal M(I \otimes X, Y)$.  The resulting simplicial category is equivalent to $L(\mathcal M, S)$.

Universality works the same here as it does for the homotopy category: if we have a functor $T \colon (\mathcal C, S) \rightarrow \mathcal D$, where $\mathcal D$ is a simplicial category, such that $T(s)$ is an equivalence for every $s$ in $S$, then there exists a unique simplicial functor $L(\mathcal C,S) \rightarrow \mathcal D$ making the following diagram commute:
\[ \xymatrix{(\mathcal C, S) \ar[rr] \ar[dr] && L(\mathcal C,S) \ar@{-->}[dl] \\
& \mathcal D. &} \]

Under some conditions, we can equivalently write $L(\mathcal C,S)$ as the simplicial category with $L(\mathcal C,S)_0$ consisting of diagrams
\[ \xymatrix@1{A & \cdotp \ar[l]_s \ar[r]^f & \cdotp & B \ar[l]_{s'}} \] and $L(\mathcal C, S)_1$ consisting of diagrams
\[ \xymatrix{& \cdotp \ar[dl]_\sim \ar[r]^f \ar[dd]^\sim & \cdotp \ar[dd]^\sim & \\
A &&& B. \ar[ul]_\sim \ar[dl]_\sim \\
& \cdotp \ar[r]^{f'} \ar[ul]_\sim & \cdotp &} \]
This is sometimes called the \emph{hammock localization} \cite{dkfncxes}.

\section{Nerve constructions}

\begin{definition}
The \emph{nerve} of a small category $\mathcal C$ is the simplicial set $\nerve(\mathcal C)$ defined by
\[ \nerve(\mathcal C)_n = \Hom_{\mathcal Cat}([n], \mathcal C). \]
\end{definition}

In other words, the $n$-simplices of the nerve are $n$-tuples of composable morphisms in $\mathcal C$.  Recall that if $f \colon \mathcal C \rightarrow \mathcal D$ is an equivalence of categories, then $f$ induces a weak equivalence $\nerve(\mathcal C) \rightarrow \nerve(\mathcal D)$.

\begin{example}
Let $\mathcal C$ be the category with a single object and only the identity morphism, and let $\mathcal D$ be the category with two objects and a single isomorphism between them.  Then we can include $\mathcal C$ into $\mathcal D$ (in two different ways), and this functor is an equivalence of categories.  The nerves of the categories $\mathcal C$ and $\mathcal D$ are both contractible, so we get a homotopy equivalence between them.
\end{example}

However, the converse statement is not true!

\begin{example}
Let $\mathcal E$ be the category with two objects, $x$ and $y$, and a single morphism from $x$ to $y$ and no other non-identity morphisms.  Then $\nerve(\mathcal E)$ is contractible, but $\mathcal E$ is not equivalent to either $\mathcal C$ or $\mathcal D$ from the previous example.
\end{example}

The problem here is that weak equivalences of simplicial sets are given by weak homotopy equivalences of spaces after geometric realization, so we don't remember which direction a 1-simplex pointed.  In particular, we don't remember whether a 1-simplex in the nerve came from an isomorphism in the original category.  However, if we restrict to groupoids, where all morphisms are isomorphisms, then the converse statement is true.

In this talk, we'll consider two possible approaches to take after making these observations.
\begin{enumerate}
\item We'll look at how to determine whether a simplicial set is the nerve of a groupoid or, more generally, the nerve of a category.

\item We'll define a more refined version of the nerve construction so that it does distinguish isomorphisms from other morphisms.
\end{enumerate}

\subsection{Kan complexes and generalizations}

Recall some particular useful examples of simplicial sets: the $n$-simplex $\Delta[n]= \Hom(-, [n])$; its boundary $\partial \Delta[n]$, obtained by leaving out the identity map $[n] \rightarrow [n]$; and the horns $V[n,k]$, which are obtained by removing the $k$th face from $\partial \Delta[n]$.

For example, when $n=2$, we can think of $\partial \Delta[2]$ as the (not filled in) triangle
\[ \xymatrix{& v_1  \ar[dr] & \\
v_0 \ar[rr] \ar[ur] && v_2.} \]  Then the horn $V[2,0]$ looks like
\[ \xymatrix{& v_1  & \\
v_0 \ar[rr] \ar[ur] && v_2} \] whereas $V[2,1]$ looks like
\[ \xymatrix{& v_1  \ar[dr] & \\
v_0 \ar[ur] && v_2} \] and $V[2,2]$ looks like
\[ \xymatrix{& v_1  \ar[dr] & \\
v_0 \ar[rr] && v_2.} \]

\begin{definition}
A simplicial set $X$ is a \emph{Kan complex} if any map $V[n,k] \rightarrow X$ can be extended to a map $\Delta[n] \rightarrow X$.  In other words, a lift exists in any diagram
\[ \xymatrix{V[n,k] \ar[d] \ar[r] & X \\
\Delta[n] \ar@{-->}[ur] &. } \]
\end{definition}

\begin{prop}
The nerve of a groupoid is a Kan complex.
\end{prop}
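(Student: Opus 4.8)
The plan is to pass everything through the Yoneda lemma and do the horn extensions by hand, treating inner and outer horns separately. Write $\mathcal{G}$ for the groupoid. A map $\Delta[n] \to \nerve(\mathcal{G})$ is the same thing as a functor $[n] \to \mathcal{G}$, i.e.\ a string of composable morphisms $g_0 \to g_1 \to \cdots \to g_n$; write $a_i \colon g_{i-1} \to g_i$ for the generating arrows. A map $V[n,k] \to \nerve(\mathcal{G})$ is the corresponding partial datum: a compatible family of functors $[n-1] \to \mathcal{G}$, one for each face $d_j$ of $\Delta[n]$ with $j \neq k$. I must show every such family extends to a functor $[n] \to \mathcal{G}$ that restricts back to it. Since for $n \geq 2$ every vertex of $\Delta[n]$ already lies in the horn, the objects $g_0, \dots, g_n$ are prescribed, and the job is to build the morphisms $a_i$ and check consistency.

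Next I would dispose of the inner horns $V[n,k]$, $0 < k < n$. Here a short combinatorial check shows the whole spine of $\Delta[n]$ lies in the horn, together with the $2$-simplices that record the composites along it; hence the $a_i$ are read off directly, they assemble into a functor $[n] \to \mathcal{G}$, and the $2$-simplices present in the horn force this functor to restrict to the given faces. This uses nothing beyond $\mathcal{G}$ being a category — it is exactly the assertion that the nerve of any category is a quasi-category — and the filler is unique.

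The content lies in the outer horns $V[n,0]$ and $V[n,n]$, and this is the only place the groupoid hypothesis enters; I treat $V[n,0]$, the case $V[n,n]$ being formally dual. For $n = 1$, $V[1,0]$ is a single vertex $g_0$, filled by the degenerate edge $\id_{g_0}$. For $n = 2$, the horn supplies $a_1 \colon g_0 \to g_1$ and a morphism $h \colon g_0 \to g_2$ but not the edge $g_1 \to g_2$; I fill it by the $2$-simplex $g_0 \xrightarrow{a_1} g_1 \xrightarrow{h a_1^{-1}} g_2$, which is legitimate precisely because $a_1$ is invertible in $\mathcal{G}$. For $n \geq 3$ the full spine of $\Delta[n]$ already lies in $V[n,0]$, so the functor $[n] \to \mathcal{G}$ can be reconstructed from it; when $n \geq 4$ all $2$-simplices of $\Delta[n]$ survive in the horn as well (equivalently, one may invoke $2$-coskeletality of the nerve) and the reconstructed $n$-simplex automatically restricts to the horn with no further input. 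The case $n = 3$ is the delicate one: the horn supplies everything except the face $d_0$, and writing out the two factorizations it forces on the long edge $g_0 \to g_3$ produces an equation of the form $\alpha\, a_1 = \beta\, a_1$; cancelling the invertible arrow $a_1$ yields exactly the relation needed for the reconstructed $3$-simplex to restrict correctly to the remaining faces.

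I expect the main obstacle to be the combinatorial bookkeeping in that last paragraph — keeping straight which edges and $2$-simplices of $\Delta[n]$ actually survive in $V[n,k]$ (this is why $n \leq 3$ behaves differently from $n \geq 4$), and, in the $n = 3$ outer case, checking that the horn records enough $2$-simplices to exhibit the troublesome edge as a composite having an invertible factor, so that the groupoid axiom can be applied. A tidier packaging is to verify by hand only that $\nerve(\mathcal{G})$ fills the finitely many horns $V[1,0]$, $V[2,0]$, $V[3,0]$ and their duals, and then obtain every $V[n,k]$ with $n \geq 4$ from $2$-coskeletality of the nerve; on either route the single new ingredient is the invertibility of the morphisms of $\mathcal{G}$.
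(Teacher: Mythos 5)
Your proof is correct, and it takes the same basic approach as the paper---explicit horn-filling, with the groupoid hypothesis used only at outer horns---but it is substantially more complete than the paper's ``idea of proof,'' which works out only the $n=2$ cases and gestures at the rest. The two observations you add that actually close the argument are both right and worth recording: for $n\geq 4$ every $2$-simplex of $\Delta[n]$ already lies in $V[n,0]$, so $2$-coskeletality of the nerve supplies the filler with no further appeal to invertibility; and for $n=3$ the spine and all $2$-simplices except $d_0=\{1,2,3\}$ lie in the horn, so reconstructing the $3$-simplex from the spine leaves exactly one compatibility to check, which reduces (in your notation) to $b\,a_1 = a_3\,a_2\,a_1$, and cancelling the invertible $a_1$ is the last place the groupoid axiom enters. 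This cleanly isolates where groupoid-ness matters---$V[2,0]$, $V[2,2]$, $V[3,0]$, $V[3,3]$---and the $2$-coskeletality reduction for $n\geq 4$ is a genuine shortcut not present in the paper's sketch; one could equally package the whole $n\geq 3$ analysis via coskeletality, as you note at the end.
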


The idea of the proof of this proposition is as follows.  Let $G$ be a groupoid.  When $n=2$, having a lift in the diagram
\[ \xymatrix{V[2,1] \ar[d] \ar[r] & \nerve(G) \\
\Delta[2] \ar@{-->}[ur] &} \] means that any pair of composable morphisms has a composite.  In fact, such a lift will exist for the nerve of any category, not just a groupoid.

However, the 1-simplices in the image of a map $V[2,0] \rightarrow \nerve(G)$ are not composable; they have a common source instead.  Therefore, finding a lift to $\Delta[2] \rightarrow X$ could be found if we knew that one of these 1-simplices came from an invertible morphism in $G$; since $G$ is a groupoid, this is always the case.  Finding a lift for the case $V[2,2]$ is similar; here the 1-simplices have a common target.

So, we have concluded that a lift exists in a diagram
\[ \xymatrix{V[n,k] \ar[r] \ar[d] & \nerve(\mathcal C) \\
\Delta[n] \ar@{-->}[ur] & } \] for $0\leq k \leq n$ if $\mathcal C$ is a groupoid but only for $0<k<n$ if $\mathcal C$ is any category.

However, since composition in a category is unique, lifts in the above diagram will also be unique.

\begin{prop}
A Kan complex $K$ is the nerve of a groupoid if and only if the lift in each diagram
\[ \xymatrix{V[n,k] \ar[r] \ar[d] & K \\
\Delta[n] \ar@{-->}[ur] & } \] is unique for every $0 \leq k \leq n$.
\end{prop}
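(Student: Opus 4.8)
The plan is to treat the two implications separately, using throughout the explicit description of $\nerve(\mathcal C)$ whose $n$-simplices are strings of $n$ composable morphisms in $\mathcal C$.

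For the forward implication, suppose $K = \nerve(G)$ for a groupoid $G$. Existence of all horn fillers is the previous proposition, so only uniqueness is at issue. I would argue exactly along the lines of the discussion preceding the statement: a filler of a horn $V[n,k] \to \nerve(G)$ amounts to filling in the edges of the missing faces, and each such edge is forced to be a composite (possibly involving an inverse, when $k = 0$ or $k = n$) of morphisms already present along $V[n,k]$; since composites and inverses in a groupoid are unique, so is the filler. Concretely for $n = 2$ this is immediate, and for general $n$ one can either bootstrap by induction on the faces or simply note that a filler is a functor $[n] \to G$ extending a given functor defined off of $V[n,k]$, and such an extension is unique. Hence lifts are unique for every $0 \le k \le n$.

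For the converse, assume $K$ is a Kan complex in which every horn filler is unique. The first step is to extract a category $\mathcal C$: put $\ob(\mathcal C) = K_0$, take the morphisms to be the elements of $K_1$ with source and target read off from $d_1$ and $d_0$, and let the identity at $x$ be $s_0 x$. Given $f, g \in K_1$ with $d_0 f = d_1 g$, the pair assembles into a map $V[2,1] \to K$ whose unique filler $\sigma \in K_2$ satisfies $d_2 \sigma = f$ and $d_0 \sigma = g$; set $g \circ f := d_1 \sigma$. The unit axioms follow by filling degenerate horns, and associativity follows by the usual $3$-simplex argument: a triple of composable morphisms determines all but one face of a $3$-simplex, the unique filler of the resulting inner horn exists, and reading its remaining face in the two possible ways gives $(h \circ g) \circ f = h \circ (g \circ f)$. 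The second step is to identify $K$ with $\nerve(\mathcal C)$: one shows by induction on $n$ that the Segal map $K_n \to K_1 \times_{K_0} \cdots \times_{K_0} K_1$ is a bijection — this is precisely the assertion that every inner horn $V[n,k]$, $0 < k < n$, has a unique filler — and a simplicial set with bijective Segal maps is canonically the nerve of the category it determines. Finally, with $K = \nerve(\mathcal C)$ in hand, the Kan condition forces $\mathcal C$ to be a groupoid: for $f \colon x \to y$, the horn $V[2,0] \to \nerve(\mathcal C)$ with $01$-edge $f$ and $02$-edge $\id_x$ has a filler whose $12$-edge is a left inverse of $f$, and dually the corresponding $V[2,2]$-horn produces a right inverse; the two inverses coincide, so $f$ is invertible.

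I expect the main obstacle to be the middle portion of the converse: verifying associativity and then showing that the unique inner horn filling hypothesis upgrades to the Segal condition, so that $K$ is genuinely isomorphic to $\nerve(\mathcal C)$ compatibly with all faces and degeneracies. This is the classical characterization of nerves of categories among simplicial sets, and although each ingredient is routine, the compatible induction building the isomorphism $K_n \cong \Hom([n], \mathcal C)$ takes some care. The remaining pieces — the two outer-horn constructions of inverses and the uniqueness claim in the forward direction — are short.
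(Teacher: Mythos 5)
The paper does not actually supply a proof of this proposition; the notes offer only the preceding one-line remark that uniqueness of fillers in a nerve follows from uniqueness of composition, and they say nothing at all about the converse. Your proposal is correct and fills in what one would expect. In the forward direction a filler of a horn in $\nerve(G)$ is a functor $[n]\to G$ extending prescribed data, and for $n\ge 2$ either the horn already contains the spine (every inner horn, and every $V[n,k]$ with $n\ge 3$) or it misses exactly one spine edge which invertibility in $G$ then determines uniquely (the outer $V[2,0]$ and $V[2,2]$ cases). In the converse, unique inner horn filling builds the category and gives the Segal isomorphisms identifying $K$ with its nerve, while existence of fillers for the outer horns $V[2,0]$ and $V[2,2]$ manufactures left and right inverses; you never need uniqueness of the outer fillers, only their existence, which is consistent.

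Two small points are worth flagging. First, the statement as written, with $n$ unrestricted, is not literally true: for $n=1$ a lift of $V[1,0]\to\nerve(G)$ is any $1$-simplex with a prescribed target vertex, and this is never unique unless $G$ is discrete; the intended hypothesis is uniqueness for $n\ge 2$, and your proof implicitly works in that range, so it is worth making that restriction explicit. Second, the phrase that bijectivity of the Segal maps ``is precisely the assertion that every inner horn has a unique filler'' slightly overstates the matter: the two conditions are equivalent, but for $n\ge 3$ passing between maps out of $V[n,k]$ and maps out of the spine uses the lower Segal conditions, so this is an equivalence that must be proved rather than an identity of conditions. You do flag this step as the one requiring care, so the sketch is honest; just be aware the two are not the same condition on the nose.
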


Generalizing the above observation, we get the following.

\begin{definition}
A simplicial set $K$ is an \emph{inner Kan complex} or a \emph{quasi-category} if a lift exists in any diagram
\[ \xymatrix{V[n,k] \ar[d] \ar[r] & X \\
\Delta[n] \ar@{-->}[ur] & } \] for $0<k<n$.
\end{definition}

\begin{prop} \label{quasi}
A quasi-category is the nerve of a category if and only if these lifts are all unique.
\end{prop}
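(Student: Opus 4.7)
The plan is to exploit the slogan that a simplex of a nerve is determined by its spine—the sequence of consecutive $1$-simplex edges $[0,1], [1,2], \ldots, [n-1,n]$—and that every inner horn $V[n,k]$ already contains this spine. I will handle the two directions separately.

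For the forward direction, suppose $K = \nerve(\mathcal C)$. An $n$-simplex of $\nerve(\mathcal C)$ is by definition a composable string of $n$ morphisms, so it is completely determined by its spine. A quick combinatorial check shows that each spine edge $[i,i+1]$ lies in every face $d_j$ with $j \notin \{i,i+1\}$; since we only remove the single face $d_k$ with $0 < k < n$, each spine edge still lies in some remaining face, hence in the inner horn. So any $V[n,k] \to \nerve(\mathcal C)$ already determines the spine, and the filler (which exists by the quasi-category property of the nerve) is therefore unique.

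For the backward direction, suppose $K$ is a quasi-category with unique inner horn fillers. Define a category $\mathcal C$ with $\ob(\mathcal C) = K_0$, morphisms from $a$ to $b$ the 1-simplices $f \in K_1$ with $d_1 f = a$ and $d_0 f = b$, and identity of $a$ the degeneracy $s_0 a$. For composable $f \colon a \to b$ and $g \colon b \to c$, define $g \circ f = d_1 \sigma$, where $\sigma$ is the unique filler of the horn $V[2,1] \to K$ with $d_0 \sigma = g$, $d_2 \sigma = f$; uniqueness makes $\circ$ well-defined. The identity laws follow from observing that $s_1 f$ and $s_0 f$ are themselves fillers of the relevant $V[2,1]$-horns. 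For associativity, given $f,g,h$ composable, form the three faces $d_0, d_2, d_3$ of a prospective 3-simplex using the composition 2-simplices for $hg$, $(hg)f$, and $gf$; after a short compatibility check these agree on shared edges and define a map $V[3,1] \to K$, whose unique filler yields a $d_1$-face witnessing $(hg)f = h(gf)$.

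It remains to produce an isomorphism $K \cong \nerve(\mathcal C)$. Define $\phi \colon K \to \nerve(\mathcal C)$ by sending an $n$-simplex to its spine, which is a composable string in $\mathcal C$ by construction; it is routine that this is simplicial. Bijectivity in each degree reduces to the statement that every composable string of length $n$ in $K$ is the spine of a unique $n$-simplex. Both claims are proved by induction on $n$: given the string, one uses the inductive hypothesis to produce appropriate lower-dimensional simplices, glues them into an inner horn $V[n,1] \to K$, and invokes unique filling; uniqueness then follows from the fact that any two simplices with the same spine determine the same such horn.

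The main obstacle is the associativity step together with the inductive construction above—both require assembling several lower-dimensional simplices along their faces and verifying that they genuinely match on common subsimplices to yield a well-defined inner horn. Once this bookkeeping is done, unique horn filling does all the remaining work.
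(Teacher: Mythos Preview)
The paper does not actually give a proof of this proposition; these are workshop lecture notes, and the result is simply stated as a generalization of the preceding discussion about groupoids. So there is nothing to compare your argument against on the paper's side.

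Your approach is the standard one and is essentially correct. The forward direction is clean and complete as written. In the backward direction, your outline is sound: the definition of $\mathcal C$, the use of unique $V[2,1]$-filling for composition, degeneracies for identities, and $V[3,1]$-filling for associativity are exactly right, and the compatibility checks you flag really do go through. One small point worth being explicit about in the associativity step: having produced the $d_1$-face of the filled $3$-simplex, you must observe that this face is itself a filler of the $V[2,1]$-horn with edges $gf$ and $h$, and then invoke uniqueness again to identify its long edge with $h(gf)$; only then do you get $h(gf) = (hg)f$.

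For the isomorphism $K \cong \nerve(\mathcal C)$, the inductive argument you sketch works, but the bookkeeping is genuinely heavy: to assemble the faces $d_0, d_2, \ldots, d_n$ into a map $V[n,1] \to K$ you need to know that each pair agrees on their common $(n-2)$-face, and this requires the uniqueness part of the inductive hypothesis (so surjectivity and injectivity must be proved simultaneously). A cleaner route, which you might prefer when writing this up, is to prove once and for all that the spine inclusion $Sp[n] \hookrightarrow \Delta[n]$ is inner anodyne (a composite of pushouts of inner horn inclusions); unique lifting against inner horns then immediately gives unique lifting against spine inclusions, and both bijectivity claims fall out without any face-matching verification.
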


\subsection{Classifying diagrams}

Recall that a \emph{simplicial space} or \emph{bisimplicial set} is a functor $\Deltaop \rightarrow \SSets$.

\begin{definition} \cite{rezk}
The \emph{classifying diagram} $N\mathcal C$ of a category $\mathcal C$ is the simplicial space defined by
\[ (N\mathcal C)_n= \nerve(\iso(\mathcal C^{[n]})) \] where $\mathcal C^{[n]}$ denotes the category of functors $[n] \rightarrow \mathcal C$ whose objects are length $n$ chains of composable morphisms in $\mathcal C$, and where $\iso$ denotes the maximal subgroupoid functor.
\end{definition}

What does this definition mean?  When $n=0$, $(N\mathcal C)_0 = \nerve(\iso (\mathcal C))$ is the nerve of the maximal subgroupoid of $\mathcal C$.  In particular, this simplicial set only picks up information about isomorphisms in $\mathcal C$.  When $n=1$, we have $(N \mathcal C)_1= \nerve(\iso (\mathcal C^{[1]}))$.  The objects of $\iso(\mathcal C^{[1]})$ are morphisms in $\mathcal C$, and the morphisms of $\iso(\mathcal C^{[1]})$ are pairs of morphisms making the appropriate square diagram commute.  More generally, $(N \mathcal C)_{n,m}$ is the set of diagrams of the form
\[ \xymatrix{\cdotp \ar[r] \ar[d]^\cong & \cdotp \ar[r] \ar[d]^\cong & \cdots \ar[r] & \cdotp \ar[d]^\cong \\
\cdotp \ar[r] \ar[d]^\cong & \cdotp \ar[r] \ar[d]^\cong & \cdots \ar[r] & \cdotp \ar[d]^\cong \\
\vdots \ar[d]^\cong & \vdots \ar[d]^\cong & \ddots & \vdots \ar[d]^\cong \\
\cdotp \ar[r] & \cdotp \ar[r] & \cdots \ar[r] & \cdotp } \] where there are $n$ horizontal arrows in each row and $m$ vertical arrows in each column.

\begin{example}
If $\mathcal C$ is a groupoid, then up to homotopy, $N \mathcal C$ is a constant simplicial space and has the homotopy type of the nerve of $\mathcal C$.
\end{example}

\begin{example}
Let $\mathcal C$ be the category $\mathcal E$ from above.  Then $(N \mathcal C)_0$ has the homotopy type of two points and $(N \mathcal C)_1$ has the homotopy type of three points.  In particular, the two sets are not the same, and therefore the classifying diagram is not levelwise equivalent to the category with one object and one morphism, since it is a groupoid.
\end{example}

We have several nice facts about classifying diagrams of categories.
\begin{enumerate}
\item The homotopy types of the simplicial sets $(N \mathcal C)_n$ are determined by $(N \mathcal C)_0$ and $(N \mathcal C)_1$, in that
\[ (N \mathcal C)_n \simeq \underbrace{(N \mathcal C)_1 \times_{(N \mathcal C)_0} \cdots \times_{(N \mathcal C)_0} (N \mathcal C)_1}_n. \]
This idea will lead to the definition of Segal space.

\item The subspace of $(N \mathcal C)_1$ arising from isomorphisms in $\mathcal C$ is weakly equivalent to $(N \mathcal C)_0$.  This idea will lead to the definition of complete Segal space.
\end{enumerate}

At this point we can make a connection to the previous subsection.  Notice that the simplicial set $(N \mathcal C)_{\ast, 0}$ is just the nerve of $\mathcal C$.  In particular, it is a quasi-category.  This property will continue to hold for more general complete Segal spaces.

\section{Segal's approach to loop spaces}

\centerline{(Talk given by Matan Prezma)}

\vskip .1 in

We begin by recalling the definition of a loop space.

\begin{definition}
Given a space $Y$ with base point $y_0$, its \emph{loop space} is
\[ \Omega Y = \{\gamma \colon [0,1] \rightarrow Y \mid \gamma(0)=y_0 =\gamma(1) \}. \]
\end{definition}

Whenever we have loops $\alpha$ and $\beta$ in $\Omega Y$, we can compose them (via concatenation) to get a loop $\alpha \ast \beta$.  While this operation is not strictly associative, we do have that the loop $\alpha \ast (\beta \ast \gamma)$ is homotopic to the loop $(\alpha \ast \beta) \ast \gamma$.  In particular, if 1 denotes the constant loop, then for every $\alpha \in \Omega Y$, there exists a loop $\alpha^{-1}$ such that $\alpha \ast \alpha^{-1}$ is homotopic to 1.

\subsection{Work of Kan}

Consider $\Omega Y$ for a pointed connected space $Y$.  Applying the singular functor gives a simplicial set $\Sing(Y)$ which is homotopy equivalent to $\text{red}(\Sing(Y))$, a reduced simplicial set, or one with only one vertex.

For any reduced simplicial set $S$, we can apply the Kan loop group functor $G$ to get a simplicial group $GX$.  In particular, $(GS)_0=FS_1$, the free group on the set $S_1$.  (Note that the maps in this simplicial group are complicated.)

Putting these two ideas together, we can obtain that
\[ |G \text{ red}(\Sing Y)| \simeq \Omega Y. \]  In other words, the topological group on the left-hand side is a rigidification of $\Omega Y$.

\subsection{Work of Stasheff}

This work originates with the following question: When is a space $X$ homotopy equivalent to $\Omega Y$ for some space $Y$?  To answer this question, we get the idea of $A_\infty$-spaces.

The base case is where the space $X$ is equipped with a pointed map $\mu \colon W \times X \rightarrow X$, such that the base point of $X$ behaves like a unit element.  In this case we say that $X$ is an $A_2$-space.

We can then consider when this map $\mu$ comes equipped with a homotopy between $\mu \circ (1 \times \mu)$ and $\mu \circ (\mu \times 1)$, so that the diagram
\[ \xymatrix{X \times X \times X \ar[r]^-{\mu \times 1} \ar[d]_{1 \times \mu} & X \times X \ar[d]^{\mu} \\
X \times X \ar[r]^\mu & X} \] commutes up to homotopy.  In this case, $X$ is an $A_3$-space.

Now, given an $A_3$-space $X$, we could ask what the next level of associativity is, in order to define an $A_4$-space.  We would need to have a relationship between the different ways of parenthesizing four elements, say $\alpha, \beta, \gamma$, and $\delta$, so we need to consider the diagram
\[ \xymatrix{ && (\alpha \beta){\gamma \delta} \ar@{-}[drr] \ar@{-}[dll] && \\
((\alpha \beta) \gamma)\delta \ar@{-}[dr] &&&& \alpha(\beta(\gamma \delta)) \ar@{-}[dl] \\
& (\alpha(\beta \gamma))\delta \ar@{-}[rr] && \alpha((\beta \gamma)\delta). & } \]
In other words, we have a map $\mu_4 \colon S^1 \rightarrow \Map_\ast(X^4, X)$.  If $X$ is a loop space, one can see that this map is nullhomotopic.  Therefore, we say that $X$ is an $A_4$-\emph{space} if $\mu_4$ is nullhomotopic.

We can then continue, so an $A_5$-space has the map from the boundary of some 3-dimensional diagram to the space nullhomotopic, and we can define similarly in higher dimensions.  An $A_\infty$-space is an $A_n$-space for all $n$.

\begin{theorem}
If $X$ is a CW complex, then $X \simeq \Omega Y$ for some $Y$ if and only if:
\begin{enumerate}
\item $\pi_0 X$ is a group, and

\item $X$ is an $A_\infty$-space.
\end{enumerate}
\end{theorem}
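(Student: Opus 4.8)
The plan is to prove the two implications separately. \emph{The easy direction.} Suppose $X \simeq \Omega Y$. Then $\pi_0 X \cong \pi_1(Y, y_0)$, which is a group. To see that $X$ is an $A_\infty$-space, the cleanest route is to replace $\Omega Y$ by the weakly equivalent \emph{Moore loop space} $\Omega' Y$ of paths $\gamma \colon [0,r] \to Y$ of arbitrary length $r \geq 0$ with $\gamma(0) = \gamma(r) = y_0$: on $\Omega' Y$ concatenation is a \emph{strictly} associative and unital multiplication, and a strict topological monoid is automatically an $A_\infty$-space, since each $\mu_n \colon K_n \times X^n \to X$ may be taken constant in the $K_n$-direction, equal to the iterated product $X^n \to X$. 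Transporting this structure back along $\Omega Y \simeq \Omega' Y$ puts an $A_\infty$-structure on $X$. (One can also argue directly on $\Omega Y$: a point of the associahedron $K_n$ records a subdivision of $[0,1]$ into $n$ subintervals, hence a reparametrized concatenation of $n$ loops, and the face identifications of $K_n$ are witnessed by the canonical homotopies between reparametrizations of a path, which always exist in a loop space.)

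\emph{The hard direction.} Given an $A_\infty$-space $X$ with $\pi_0 X$ a group, we must produce a space $Y$ with $\Omega Y \simeq X$; we will take $Y = BX$, a \emph{classifying space} manufactured from the $A_\infty$-data. Following Stasheff, construct $BX$ as a filtered union $BX = \bigcup_n P_n$ with $P_1 = \ast$, where $P_n$ is obtained from $P_{n-1}$ by attaching $K_{n+1} \times X^n$ along a map assembled from the structure maps $\mu_k$ ($2 \leq k \leq n$), the projections, and the degeneracies forced by the basepoint of $X$; in parallel build a space $EX = \bigcup_n E_n$ carrying a compatible right $X$-action, with $EX$ contractible because it is an increasing union of cones, and a projection $p \colon EX \to BX$ whose fibers are copies of $X$. (Alternatively one may first \emph{rectify}: the $A_\infty$-operad is equivalent to the associative operad, so $X$ is equivalent through $A_\infty$-maps to an honest topological monoid $M$, e.g.\ via a two-sided bar construction, and then $BX = BM = |[n] \mapsto M^n|$ and $EX = |[n] \mapsto M^{n+1}|$ are the usual simplicial constructions with $p$ the evident projection.) In either model there is a natural map $\Sigma X \to BX$ onto the bottom stratum, whose adjoint $\eta \colon X \to \Omega BX$ is the map we must show is a weak equivalence.

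\emph{The main obstacle.} Everything reduces to showing that $p \colon EX \to BX$ is a \emph{quasi-fibration}. Granting that, the long exact sequence of $p$ together with the contractibility of $EX$ yields $\pi_k X \cong \pi_{k+1}(BX) \cong \pi_k(\Omega BX)$ for all $k$, and an inspection identifies this composite with $\eta_\ast$; hence $\eta$ is a weak equivalence, and since $X$ is a CW complex (and $\Omega BX$ has the homotopy type of one) it is a genuine homotopy equivalence, so $Y = BX$ works. The hypothesis that $\pi_0 X$ is a group is precisely what makes $p$ a quasi-fibration: checking the Dold--Thom conditions over the successive stages $P_n$ of $BX$ comes down, after unwinding the identifications, to the \emph{shearing map} $X \times X \to X \times X$, $(a,b) \mapsto (a, \mu_2(a,b))$, being a homotopy equivalence, which holds exactly when $\pi_0 X$ is a group. (Without this hypothesis one gets only that $\eta$ is a group completion, inducing an isomorphism $H_\ast(X)[(\pi_0 X)^{-1}] \cong H_\ast(\Omega BX)$; the group condition is what renders the localization trivial.) I therefore expect this quasi-fibration and shearing-map step, rather than the lengthy but essentially formal combinatorics of the associahedra or the rectification to a monoid, to be the genuine difficulty.
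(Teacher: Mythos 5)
The paper states this theorem without proof: it appears in the survey section ``Work of Stasheff'' in the talk on Segal's approach to loop spaces, as background for the recognition principle, and no argument is given. Your sketch is a correct outline of Stasheff's original proof, so there is no approach to compare it against in the text itself.

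That said, a few remarks on the sketch. The easy direction is fine, including the acknowledged subtlety that transferring the strict monoid structure on the Moore loops back to $\Omega Y$ uses invariance of $A_\infty$-structures under homotopy equivalence of CW complexes (Boardman--Vogt). For the hard direction you have correctly located the genuine difficulty: building $(EX, BX)$ from the associahedra is long but formal, and everything hinges on $p\colon EX \to BX$ being a quasi-fibration, which by the Dold--Thom criteria over the filtration reduces to the shearing map $(a,b)\mapsto (a,\mu_2(a,b))$ being a weak equivalence --- exactly the content of the hypothesis that $\pi_0 X$ is a group. Your closing remark on group completion when that hypothesis is dropped is also accurate and provides good context. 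The only cosmetic imprecision is in the indexing of the filtration $P_n$ (in Stasheff's convention the first nontrivial stage is $\Sigma X$, coming from $K_2 = \ast$), but this does not affect the structure of the argument.
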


\subsection{Work of Segal}

Assume that $X$ is a homotopy associative $H$-space (so that it is $A_3$).  Then $X$ is a monoid in $\Ho(\mathcal Top)$, the homotopy category of spaces.  Thus, the nerve of $X$ is a simplicial object in $\Ho(\mathcal Top)$, with $\nerve(X)_1=X$ and $\nerve(X)_2=X \times X$.  So, we get a functor $\Deltaop \rightarrow \Ho(\mathcal Top)$.

If a lift exists in the diagram
\[ \xymatrix{& \mathcal Top \ar[d] \\
\Deltaop \ar@{-->}[ur] \ar[r] & \Ho(\mathcal Top)} \] making it commutative up to natural isomorphism, then the space $X$ has a classifying space.  In other words, $X \simeq \Omega Y \simeq G$ with $BG \simeq Y$.

Consider the maps in ${\bf \Delta}$ of the form $\alpha^i \colon [1] \rightarrow [n]$ given by $0 \mapsto i-1$ and $1 \mapsto i$.

\begin{theorem}
Let $X$ be a CW complex.  Then $X \simeq \Omega Y$ if and only if
\begin{enumerate}
\item $\pi_0(X)$ is a group, and

\item there exists a simplicial space $A$ such that
\begin{enumroman}
\item $A_0=\ast$,

\item $A_1 \simeq X$, and

\item for every $n \geq 2$ the map $(\alpha^1)_\ast \times \cdots \times (\alpha^n)_\ast \colon A_n \rightarrow (A_1)^n$ is a homotopy equivalence.
\end{enumroman}
\end{enumerate}
In addition, $\Omega |A| \simeq X$ and $|A| \simeq Y$.
\end{theorem}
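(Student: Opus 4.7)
The plan is to treat the two implications separately. The forward direction amounts to a rigidification: I would replace $\Omega Y$ by a strict topological monoid and take its nerve. The backward direction is a delooping argument: given $A$, I would set $Y = |A|$ and must show that the natural map $A_1 \to \Omega|A|$ is a weak equivalence, which is where the group-like hypothesis on $\pi_0$ becomes essential.

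For $(\Rightarrow)$, assume $X \simeq \Omega Y$. Condition $(1)$ is immediate since $\pi_0(\Omega Y) = \pi_1(Y)$ is a group. For $(2)$, replace $\Omega Y$ by the Moore loop space $\Omega' Y$ consisting of pairs $(r,\gamma)$ with $r \geq 0$ and $\gamma \colon [0,r] \to Y$ based at $y_0$; concatenation makes $\Omega' Y$ a strictly associative topological monoid with unit the constant loop of length $0$, and the natural comparison $\Omega Y \to \Omega' Y$ is a weak equivalence. Let $A$ be the nerve of this monoid, so $A_0 = \ast$, $A_n = (\Omega' Y)^n$, with face and degeneracy maps coming from the multiplication and the unit. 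Conditions (i)--(iii) then hold on the nose (the Segal maps are literal identities), and the classical identification $|A| = B\Omega' Y \simeq Y$ for connected $Y$ closes out this direction.

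For $(\Leftarrow)$, suppose $A$ satisfies (i)--(iii) and $\pi_0(A_1)$ is a group; set $Y = |A|$. The natural map $A_1 \to \Omega|A|$ sends $a \in A_1$ to the loop in $|A|$ obtained by viewing $a$ as a $1$-simplex $[0,1] \times \{a\} \to |A|$ whose endpoints collapse to the image of $A_0 = \ast$. Following Segal, I would introduce the d\'ecalage $PA$ with $(PA)_n = A_{n+1}$, retaining the face maps $d_0, \ldots, d_n$ and degeneracies $s_0, \ldots, s_n$, while using $d_{n+1}$ to assemble a map of simplicial spaces $PA \to A$. The extra degeneracy $s_{n+1}$ makes $|PA|$ contractible, and the strict fiber of $PA \to A$ at the basepoint is exactly $A_1$. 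It then suffices to show that $|PA| \to |A|$ is a quasi-fibration, so that the strict fiber computes $\Omega|A|$.

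The main obstacle is this quasi-fibration step. Verifying Segal's gluing criterion along the skeletal filtration of $|A|$ reduces, via the Segal equivalences $A_n \simeq A_1^n$, to checking that attaching an $n$-cell acts on the fiber over the basepoint by a self-map of $A_1$ homotopic to multiplication by an element of $\pi_0(A_1)$. These attaching self-maps are weak equivalences precisely when every element of $\pi_0(A_1)$ is invertible, which is the hypothesis in $(1)$; without it, one would obtain only the group completion $\Omega|A| \simeq (A_1)^{\mathrm{gp}}$, matching Segal's more general statement. Granting the quasi-fibration, the long exact sequence of homotopy groups together with the contractibility of $|PA|$ yields $A_1 \simeq \Omega|A|$, hence $X \simeq \Omega Y$ with $Y = |A|$, as desired.
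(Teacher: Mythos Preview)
The paper does not actually prove this theorem; it is stated without proof in the section summarizing Segal's work on loop spaces, as a result attributed to Segal. So there is no ``paper's own proof'' to compare against.

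That said, your proposal is a faithful outline of Segal's original argument. The forward direction via Moore loops is the standard rigidification, and for the converse the d\'ecalage $PA$ with the extra degeneracy, together with the quasi-fibration criterion applied along the skeletal filtration of $|A|$, is precisely how Segal handles it. Your identification of the group-like condition on $\pi_0(A_1)$ as exactly what is needed to make the fiber-translation maps weak equivalences is correct, and your remark that dropping it yields only the group completion is the content of Segal's Proposition~1.5 rather than his delooping theorem proper. One small point: in the forward direction you invoke $|A| = B\Omega' Y \simeq Y$; this requires $Y$ connected, but since $\Omega Y$ sees only the basepoint component you may harmlessly replace $Y$ by that component, so the statement ``$|A| \simeq Y$'' should be read up to this replacement.
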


\section{Complete Segal spaces}

\centerline{(Talk given by Ilan Barnea)}

\vskip .1 in

Consider the category $\SSets^{\Deltaop}= \Sets^{\Deltaop \times \Deltaop}$ of simplicial spaces.  We can regard $\SSets$ as a subcategory of $\SSets^{\Deltaop}$, via the functor that takes a simplicial set $K$ to the constant simplicial space with $K$ at each level and all face and degeneracy maps the identity.

Recall that the category of simplicial spaces is monoidal under the cartesian product.  It also has a simplicial structure, where for a simplicial set $K$ and simplicial space $X$, we have $(K \times X)_n= K \times X_n$.  For simplicial spaces $X$ and $Y$ the simplicial set $\Map(X,Y)$ is given by the adjoint relation
\[ \Map(X,Y)_n=\Hom_{\SSets}(\Delta[n], \Map(X,Y))= \Hom_{\SSets^{\Deltaop}}(\Delta[n] \times X, Y). \]

There is another functor which takes a simplicial set $K$ to a simplicial space $F(K)$, given by $F(K)_n=K_n$.  Let $F(n)=F(\Delta[n])$.  Then $\Map(F(n),X) \cong X_n$.  We use this functor in the description of the cartesian closed structure on simplicial spaces, where
\[ (Y^X)_n= \Map(F(n), Y^X) \cong \Map(F(n) \times X, Y). \]

There is a model structure on the category of simplicial spaces where weak equivalences and cofibrations are levelwise, and fibrations have the left lifting property with respect to the maps which are both cofibrations and weak equivalences\cite[VIII, 2.4]{gj}. In this case, this model structure coincides with the Reedy model structure \cite{reedy}, \cite[15.8.7, 15.8.8]{hirsch}.

In fact, the category of simplicial spaces has the additional structure of a simplicial model category, so that if $X$ is cofibrant and $Y$ is fibrant, $\Map(X,Y)$ is the ``right" definition of mapping space, in that the mapping spaces are homotopy invariant.

Recall that if $f \colon X \rightarrow Y$ is a fibration and $Z$ is any cofibrant simplicial space, then $\Map(Z,X) \rightarrow \Map(Z,Y)$ is a fibration.  Letting $Z=F(n)$, we get that $X_n \rightarrow Y_n$ is a fibration for any $n$, and if $X$ is fibrant, it follows that each $X_n$ is a Kan complex.  Similarly, if $f \colon X \rightarrow Y$ is a cofibration and $Z$ is fibrant, then $\Map(Y,Z) \rightarrow \Map(X,Z)$ is a fibration.  Using the cofibration $F(0) \amalg F(0) \rightarrow F(1)$, we get that $d_1 \times d_0 \colon Z_1 \rightarrow Z_0 \times Z_0$ is a fibration.

\begin{definition} \cite{rezk}
A simplicial space $X$ is a \emph{Segal space} if it is Reedy fibrant and the Segal maps
\[ W_k \rightarrow \underbrace{W_1 \times_{W_0} \cdots \times_{W_0} W_1}_k \] are weak equivalences.
\end{definition}

Define the simplicial space $G(k) \subseteq F(k)$ by
\[ G(k)= \bigcup_{i=1}^k \alpha^i F(1) \subseteq F(k). \]  The Segal map in the previous definition is given by
\[ \Map(F(k), W) \rightarrow \Map(G(k),W). \]  By the above comments, this map is always a fibration; we want to require it to be a weak equivalence.  In particular, we have $\Map(F(k), W)_0 \rightarrow \Map(G(k),W)_0$ is surjective, so we can ``compose" 0-simplices of $W_1$.

For a Segal space $W$, define the \emph{objects} of $W$ to be $\ob(W)=W_{0,0}$.  Given $x,y \in \ob(W)$, define the mapping space between them to be the pullback
\[ \xymatrix{\map_W(x,y) \ar[r] \ar[d] & W_1 \ar[d] \\
\{(x,y)\} \ar[r] & W_0 \times W_0. } \]
Given an object $x$, define $\id_x= s_0(x) \in \map_W(x,x)$.  We say that $f,g \in \map_W(x,y)_0$ are \emph{homotopic} if $[f]=[g] \in \pi_0\map_W(x,y)$.

If $g \in \map_W(y,z)_0$ and $f \in \map_W(x,y)_0$, there exists $k \in W_{2,0}$ such that $k \mapsto (f,g)$ under the map $W_{2,0} \rightarrow W_{1,0} \times W_{1,0}$.  Then write $(g \circ f)_k=d_1(k)$, a \emph{composite} of $g$ and $f$ given by $k$.

We can define the homotopy category $\Ho(W)$ to have objects $W_{0,0}$ and morphisms given by
\[ \Hom_{\Ho(W)}(x,y)= \pi_0\map_W(x,y). \]
If $f \in \map_W(x,y)$ and $[f]$ is an isomorphism in $\Ho(W)$, then $f$ is a \emph{homotopy equivalence}.  Notice in particular that for any object $x$, $\id_x=s_0(x)$ is a homotopy equivalence.  Furthermore, if $[f]=[g] \in \pi_0\map_W(x,y)$, then $g$ is a homotopy equivalence if and only if $g$ is a homotopy equivalence.  Define $W_{heq} \subseteq W_1$ to be the components consisting of homotopy equivalences, and notice that $s_0 \colon W_0 \rightarrow W_{heq}$.

\begin{definition} \cite{rezk}
Let $W$ be a Segal space.  If $s_0 \colon W_0 \rightarrow W_{heq}$ is a weak equivalence, then $W$ is \emph{complete}.
\end{definition}

Let $\mathcal M$ be a model category and $T$ a set of maps in $\mathcal M$.  A $T$-\emph{local object} in $\mathcal M$ is a fibrant object $W$ in $\mathcal M$ such that $\Map(B,W) \rightarrow \Map(A,W)$ is a weak equivalence for every $f \colon A \rightarrow B$ in $T$.  A $T$-\emph{local equivalence} $g \colon X \rightarrow Y$ in $\mathcal M$ is a map such that $\Map(Y,W) \rightarrow \Map(X,W)$ is a weak equivalence for every $T$-local object $W$.

If $\mathcal M$ is a left proper and cellular model category \cite[13.1.1, 12.1.1]{hirsch}, then we can define a new model category structure $L_T \mathcal M$ on the same category with the same cofibrations, but with weak equivalences the $T$-local equivalences and fibrant objects the $T$-local objects.  If $f \colon X \rightarrow Y$ is a map with both $X$ and $Y$ $T$-local, then $f$ is a weak equivalence in $\mathcal M$ if and only if $f$ is a weak equivalence in $L_T \mathcal M$.  If $\mathcal M$ is a simplicial model category, then so is $L_T\mathcal M$ \cite[4.1.1]{hirsch}.

In the case of simplicial spaces, we first localize with respect to the maps $\{ \varphi_k \colon G(k) \rightarrow F(k) \}$ to get a model category whose fibrant objects are the Segal spaces.  Then consider the category $I$ with two objects and a single isomorphism between them.  Define $E=F(\nerve(I))$.  There is a map $F(1) \rightarrow E$ inducing $\Map(E,W) \rightarrow \Map(F(1), W)=W_1$ whose image is in $W_{heq}$.  In fact, $\Map(E,W) \rightarrow W_{heq}$ is a weak equivalence.  Using the map $E \rightarrow F(0)$, we get a map $W_0 \rightarrow \Map(E,W)$ which is a weak equivalence if and only if $s_0 \colon W_0 \rightarrow W_{heq}$ is a weak equivalence.  Thus, we want to localize with respect to the map $E \rightarrow F(0)$ to get a model category denoted $\css$ whose fibrant objects are the complete Segal spaces.  It follows that a map $f \colon X \rightarrow Y$ of complete Segal spaces is a weak equivalence in $\css$ if and only if $f$ is a levelwise weak equivalences of simplicial sets.

\section{Segal categories and comparisons}

We begin with the model structure on simplicial categories.  Recall that, given a simplicial category $\mathcal C$, we can apply $\pi_0$ to the mapping spaces to obtain an ordinary category $\pi_0 \mathcal C$, called the \emph{category of components} of $\mathcal C$.

\begin{theorem} \cite{simpcat}
There is a model structure on the category of small simplicial categories such that the weak equivalences are the simplicial functors $f \colon \mathcal C \rightarrow \mathcal D$ such that for any objects $x$ and $y$ the map $\Map_\mathcal C(x, y) \rightarrow \Map_\mathcal D(fx, fy)$ is a weak equivalence of simplicial sets, and the functor $\pi_0f \colon \pi_0 \mathcal C \rightarrow \mathcal D$ is an equivalence of categories.  The fibrant objects are the simplicial categories whose mapping spaces are Kan complexes.
\end{theorem}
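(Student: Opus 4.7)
My plan is to verify the model category axioms directly, defining the cofibrations and trivial cofibrations via small generating sets and invoking the small object argument. Let $U \colon \SSets \to \mathcal{SC}$ be the functor sending a simplicial set $K$ to the simplicial category with two objects $x, y$, morphism space $\Map(x,y) = K$, and only identities elsewhere. I would take the generating cofibrations to be
\[ I = \{U(\partial \Delta[n]) \hookrightarrow U(\Delta[n]) \mid n \geq 0\} \cup \{\emptyset \to \{x\}\}, \]
and the generating trivial cofibrations to be $J = \{U(V[n,k]) \hookrightarrow U(\Delta[n]) \mid n \geq 1,\ 0 \leq k \leq n\} \cup J'$, where $J'$ is a small set of inclusions $\{x\} \hookrightarrow \mathcal H$ from a one-object category into carefully chosen simplicial categories $\mathcal H$ in which every morphism is a homotopy equivalence. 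The purpose of $J'$ is to force maps having the right lifting property against $J$ to satisfy an isomorphism-lifting condition, mirroring the essential surjectivity clause in the definition of a Dwyer--Kan equivalence.

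With these definitions in hand, I would first verify the formal axioms. Dwyer--Kan equivalences satisfy 2-out-of-3 and are closed under retracts; both reduce to the analogous properties for weak equivalences of simplicial sets together with the analogous properties for equivalences of ordinary categories after applying $\pi_0$. Since the category $\mathcal{SC}$ of small simplicial categories is locally presentable, the small object argument applies to both $I$ and $J$, producing functorial $(I\text{-cof}, I\text{-inj})$ and $(J\text{-cof}, J\text{-inj})$ factorizations. I would then identify $I\text{-inj}$ with the class of trivial fibrations and $J\text{-inj}$ with the class of fibrations; the fact that a $J$-injective object has Kan complex mapping spaces is immediate from the lifting problem against $U(V[n,k]) \hookrightarrow U(\Delta[n])$, since the two objects in $U(V[n,k])$ pin down a single mapping space and the horn extension problem there is exactly the Kan extension condition on that simplicial set.

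The main obstacle is the acyclicity statement: every map in $J\text{-cof}$ must be a cofibration and a Dwyer--Kan equivalence. Pushouts in $\mathcal{SC}$ are genuinely subtle, because pushing out along a simplicial-category map creates new composites between morphisms of the two summands, so the mapping space of a pushout is not itself a pushout of simplicial sets but a geometric realization built from alternating words of morphisms. To control this I would analyze a single pushout of a map in $J$ along an arbitrary map. For a horn-inclusion generator $U(V[n,k]) \hookrightarrow U(\Delta[n])$, the word decomposition together with the fact that $V[n,k] \hookrightarrow \Delta[n]$ is an anodyne extension shows that each mapping space of the pushout changes only by an anodyne extension, so the induced map is a local weak equivalence and a bijection on objects, hence a Dwyer--Kan equivalence. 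For the maps in $J'$, I must verify that formally adjoining an invertible morphism leaves the homotopy type of every preexisting mapping space unchanged and creates in $\pi_0$ exactly the isomorphisms demanded by essential surjectivity, but no more. Closing up under transfinite composition and retracts then places $J\text{-cof}$ inside the intersection of cofibrations and Dwyer--Kan equivalences, after which the remaining retract and lifting axioms follow formally, completing the construction of the model structure.
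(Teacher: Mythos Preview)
The paper does not prove this theorem: it is stated with a citation to \cite{simpcat} and used as input for the comparison with Segal categories. There is therefore no ``paper's own proof'' to compare your proposal against.

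That said, your outline is essentially the strategy of the cited paper: one produces a cofibrantly generated model structure by specifying generating sets $I$ and $J$ exactly of the shape you describe (free-arrow categories on boundary and horn inclusions, together with a set of inclusions $\{x\}\hookrightarrow\mathcal H$ into contractible ``interval'' simplicial categories encoding homotopy equivalences), and then checks Kan's recognition theorem for cofibrantly generated model categories. Your identification of the hard step is also correct: everything reduces to showing that pushouts along maps in $J$ are Dwyer--Kan equivalences.

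Where your sketch falls short of a proof is precisely at that step. The sentence ``the word decomposition together with the fact that $V[n,k]\hookrightarrow\Delta[n]$ is anodyne shows that each mapping space of the pushout changes only by an anodyne extension'' is the whole theorem, not a remark. The mapping spaces in a pushout of simplicial categories are colimits of iterated, non-constant diagrams of products of mapping spaces (the ``alternating words'' you mention), and extracting a filtration whose associated graded pieces are visibly anodyne requires a careful combinatorial argument; in \cite{simpcat} this is handled by first reducing to the fixed-object-set case and then invoking a filtration argument of Schwede--Shipley type for monoids/algebras. Likewise, for the generators in $J'$ you assert that adjoining an invertible morphism ``leaves the homotopy type of every preexisting mapping space unchanged,'' but you give no mechanism for this; in the source this again relies on a delicate choice of the categories $\mathcal H$ (they must be cofibrant with contractible mapping spaces) together with the same pushout analysis. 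Your plan is the right plan, but the acyclicity paragraph would need to be expanded into the substantive technical core rather than asserted.
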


These weak equivalences are often called \emph{Dwyer-Kan equivalences}, since they were first defined by Dwyer and Kan in their investigation of simplicial categories.

The idea behind Segal categories is that they are an ``intermediate" version between simplicial categories and complete Segal spaces.  They are like nerves of simplicial categories but with composition only defined up to homotopy.  They are Segal spaces but with a ``discreteness" condition in place of ``completeness."

\begin{definition}
A \emph{Segal precategory} is a simplicial space $X$ such that $X_0$ is a discrete simplicial set.  It is a \emph{Segal category} if the Segal maps
\[ \varphi_k \colon X_k \rightarrow \underbrace{X_1 \times_{X_0} \cdots \times_{X_0} X_1}_k \] are weak equivalences of simplicial sets for every $k \geq 2$.
\end{definition}

Let $\SSets^{\Deltaop}_{disc}$ denote the category of Segal precategories.  There is an inclusion functor
\[ I \colon \SSets^{\Deltaop}_{disc} \rightarrow \SSets^{\Deltaop}. \]

However, unlike for $\SSets^{\Deltaop}$, there is no model structure with levelwise weak equivalences and cofibrations monomorphisms.  For example, the map of doubly constant simplicial spaces $\Delta[0] \amalg \Delta[0] \rightarrow \Delta[0]$ cannot possibly be factored as a cofibration followed by an acyclic fibration.  Therefore, we cannot obtain our model structure for Segal categories by localizing such a model structure, as we did for Segal spaces.

However, given a Segal precategory, we can use the inclusion functor to think of it as an object in the Segal space model category and localize it to obtain a Segal space.  Generally, this procedure will not result in a Segal category, since it won't preserve discreteness of the 0-space.  However, one can define a modification of this localization that does result in a Segal space which is also a (Reedy fibrant) Segal category.  Given a Segal precategory $X$, we denote this ``localization" $LX$.  Notice that it can be defined functorially.

Since $LX$ is a Segal space, it has objects (given by $X_0$, since it is discrete) and mapping spaces $\map_{LX}(x,y)$ for any $x,y \in X_0$.  It also has a homotopy category $\Ho(LX)$.

\begin{definition}
A map $f \colon W \rightarrow Z$ of Segal spaces is a \emph{Dwyer-Kan equivalence} if:
\begin{enumerate}
\item $\map_W(x,y) \rightarrow \map_Z(fx,fy)$ is a weak equivalence of simplicial sets for any objects $x$ and $y$ of $W$, and

\item $\Ho(W) \rightarrow \Ho(Z)$ is an equivalence of categories.
\end{enumerate}
\end{definition}

\begin{prop} \cite{rezk}
A map $f \colon W \rightarrow Z$ of Segal spaces is a Dwyer-Kan equivalences if and only if it is a weak equivalence in $\css$.
\end{prop}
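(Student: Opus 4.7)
The plan is to reduce the question to the case of maps between complete Segal spaces, where we already know (from the construction of $\css$ as a localization) that $\css$-equivalences coincide with levelwise weak equivalences of simplicial sets. The bridge between Segal spaces and complete Segal spaces is Rezk's completion functor: to any Segal space $W$ one associates a complete Segal space $\widehat W$ together with a natural map $W \to \widehat W$ that is a $\css$-equivalence. I would take this completion construction as given (it is in \cite{rezk}) and use it on both $W$ and $Z$ to obtain a commutative square with $W \to \widehat W$ and $Z \to \widehat Z$ the completion maps. Then by the two-out-of-three property, $f$ is a $\css$-equivalence if and only if $\widehat f$ is, and since $\widehat W$ and $\widehat Z$ are fibrant in $\css$, this happens if and only if $\widehat f$ is a levelwise weak equivalence.

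The second ingredient I need is that completion preserves the Dwyer-Kan structure in a precise sense: the induced maps $\map_W(x,y) \to \map_{\widehat W}(x,y)$ are weak equivalences and $\Ho(W) \to \Ho(\widehat W)$ is an equivalence of categories. Granting this, the proposition reduces to the assertion that for a map $\widehat f \colon \widehat W \to \widehat Z$ of complete Segal spaces, being a Dwyer-Kan equivalence is equivalent to being a levelwise weak equivalence.

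For the easy direction of this last equivalence, a levelwise weak equivalence induces weak equivalences on the pullback squares defining the mapping spaces and on $\pi_0$ of those mapping spaces, hence is a Dwyer-Kan equivalence regardless of completeness. The harder direction uses completeness essentially: if $\widehat f$ is a Dwyer-Kan equivalence, I would first show that $\widehat f_0 \colon \widehat W_0 \to \widehat Z_0$ is a weak equivalence, using that in a complete Segal space the 0-space is equivalent (via $s_0$) to the space $W_{heq}$ of homotopy equivalences, and that the set of components of $W_{heq}$ is exactly the set of isomorphism classes of $\Ho(W)$; this lets the equivalence of homotopy categories and the mapping-space equivalences be upgraded to an equivalence on $W_0$. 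Then $\widehat f_1 \colon \widehat W_1 \to \widehat Z_1$ is a weak equivalence because it fibers over $\widehat W_0 \times \widehat W_0$ with fibers the mapping spaces, and finally the Segal condition $\widehat W_n \simeq \widehat W_1 \times_{\widehat W_0} \cdots \times_{\widehat W_0} \widehat W_1$ (a homotopy limit, since $\widehat W$ is Reedy fibrant and $d_1 \times d_0$ is a fibration) propagates the equivalence to all levels.

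The main obstacle is precisely this last step, namely showing that for complete Segal spaces the Dwyer-Kan equivalences coincide with the levelwise equivalences; everything else is formal manipulation with the completion functor and two-out-of-three. The key technical input is the completeness condition $s_0 \colon W_0 \xrightarrow{\sim} W_{heq}$, which is what allows one to recover $\widehat W_0$ from the homotopy category plus the mapping-space data and thereby detect equivalences of the 0-spaces from the Dwyer-Kan data.
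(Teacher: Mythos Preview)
The paper does not actually prove this proposition; it is stated with a bare citation to \cite{rezk} and no argument is given. So there is no ``paper's own proof'' to compare against. Your outline is a faithful reconstruction of Rezk's argument (Theorem 7.7 and \S14 of \cite{rezk}): reduce to complete Segal spaces via the completion functor, use two-out-of-three, and then for complete Segal spaces argue that Dwyer--Kan equivalences coincide with levelwise equivalences by exploiting $s_0 \colon W_0 \xrightarrow{\sim} W_{heq}$ to control level~0, the fibration $d_1 \times d_0$ to control level~1, and the Segal condition to propagate.

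One caution about logical order: you invoke as input that the completion map $W \to \widehat W$ is itself a Dwyer--Kan equivalence. In Rezk's paper this is not a trivial preliminary but rather one of the substantive results (his \S14), and its proof is intertwined with the very statement you are proving. If you are writing this up from scratch rather than citing Rezk, you would need to establish the ``complete Segal space'' case (Dwyer--Kan $\Leftrightarrow$ levelwise) directly first, and only then use it to analyze the completion map; otherwise the argument is circular. As a sketch that cites Rezk for the completion's properties, your proposal is fine.
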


\begin{definition} \cite{rezk}
A map $f \colon X \rightarrow Y$ of Segal precategories is a \emph{Dwyer-Kan equivalence} if the induced map of Segal categories (Segal spaces) $LX \rightarrow LY$ is a Dwyer-Kan equivalence in the sense of the previous definition.
\end{definition}

\begin{theorem} \cite{thesis}, \cite{reedyfib}
There is a model structure $\Secat_c$ on the category of Segal precategories such that
\begin{enumerate}
\item the weak equivalences are the Dwyer-Kan equivalences,

\item the cofibrations are the monomorphisms (so that every object is cofibrant), and

\item the fibrant objects are the Reedy fibrant Segal categories
\end{enumerate}
\end{theorem}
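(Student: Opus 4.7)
The plan is to construct $\Secat_c$ as a combinatorial model category by verifying J.~Smith's recognition criteria, using the Segalification functor $L$ to transfer the homotopical content of the Segal space model structure onto the category of Segal precategories. The underlying category $\SSets^{\Deltaop}_{disc}$ is locally presentable, being cut out in $\SSets^{\Deltaop}$ by the limit condition that $X_{0,n} \to X_{0,0}$ be an isomorphism for every $n$. Take $\mathcal W$ to be the class of Dwyer-Kan equivalences as defined above. Since $L$ is functorial and DK equivalences of Segal spaces are defined in terms of homotopy categories and mapping spaces (both of which satisfy two-out-of-three and are closed under retracts), the class $\mathcal W$ inherits these closure properties.

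For the generating cofibrations, take a set $I$ consisting of the boundary inclusions $\partial F(n) \to F(n)$ for $n \geq 1$, modified to lie in $\SSets^{\Deltaop}_{disc}$ by discretizing the zero-simplices when necessary, together with $\emptyset \to F(0)$. The cofibrant closure of $I$ then consists of all monomorphisms of Segal precategories. For the generating acyclic cofibrations $J$, I would take two families: first, pushout-product maps enforcing Reedy fibrancy on each $X_n$ for $n \geq 1$; second, pushout-products of the Segal maps $\varphi_k \colon G(k) \to F(k)$ with simplicial boundary inclusions, which together enforce the Segal condition. An object of $\SSets^{\Deltaop}_{disc}$ has the right lifting property against $J$ precisely when it is Reedy fibrant and satisfies the Segal condition, that is, when it is a Reedy fibrant Segal category, giving the claimed description of fibrant objects.

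The main obstacle is verifying that the pushout closure of $J$ lies in $\mathcal W$, since this is exactly the property that fails for the naive levelwise model structure (as illustrated by the map $\Delta[0] \amalg \Delta[0] \to \Delta[0]$, which cannot be factored as a monomorphism followed by a levelwise acyclic fibration inside $\SSets^{\Deltaop}_{disc}$). The workaround is twofold: first, using DK equivalences in place of levelwise equivalences provides the necessary flexibility on the discrete zero-space; second, one compares pushouts in $\SSets^{\Deltaop}_{disc}$ with pushouts in $\SSets^{\Deltaop}$, where the former differ from the latter only by a discretization of the resulting $0$-space that is absorbed by $L$ without affecting the DK equivalence class. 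Once this compatibility is established, Smith's theorem produces the model structure and the fibrant object characterization is immediate; the full argument is carried out in \cite{thesis} and \cite{reedyfib}.
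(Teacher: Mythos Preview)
The paper itself gives no proof of this theorem; it simply states the result and cites \cite{thesis} and \cite{reedyfib}, so there is nothing in these notes to compare your argument against directly.

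That said, a few comments on your sketch relative to what actually happens in those references. First, you invoke Smith's recognition theorem but then proceed to specify a set $J$ of generating acyclic cofibrations explicitly; Smith's theorem takes only $I$ and $\mathcal W$ as input and produces $J$ for you, so you are really describing a hybrid of Smith's approach and the more classical Kan/Hovey recognition principle. More substantively, the argument in \cite{thesis} does not run through Smith's theorem: it builds the model structure by hand in the spirit of Hirschowitz--Simpson and Pelissier, first establishing a fixed-object-set version and then assembling these into the full structure. The identification of the fibrant objects as Reedy fibrant Segal categories is the content of \cite{reedyfib} and requires separate work beyond the existence of the model structure.

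Your proposal is not wrong in spirit---one can likely push a Smith-style argument through---but the step you flag as the ``main obstacle'' (closure of $J$-cell under $\mathcal W$) is genuinely the entire content of the theorem, and your two-sentence workaround does not address it. In particular, the claim that discretization of the $0$-space after a pushout is ``absorbed by $L$ without affecting the DK equivalence class'' is exactly what needs proof and is delicate: pushouts in $\SSets^{\Deltaop}_{disc}$ can identify objects in ways that change mapping spaces, and controlling this is where the real work lies. As written, your proposal is a plausible outline that defers the hard step back to the same references the paper cites.
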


We also want a model structure that looks more like a localization of the projective model structure.

\begin{theorem} \cite{thesis}, \cite{reedyfib}
There is a model structure $\Secat_f$ on the category of Segal precategories such that
\begin{enumerate}
\item the weak equivalences are the Dwyer-Kan equivalences, and

\item the fibrant objects are the projective fibrant Segal categories.
\end{enumerate}
\end{theorem}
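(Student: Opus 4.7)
The plan is to construct $\Secat_f$ as a Bousfield-style localization of an auxiliary ``projective-like'' model structure on the category of Segal precategories, in parallel to the construction of $\css$ from the Reedy structure on simplicial spaces. The guiding principle is that the identity functor $\Secat_c \to \Secat_f$ should be a left Quillen equivalence, so $\Secat_f$ has the same weak equivalences as $\Secat_c$ but fewer cofibrations (hence more fibrations, and a larger class of fibrant objects, namely the projective rather than Reedy fibrant Segal categories).

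First I would construct an auxiliary model structure on $\SSets^{\Deltaop}_{disc}$ whose fibrant objects are Segal precategories $X$ with each $X_n$ a Kan complex, using a carefully chosen set of generating (acyclic) cofibrations. The natural candidates are the maps $\partial \Delta[n] \times F(k) \to \Delta[n] \times F(k)$ and $V[n,\ell] \times F(k) \to \Delta[n] \times F(k)$ for $k \geq 1$, supplemented by specific maps at level $k=0$ chosen so that all pushouts stay inside $\SSets^{\Deltaop}_{disc}$. I would then verify the model category axioms via Kan's recognition theorem, using that $\SSets^{\Deltaop}_{disc}$ is locally presentable.

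Next I would Bousfield-localize with respect to the set of Segal maps $\{\varphi_k \colon G(k) \to F(k) \mid k \geq 2\}$ using Hirschhorn's (or Smith's) existence theorem, after verifying left properness and cellularity of the auxiliary structure. The fibrant objects of the localization are then precisely fibrant objects of the auxiliary structure that are also local with respect to the $\varphi_k$, i.e.\ projective fibrant Segal categories. To identify the weak equivalences as Dwyer-Kan equivalences, one notes that local equivalences between local objects are the levelwise weak equivalences, and for a general map $f \colon X \to Y$, one applies the functorial localization $L$ from Segal precategories to Segal spaces: $f$ is a local equivalence iff $Lf$ is a levelwise weak equivalence of Segal spaces, which by the previous Rezk proposition is equivalent to $Lf$ being a DK equivalence, and hence to $f$ being a DK equivalence by definition.

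The hard part will be constructing the auxiliary model structure. As already noted in the text, the category of Segal precategories does not inherit a projective model structure from $\SSets^{\Deltaop}$ in the naive way: the map $\Delta[0] \amalg \Delta[0] \to \Delta[0]$ of doubly constant simplicial spaces cannot be factored as a cofibration followed by an acyclic fibration when the generating sets are imported verbatim from $\SSets^{\Deltaop}$. The fix is to modify the generating cofibrations at level $0$ (using discrete replacements in place of $\partial \Delta[n] \times F(0) \to \Delta[n] \times F(0)$ for $n \geq 1$) and to check that the small object argument still produces factorizations landing in Segal precategories. Once this bookkeeping is in place, the localization step and the identification of weak equivalences are routine adaptations of the arguments already used for $\css$ and $\Secat_c$.
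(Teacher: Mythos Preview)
The paper itself gives no proof of this theorem; it merely cites \cite{thesis} and \cite{reedyfib}. In those references the model structure $\Secat_f$ is built directly: one writes down explicit generating cofibrations and generating acyclic cofibrations (already incorporating the Segal condition) and verifies the recognition theorem by hand. Your two-step plan---first build an auxiliary projective-type model structure on $\SSets^{\Deltaop}_{disc}$ with levelwise weak equivalences, then Bousfield-localize at the Segal maps---is a different route. It is a natural idea and would, if carried out, yield the same model structure; the identification of the local equivalences with Dwyer--Kan equivalences via the functorial $L$ and Rezk's proposition is fine.

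There is, however, a genuine gap in your plan for the auxiliary structure. You propose the maps $\partial\Delta[n]\times F(k)\to\Delta[n]\times F(k)$ and $V[n,\ell]\times F(k)\to\Delta[n]\times F(k)$ for $k\ge 1$ as generating (acyclic) cofibrations, reserving modifications only ``at level $k=0$''. But these objects are not Segal precategories for any $k$: since $(K\times F(k))_0 = K\times F(k)_0 = K\times\{0,\dots,k\}$, taking $K=\Delta[n]$ with $n\ge1$ gives a disjoint union of $(k+1)$ copies of $\Delta[n]$ at degree~$0$, which is not discrete. The failure of discreteness at degree~$0$ is present for \emph{every} $k$, because every $F(k)$ has nonempty degree-zero part; it is not a phenomenon localized at $k=0$ as you suggest. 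The fix in the cited papers is to replace each such generator by its ``reduction'', the pushout collapsing the degree-$0$ space to its set of components, and this has to be done uniformly across all $k$. Once you make that correction you will also find that left properness and the small-object/cellularity hypotheses for the auxiliary structure require real work (pushouts along the reduced generators interact with the discreteness constraint in nontrivial ways), so the ``bookkeeping'' you anticipate is more substantial than a tweak at $k=0$.
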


We begin by looking at the comparison between Segal categories and complete Segal spaces.

\begin{prop} \cite{thesis}
The inclusion functor $I \colon \Secat_c \rightarrow \css$ has a right adjoint $R \colon \css \rightarrow \Secat_c$.
\end{prop}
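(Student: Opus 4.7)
The plan is to exhibit $R$ as a coreflection: for each simplicial space $Y$, take $RY$ to be the largest sub-bisimplicial set of $Y$ whose $0$th row is discrete. Concretely, let $Y_0^\flat$ denote the set $Y_{0,0}$ regarded as a constant (hence discrete) simplicial set, with its canonical inclusion $Y_0^\flat \hookrightarrow Y_0$ sending $y \in Y_{0,0}$ to its iterated degeneracy at each level. For each $n$, let $v^* \colon Y_n \to Y_0$ denote the face map induced by the $v$th vertex $[0] \to [n]$, and define $(RY)_n$ by the pullback of simplicial sets
\[ \xymatrix{(RY)_n \ar[r] \ar[d] & Y_n \ar[d]^-{\prod_v v^*} \\ \prod_v Y_0^\flat \ar[r] & \prod_v Y_0,} \]
the products running over $v = 0, \ldots, n$. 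First I would check that the $(RY)_n$ assemble into a simplicial object in $\SSets$: any horizontal simplicial operator $\alpha \colon [n'] \to [n]$ sends a vertex $v'$ of $[n']$ to the vertex $\alpha(v')$ of $[n]$, so the vertex-constancy condition defining $(RY)_n$ is preserved under $\alpha^*$. By construction $(RY)_0 = Y_0^\flat$ is discrete, so $RY$ lies in $\Secat_c$.

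Next I would verify the adjunction $\Hom_{\Secat_c}(X, RY) \cong \Hom_{\css}(IX, Y)$. Because $IRY \hookrightarrow Y$ is a monomorphism, uniqueness of any factorization $IX \to IRY \hookrightarrow Y$ is immediate. For existence, given $f \colon IX \to Y$, a bisimplex $\sigma \in X_{n,m}$, and a vertex $v$ of $[n]$, the element $v^*(\sigma) \in X_{0,m}$ equals an iterated degeneracy $s_0^m(x)$ of some $x \in X_{0,0}$, because $X_0$ is discrete; naturality of $f$ then gives $v^*(f(\sigma)) = f(s_0^m(x)) = s_0^m(f_{0,0}(x))$, which lies in $Y_0^\flat \subseteq Y_{0,m}$. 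Hence $f(\sigma) \in (RY)_{n,m}$ for every $\sigma$, so $f$ factors through $IRY$.

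There is essentially no serious obstacle: the construction is forced once one observes that $\Secat_c$ is a full subcategory of $\SSets^{\Deltaop}$ cut out by a condition on the $0$-row that is stable under subobjects, so its inclusion automatically admits a right adjoint given by the maximal subobject satisfying the condition. The only step requiring any real care is bookkeeping that the vertex-constancy condition is closed under all face and degeneracy operators of the bisimplicial structure, which reduces to the observation above together with the simplicial identities governing the vertical structure of $Y_0$.
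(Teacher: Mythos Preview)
Your construction is correct and is exactly the one the paper gives: the paper packages your levelwise pullback $(RY)_n = Y_n \times_{\prod_v Y_0} \prod_v Y_0^\flat$ as a single pullback $RW = W \times_{\cosk_0(W_0)} \cosk_0(W_{0,0})$ in simplicial spaces, which unwinds to precisely your diagram. You actually supply more detail than the paper does in checking the universal property, but the underlying idea is identical.
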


We can define the functor $R$ as follows.  Suppose that $W$ is a simplicial space.  Define $RW$ to be the pullback
\[ \xymatrix{RW \ar[r] \ar[d] & \cosk_0(W_{0,0}) \ar[d] \\
W \ar[r] & \cosk_0(W_0).} \]  For example, at level 1 we get
\[ \xymatrix{(RW)_1 \ar[r] \ar[d] & W_{0,0} \times W_{0,0} \ar[d] \\
W_1 \ar[r] & W_0 \times W_0. } \]
If $W$ is a complete Segal space, then $RW$ is a Segal category; for example at level 2 we get
\[ \xymatrix{(RW)_1 \times_{(RW)_0} (RW)_1 \ar[r] \ar[d] & W_{0,0} \times W_{0,0} \times W_{0,0} \ar[d] \\
W_1 \times_{W_0} \times W_1 \ar[r] & W_0 \times W_0 \times W_0.} \]

\begin{theorem} \cite{thesis}
The adjoint pair $I \colon \Secat_c \rightleftarrows \css \colon R$ is a Quillen equivalence.
\end{theorem}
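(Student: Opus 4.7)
My plan is to verify the Quillen equivalence via the standard criterion, which simplifies here because every object of $\Secat_c$ is cofibrant: establish (i) $(I,R)$ is a Quillen pair, (ii) the counit $\epsilon_W : IRW \to W$ is a $\css$-equivalence for every fibrant $W$, and (iii) $I$ reflects weak equivalences.

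\emph{For (i).} Cofibrations in both $\Secat_c$ and $\css$ are monomorphisms of simplicial spaces, so $I$ preserves them. Since every object of $\Secat_c$ is cofibrant, it then suffices to check that $I$ sends weak equivalences to weak equivalences. If $f \colon X \to Y$ is a Dwyer--Kan equivalence in $\Secat_c$, then $LX \to LY$ is a Dwyer--Kan equivalence of Segal spaces, hence a $\css$-equivalence by the earlier proposition. The canonical maps $IX \to LX$ and $IY \to LY$ are $\css$-equivalences by construction of $L$, so two-out-of-three gives the claim for $If$.

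\emph{For (ii).} Given a complete Segal space $W$, evaluating the defining pullback levelwise yields
\[ (RW)_k = W_k \times_{W_0^{k+1}} W_{0,0}^{k+1}. \]
Since $W$ is Reedy fibrant, the evaluation $W_k \to W_0^{k+1}$ is a Kan fibration, so these are homotopy pullbacks and $RW$ inherits Reedy fibrancy as well as the Segal condition from $W$. Thus $IRW$ is a Segal space with discrete 0-space $W_{0,0}$, i.e.\ a Segal category. For objects $x,y \in W_{0,0}$ the induced map $\map_{IRW}(x,y) \to \map_W(x,y)$ is an isomorphism, because both sides compute as $W_1 \times_{W_0 \times W_0} \{(x,y)\}$. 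Hence $\epsilon_W$ is fully faithful on mapping spaces, and it is the identity on the common object set $W_{0,0}$, so the induced functor of homotopy categories is an isomorphism. Thus $\epsilon_W$ is a Dwyer--Kan equivalence of Segal spaces, and by the cited proposition it is a $\css$-equivalence.

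\emph{For (iii).} Suppose $If \colon IX \to IY$ is a $\css$-equivalence. Two-out-of-three with the $\css$-equivalences $IX \to LX$ and $IY \to LY$ shows that $LX \to LY$ is a $\css$-equivalence; since $LX$ and $LY$ are Segal spaces, this is a Dwyer--Kan equivalence, which by definition says that $f$ is a weak equivalence in $\Secat_c$.

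\emph{Expected main obstacle.} The delicate point is (ii): justifying that the levelwise pullbacks defining $RW$ are genuine homotopy pullbacks and that the result is a Reedy fibrant Segal space. This rests on the structural fact that for Reedy fibrant $W$ the evaluation $W_k \to W_0^{k+1}$ is an honest fibration---equivalently, that $W \to \cosk_0 W_0$ is a Reedy fibration. Once that is in hand, the inheritance of the Segal condition and the mapping-space identification are formal, and the remaining two-out-of-three arguments essentially mechanize the reduction to the fact that Dwyer--Kan equivalences between Segal spaces coincide with $\css$-equivalences.
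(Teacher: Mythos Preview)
Your proposal is correct and follows essentially the same strategy as the paper's (very brief) ``Idea of proof'': establish the Quillen pair using that cofibrations are monomorphisms on both sides and that weak equivalences in $\Secat_c$ are defined via Dwyer--Kan equivalences, and then reduce the Quillen equivalence to showing that for a complete Segal space $W$ the map $RW \to W$ is a Dwyer--Kan equivalence of Segal spaces. You have simply supplied the details the paper omits---the explicit mapping-space computation in (ii) and the reflection-of-equivalences argument in (iii)---and correctly identified the one genuinely technical point (Reedy fibrancy of $RW$ via the fibration $W_k \to W_0^{k+1}$).
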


\begin{proof}[Idea of proof]
Showing that this adjoint pair is a Quillen pair is not hard, since cofibrations are exactly monomorphisms in each model category, and the weak equivalences in $\Secat_c$ are defined in terms of Dwyer-Kan equivalences in $\css$.  To prove that it is a Quillen equivalence, need to show that if $W$ is a complete Segal space, the map $RW \rightarrow W$ is a Dwyer-Kan equivalence of Segal spaces.
\end{proof}

Now we consider the comparison with simplicial categories.  We have the simplicial nerve functor $N \colon \mathcal{SC} \rightarrow \Secat_f$.

\begin{prop} \cite{thesis}
The simplicial nerve functor $N$ has a left adjoint $F \colon \Secat_f \rightarrow \mathcal{SC}$.
\end{prop}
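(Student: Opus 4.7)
The plan is to construct $F$ by left Kan extension from a small family of generating Segal precategories on which the adjunction relation forces its value, and then extend by colimits to all of $\Secat_f$.

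For the generators, take for each $n, m \geq 0$ the Segal precategory $P(n,m)$ obtained as the $\Secat$-reflection of $F(n) \times \Delta[m]$ (i.e., discretize its $0$-space, which collapses $[n] \times \Delta[m]_0$ to the set of path components $[n]$). By the universal property of this reflection, for any simplicial category $\mathcal{D}$,
\[ \Hom_{\Secat}(P(n,m), N\mathcal{D}) \;\cong\; \Hom_{\SSets^{\Deltaop}}(F(n) \times \Delta[m], N\mathcal{D}) \;\cong\; (N\mathcal{D})_{n,m}, \]
and this set unpacks to a tuple of objects $d_0, \dots, d_n$ of $\mathcal{D}$ together with an $m$-simplex in each mapping space $\Map_\mathcal{D}(d_{i-1}, d_i)$. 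The same set is corepresented on the simplicial-category side by the simplicial category $\mathcal{F}(n,m)$ with objects $\{0, \dots, n\}$ and $\Map(i,j) = \Delta[m]^{j-i}$ for $i \leq j$, composition by juxtaposition; a direct check gives
\[ \Hom_{\mathcal{SC}}(\mathcal{F}(n,m), \mathcal{D}) \;\cong\; \Hom_{\Secat}(P(n,m), N\mathcal{D}). \]
I would then set $F(P(n,m)) := \mathcal{F}(n,m)$.

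Next, I would express an arbitrary Segal precategory $X$ as the canonical colimit in $\Secat$ indexed by its category of elements of maps $P(n,m) \to X$ (the coYoneda formula in $\SSets^{\Deltaop}$ post-composed with the $\Secat$-reflection), and define $F(X) := \operatorname{colim}_{P(n,m) \to X} \mathcal{F}(n,m)$ in $\mathcal{SC}$. Functoriality of $F$ and the adjunction isomorphism $\Hom_{\mathcal{SC}}(F(X), \mathcal{D}) \cong \Hom_{\Secat}(X, N\mathcal{D})$ would then fall out by threading the generator-level bijection through the universal property of the colimit, using that $\Hom(-, \mathcal{D})$ converts colimits to limits.

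The hard part will be the discreteness constraint on $X_0$: colimits in $\Secat$ are not those of $\SSets^{\Deltaop}$, because the inclusion $I$ admits a left adjoint that discretizes the $0$-space, so one must confirm that the canonical colimit presentation of $X$ really uses only the generators $P(n,m)$ (whose $0$-space is already discrete) rather than the naive $F(n) \times \Delta[m]$, whose $0$-space is not discrete unless $m = 0$. A cleaner but less informative alternative would be to invoke the adjoint functor theorem for locally presentable categories: both $\Secat_f$ and $\mathcal{SC}$ are locally presentable, and a direct computation with the coproduct-of-products formula for $(N\mathcal{D})_{n,m}$ shows that $N$ preserves small limits, yielding $F$ on abstract grounds without producing a formula for it.
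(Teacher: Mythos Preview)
Your argument is correct, but it takes a different route from the paper's. The paper's idea is structural rather than constructive: it identifies the nerves of simplicial categories as precisely the \emph{strictly local} Segal precategories, meaning those for which the Segal maps are actual isomorphisms rather than merely weak equivalences. One then invokes a general rigidification result (proved in \cite{multisort}) asserting that the inclusion of strictly local objects into all objects admits a left adjoint. This viewpoint places the adjunction inside the framework of multi-sorted algebraic theories, which is exactly the machinery used in the subsequent proof that $(F,N)$ is a Quillen equivalence (first in the fixed-object-set case, then in general).

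Your approach, by contrast, builds $F$ directly by left Kan extension from explicit generators, or alternatively appeals to the adjoint functor theorem. This is more elementary and yields a concrete value of $F$ on the generators $P(n,m)$, but it does not by itself set up the algebraic-theory perspective exploited later. Your worry about colimits in $\Secat$ versus $\SSets^{\Deltaop}$ is not actually a difficulty: the discretization functor $L \colon \SSets^{\Deltaop} \to \Secat$ is a left adjoint and hence preserves colimits, and since the inclusion $I$ is fully faithful one has $LI \cong \id$; thus any Segal precategory $X$ satisfies $X \cong L(IX) \cong L(\operatorname{colim} F(n)\times\Delta[m]) \cong \operatorname{colim} P(n,m)$, with the same indexing category as the coYoneda presentation of $IX$, since $\Hom_{\Secat}(P(n,m),X) \cong \Hom_{\SSets^{\Deltaop}}(F(n)\times\Delta[m], IX)$.
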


\begin{proof}[Idea of proof]
Segal categories, at least fibrant ones, are local objects in the Segal space model structure (or here, the analogue of the Segal space model structure but where we localize the projective model structure on simplicial spaces, rather than the Reedy structure).  By definition, the Segal maps for a Segal category are weak equivalences.  Nerves of simplicial categories are ``strictly local" objects, in that the Segal maps are actually isomorphisms.  A more general result can be proved \cite{multisort} about the existence of a left adjoint functor to the inclusion functor of strictly local objects into the category of all objects, i.e., a rigidification functor.
\end{proof}

\begin{theorem} \cite{thesis}
The adjoint pair $N \colon \mathcal{SC} \leftrightarrows \Secat_f$ is a Quillen equivalence.
\end{theorem}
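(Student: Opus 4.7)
The plan is to verify that $(F, N)$ is a Quillen pair and then that the derived counit and unit are Dwyer-Kan equivalences. For the Quillen pair, the key observation is that for any simplicial category $\mathcal{C}$ the Segal maps of $N\mathcal{C}$ are literal isomorphisms — nerves are strictly composable — so $N\mathcal{C}$ is automatically a Segal category, and if $\mathcal{C}$'s mapping spaces are Kan complexes then each $(N\mathcal{C})_n$ is a Kan complex, making $N\mathcal{C}$ projectively fibrant. Combined with the fact that $N$ preserves Dwyer-Kan equivalences by construction (since $\map_{N\mathcal{C}}(x,y) = \Map_{\mathcal{C}}(x,y)$ and $\Ho(N\mathcal{C}) = \pi_0 \mathcal{C}$) and is a right adjoint, a standard argument shows it sends fibrations between fibrant objects to fibrations in $\Secat_f$ and trivial fibrations to trivial fibrations.

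For the Quillen equivalence I would first dispatch the counit. For any simplicial category $\mathcal{C}$, the nerve $N\mathcal{C}$ is a \emph{strictly} local object with respect to the Segal maps $\varphi_k$; moreover $N$ is fully faithful, since a simplicial functor $\mathcal{C} \to \mathcal{D}$ is precisely a map $N\mathcal{C} \to N\mathcal{D}$ of strictly composable Segal spaces (the constraint $d_1$ at level $2$ encodes functoriality of composition). Because $F$ is defined as the left adjoint to the inclusion of strictly local objects via the rigidification result \cite{multisort}, its restriction to strict objects is inverse to $N$, so the counit $FN\mathcal{C} \to \mathcal{C}$ is an isomorphism of simplicial categories, and in particular a Dwyer-Kan equivalence.

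The unit is the substantive direction: for a cofibrant Segal precategory $X$ one must show $X \to NFX$ is a Dwyer-Kan equivalence. After replacing $X$ by a cofibrant approximation that actually satisfies the Segal condition (possible by first taking a fibrant replacement in $\Secat_c$ and comparing DK-equivalences across the two model structures), the essential surjectivity on homotopy categories is immediate because $F$ preserves the object set $X_0 = (NFX)_0$, and the comparison reduces to identifying the mapping space $\map_X(x,y) \simeq X_1 \times_{X_0} \cdots \times_{X_0} X_1$ with $\Map_{FX}(x,y) = \map_{NFX}(x,y)$.

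The main obstacle is precisely this mapping-space comparison, because $F$ is defined abstractly as a left adjoint and admits no direct formula for $\Map_{FX}(x,y)$. I would handle this either by producing, on a sufficiently nice cofibrant resolution of $X$, an explicit presentation of $FX$ as a free simplicial category on the $1$-simplices modulo relations encoding the Segal structure, and then comparing level by level using cofibrancy to convert ordinary pullbacks into homotopy pullbacks; or by routing through the complete Segal space model, composing with the previously established Quillen equivalence $\Secat_c \rightleftarrows \css$ and appealing to the mapping-space calculations of \cite{rezk} together with the Joyal--Tierney and Lurie/Dugger--Spivak comparisons of $\mathcal{SC}$ with $\Qcat$ and $\css$ to verify that the square commutes up to DK-equivalence on mapping spaces.
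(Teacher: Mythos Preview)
Your outline is correct in structure --- Quillen pair, easy counit via full faithfulness of $N$ on strictly local objects, hard unit via mapping-space comparison --- and you correctly isolate the unit $X \to NFX$ as the substantive obstacle. But your two proposed resolutions diverge from the paper's method and each has a problem. The paper's argument (from \cite{thesis}) proceeds by first restricting to the \emph{fixed object set} case, where both sides become categories of algebras over a multi-sorted simplicial algebraic theory in the sense of \cite{multisort}; the comparison of mapping spaces then reduces to a rigidification statement for homotopy algebras over such theories, proved in \cite{simpmon}, and one then assembles the variable-object case from this. You never mention algebraic theories, and your first alternative --- an explicit free presentation of $FX$ --- is precisely what the abstract adjoint construction does not provide, so this route is a restatement of the difficulty rather than a solution. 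Your second alternative, routing through $\css$ and $\Qcat$ via Joyal--Tierney and Lurie/Dugger--Spivak, would at best show that $\mathcal{SC}$ and $\Secat_f$ are connected by \emph{some} zig-zag of Quillen equivalences, not that the specific adjunction $(F,N)$ is one; and in the historical logic of \cite{thesis} those comparisons were not available as inputs, so invoking them here is circular relative to the paper's development.
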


This result is much harder to prove than the previous comparison.  We need the model structure $\Secat_f$ here since it is more easily compared to $\mathcal{SC}$.  For example, in both cases the fibrant objects have mapping spaces which are Kan complexes, and not all monomorphisms are cofibrations.  The Quillen equivalence was proved first in the fixed object case, where methods of algebraic theories can be applied \cite{simpmon}.  then we can generalize to the more general case as stated in the theorem.

Hence, we get a chain of Quillen equivalences
\[ \mathcal{SC} \leftrightarrows \Secat_f \rightleftarrows \Secat_c \rightleftarrows \css. \]

\section{Quasi-categories}

\centerline{(Talk given by Yonatan Harpaz)}

\vskip .1 in

Consider topological spaces up to weak homotopy equivalence, i.e., $f \colon X \rightarrow Y$ which induces $\pi_n(X) \cong \pi_n(Y)$ for all $n$.  J.H.C.\ Whitehead proved:
\begin{enumerate}
\item Every space is weakly equivalent to a CW complex.

\item If two CW complexes are weakly homotopy equivalent, then they are actually homotopy equivalent.
\end{enumerate}

Between simplicial sets and topological spaces, there is a geometric realization functor $|-| \colon \SSets \rightarrow \mathcal Top$.

Recall in a model category that an object $X$ is \emph{fibrant} if the map $X \rightarrow \ast$ is a fibration.  It is \emph{cofibrant} if $\phi \rightarrow X$ is a cofibration.

In a model category, we want to define what we mean by ``$X \times I$" in order to define a ``homotopy" given by $X \times I \rightarrow Y$.

Define a map $S \rightarrow T$ of simplicial sets to be a \emph{weak equivalence} if its geometric realization is a weak homotopy equivalence of spaces.  In fact, we have an adjoint pair
\[ |-| \colon \SSets \leftrightarrows \mathcal Top \colon Sing. \]  Therefore, given a simplicial set $S$ we have a map $S \rightarrow Sing(|S|)$, and given a topological space $X$ we have a map $|Sing(X)| \rightarrow X$, and these are weak equivalences for $S$ and $T$ sufficiently nice.  The left adjoint preserves (acyclic) cofibrations, and the right adjoint preserves (acyclic) fibrations.  The consequence of this adjointness is that Kan complexes, which can be written as $Sing(|S|)$ for some $S$, are good models for spaces.

In the world of topological categories, we say that a functor $f \colon \mathcal C \rightarrow \mathcal D$ is a \emph{weak equivalence} if it induces
\begin{enumerate}
\item $\Hom(X,Y) \rightarrow \Hom(fX, fY)$ is a weak homotopy equivalence, and

\item an equivalence on $\pi_0$.
\end{enumerate}
We can model topological categories by simplicial categories.  The fibrant simplicial categories have mapping spaces Kan complexes, it is less clear how to describe cofibrant objects.

We want to define another model: quasi-categories, which fit into a chain of adjoints
\[ \SSets \leftrightarrows \mathcal{SC}at \leftrightarrows \mathcal Top \mathcal Cat. \]  We consider the adjoint pair
\[ C^\Delta \colon \SSets \leftrightarrows \mathcal{SC}at \colon N^\Delta. \]

We first consider the discrete version: $C \colon \SSets \leftrightarrows \mathcal Cat \colon \nerve$.  Here we have $\ob(C(\Delta^n))=\{0, \cdots, n\}$ and $\Hom(i,j)=\bullet$ for $i \leq j$ and is empty otherwise.  The nerve is given by $\nerve(\mathcal C)_n= \Hom_{\mathcal Cat}(C(\Delta^n), \mathcal C)$.

In the simplicial case, we still have $\ob(C^\Delta(\Delta^n))=\{0, \ldots, n\}$, but for $i \leq j$ we now have
\[ \Hom(i,j)=(\Delta^1)^{\{i+1, \ldots, j-1\}}. \]  (Note that $(\Delta^1)^\phi = \ast$.)  Composition is given by
\[ (\Delta^1)^{\{i+1, \ldots, j-1\}} \times (\Delta^1)^{\{j+1, \ldots, k-1\}} \rightarrow (\Delta^1)^{\{i+1, \ldots, k+1\}}. \]  Similarly to before, we have $N^\Delta(\mathcal C) = \Hom_{\mathcal{SC}at}(C^\Delta(\Delta^n), \mathcal C)$.  This functor $N^\Delta$ is called the \emph{coherent nerve}.

Using these functors, we define a new model structure on the category $\SSets$.  The new version of $S \times I$ will use $I=\nerve(\bullet \rightleftarrows \bullet)=:E$.  The fibrant objects will be the simplicial sets which look like coherent nerves of fibrant simplicial categories, or quasi-categories, as given in Definition \ref{quasi}.

Suppose that $\mathcal C$ is a fibrant simplicial category.  Then there is a lift
\[ \xymatrix{V[n,k] \ar[r] \ar[d] & N^\Delta(\mathcal C) \\
\Delta^n \ar@{-->}[ur] & } \] if and only if there is a lift
\[ \xymatrix{C^\Delta(\Lambda^n_k) \ar[r] \ar[d] & \mathcal C \\
C^\Delta(\Delta^n) \ar@{-->}[ur] & .}\]

What is $C^\Delta(\Delta^n)$?  It will have the same objects as $C^\Delta(\Delta^n)$, since $V[n,k]$ has the same vertices as $\Delta^n$.  Furthermore, $\Hom(i,j)$ will be the same when $(i,j) \neq (0,n)$, but will change in this one case.  For example, when $n=2$, we get a new element in $\Hom(0,2)$.  But, the change is given by an acyclic cofibration of simplicial sets.

So, to get a lift, we know where to send objects, but we just have one mapping space missing.  But, this still works since these are Kan complexes.

\section{Comparison on quasi-categories with other models}

\centerline{(Talk given by Tomer Schlank)}

\vskip .1 in

Kan complexes have the property that
\[ |\Map_{\SSets}(X,Y)| = \Map_{\mathcal Top}(|X|, |Y|). \]

Having a lift in any diagram of the form
\[ \xymatrix{V[n,k] \ar[r] \ar[d] & X \\
\Delta^n \ar@{-->}[ur] & } \] for $0 \leq k \leq n$ encodes spaces.  If we consider only $0<k<n$ and require the lift to be unique, then such diagrams encode categories.  Combining the two, taking the existence of a lift for $0<k<n$, encodes simplicial categories.

We want a model structure on the category of simplicial sets whose fibrant objects are precisely the quasi-categories.  So, we want each $V[n,k] \rightarrow \Delta^n$ to be an acyclic cofibration.  On the other hand, when $k=0$ or $k=n$, we don't want to require it to be a weak equivalence.

\begin{definition}
If $X$ and $Y$ are simplicial sets, then a map $f \colon X \rightarrow Y$ is a \emph{categorical equivalence} if $C^\Delta(X) \rightarrow C^\Delta(Y)$ is a weak equivalence of simplicial categories.
\end{definition}

Recall that maps between model categories are pairs of functors
\[ F \colon \mathcal M_1 \rightleftarrows \mathcal M_2 \colon G \] which are adjoint, so that $\Hom(FX,Y) \cong \Hom(X,GY)$.  Furthermore, we require that $F$ preserves cofibrations and acyclic cofibrations, which is true if and only if $G$ preserves fibrations and acyclic fibrations.  Such a Quillen pair induces a map on homotopy categories $\Ho(\mathcal M_1) \rightarrow \Ho(\mathcal M_2)$ given by $X \mapsto F(X^{cof})$.  If this map is an equivalence, we have a Quillen equivalence.  Equivalently, if $X$ is fibrant in $\mathcal M_1$ and $Y$ is fibrant in $\mathcal M_2$, then the isomorphism $\Hom(FX,Y) \leftrightarrow \Hom(X,GY)$ takes weak equivalences to weak equivalences. We want the adjoint pair $C^\Delta \colon \Sets^{\Deltaop} \leftrightarrows \mathcal{SC}at \colon N^\Delta$ to be a Quillen equivalence.

Suppose we have a Quillen pair $F \colon \mathcal C \leftrightarrows \mathcal D \colon G$ with $\mathcal C$ a left proper, combinatorial model category with weak equivalences preserved under filtered colimits. Let $CF$ be a class of morphisms (in fact, cofibrations) in $\mathcal D$, and assume that $F(CF) \subseteq \text{cof}(\mathcal C)$.  Assume that $F(CF^\perp) \subseteq \text{we}(\mathcal C)$, where $CF^\perp$ denotes the class of maps with the right lifting property with respect to the maps in $CF$.  This gives a left proper, combinatorial model structure on $\mathcal D$ so that $(F,G)$ is a Quillen pair.  This method gives a model structure on $\SSets^{\Deltaop}$.

To show that $(C^\Delta, N^\Delta)$ is a Quillen equivalence, we need to show that if $X$ is (cofibrant) simplicial set and $Y$ is a fibrant simplicial category, then $C^\Delta(X) \rightarrow Y$ is a weak equivalence if and only if $X \rightarrow N^\Delta(Y)$ is a weak equivalence, if and only if $C^\Delta(X) \rightarrow C^\Delta(N^\Delta(Y))$ is a weak equivalence.  The idea is to compose this last map with the counit $C^\Delta (N^\Delta(Y)) \rightarrow Y$ and show that the counit is a weak equivalence.

\section{Applications: Homotopy-theoretic constructions and derived Hall algebras}

Having model categories whose objects are ``homotopy theories" enables us to understand the ``homotopy theory of homotopy theories."  We can compare to ad-hoc constructions already developed for model categories and verify them, then make new ones.  An example is a definition of the ``homotopy fiber product" of model categories.

\begin{definition}
Let
\[ \xymatrix@1{\mathcal M_1 \ar[r]^{F_1} & \mathcal M_3 & \mathcal M_2 \ar[l]_{F_2} } \] be a diagram of left Quillen functors of model categories.  Define their \emph{weak homotopy fiber product} $\mathcal M_w$ to have objects 5-tuples $(x_1, x_2, x_3; u,v)$ where each $x_i$ is an object of $\mathcal M_1$ and $u$ and $v$ are maps in $\mathcal M_3$ given by
\[ \xymatrix@1{F_1(x_1) \ar[r]^u & x_3 & F_2(x_2). \ar[l]_v } \]  Morphisms
\[ (x_1, x_2, x_3; u,v) \rightarrow (y_1, y_2, y_3; w,z) \] are given by triples $(f_1, f_2, f_3)$ where $f_i \colon x_i \rightarrow y_i$ and the diagram
\[ \xymatrix{F_1(x_1) \ar[r]^u \ar[d]^{F_1(f_1)} & x_3 \ar[d]^{f_3} & F_2(x_2) \ar[l]_v \ar[d]^{F_2(f_2)} \\
F_1(y_1) \ar[r]^w & y_3 & F_3(x_3) \ar[l]_z } \] commutes.  The \emph{homotopy fiber product} $\mathcal M= \mathcal M_1 \times_{\mathcal M_3} \mathcal M_2$ is the full subcategory of $\mathcal M_w$ whose objects satisfy the condition that $u$ and $v$ are weak equivalences in $\mathcal M_3$.
\end{definition}

There is a model structure on the category $\mathcal M_w$ in which weak equivalences and cofibrations are levelwise.  The subcategory $\mathcal M$ does not have a model structure, as the additional property on $u$ and $v$ is not preserved by all limits and colimits.  In some cases, however, we can localize the model structure on $\mathcal M_w$ so that the fibrant objects are in $\mathcal M$.  Otherwise, we can just think of $\mathcal M$ as a category with weak equivalences.

We want to consider this construction in the context of the homotopy theory of homotopy theories, in particular in the setting of complete Segal spaces.  To translate, we have two choices, using the functor $L_C$ that takes a model category to a complete Segal space.  We can either first apply $L_C$ to the diagram and then take the homotopy pullback of complete Segal spaces $L_C\mathcal M_1 \times{L_C \mathcal M_3} L_C \mathcal M_2$, or we can take the complete Segal space associated to the homotopy fiber product, $L_C\mathcal M$.

\begin{prop} \cite{fiberprod}
The complete Segal spaces $L_C\mathcal M$ and $L_C\mathcal M_1 \times_{L_C \mathcal M_3} L_C \mathcal M_2$ are weakly equivalent.
\end{prop}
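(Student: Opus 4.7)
The plan is to compare the two complete Segal spaces Reedy-levelwise, using an explicit model for $L_C$. For a model category $\mathcal N$, $L_C\mathcal N$ can be described, up to weak equivalence, as the simplicial space whose $n$-th level is $\nerve(\text{we}(\mathcal N^{[n]}))$, the model-categorical analogue of Rezk's classifying diagram with weak equivalences in place of isomorphisms. Between fibrant complete Segal spaces, a homotopy pullback in $\css$ agrees with the Reedy homotopy pullback (the CSS localization does not change fibrations between fibrant objects), and Reedy homotopy pullbacks in turn are computed levelwise in $\SSets$. Hence it suffices to produce, naturally in $n$, a weak equivalence of simplicial sets
\[ \nerve(\text{we}(\mathcal M^{[n]})) \;\simeq\; \nerve(\text{we}(\mathcal M_1^{[n]})) \times^h_{\nerve(\text{we}(\mathcal M_3^{[n]}))} \nerve(\text{we}(\mathcal M_2^{[n]})). \]

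For the levelwise comparison, I unpack the definition of $\mathcal M$: a functor $[n]\to\mathcal M$ consists of functors $\xi_i\colon [n]\to\mathcal M_i$ for $i=1,2,3$ together with natural weak equivalences $F_1\xi_1\xrightarrow{u}\xi_3\xleftarrow{v}F_2\xi_2$, and a morphism in $\text{we}(\mathcal M^{[n]})$ is a triple of componentwise weak equivalences compatible with $u$ and $v$. Sending such data to $(\xi_1,\xi_2)$ together with the $2$-simplex in $\nerve(\text{we}(\mathcal M_3^{[n]}))$ determined by the edges $u$ and $v$ defines the natural comparison map from the left-hand side to a strict model for the homotopy pullback on the right, obtained after replacing one of the maps $\nerve(\text{we}(F_i^{[n]}))$ by a Kan fibration via the standard path-space factorization.

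The main obstacle is to show that this comparison map is a weak equivalence, for which I would construct a homotopy inverse by rigidifying zig-zags. Given $(\xi_1,\xi_2)$ together with a zig-zag of weak equivalences in $\mathcal M_3^{[n]}$ connecting $F_1\xi_1$ and $F_2\xi_2$, iterated application of the factorization axiom in the model category $\mathcal M_3^{[n]}$ collapses the zig-zag to a single cospan $F_1\xi_1\xrightarrow{\sim}\xi_3\xleftarrow{\sim}F_2\xi_2$, which is exactly an object of $\text{we}(\mathcal M^{[n]})$; the two-out-of-three axiom then ensures that different rigidifications of the same zig-zag are connected by further weak equivalences, producing the required homotopy on nerves. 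The technical heart is making this rigidification coherent across higher simplices and natural in $n$, so as to yield an honest map of simplicial spaces and to verify that it is a levelwise equivalence; I expect this to reduce to a choice of functorial factorizations in $\mathcal M_3$ together with a standard argument that $\nerve(\text{we}(-))$ sends pullbacks of model categories along ``categorical fibrations'' to homotopy pullbacks of simplicial sets.
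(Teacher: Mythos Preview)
The paper does not actually prove this proposition; it merely states it with a citation to \cite{fiberprod} and then comments on its significance. So there is no in-paper proof to compare against.

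That said, your outline is essentially the strategy used in the cited reference: model $L_C\mathcal N$ by the classification diagram $[n]\mapsto\nerve(\text{we}(\mathcal N^{[n]}))$, observe that homotopy pullbacks of complete Segal spaces are computed levelwise, and then identify $\text{we}(\mathcal M^{[n]})$ with a homotopy pullback of the categories $\text{we}(\mathcal M_i^{[n]})$. The one place where your sketch is genuinely incomplete is the last paragraph: the ``rigidification'' of zig-zags into single cospans, made coherent across simplicial levels, is exactly the nontrivial content, and phrases like ``I expect this to reduce to'' signal that you have not yet carried it out. In practice this step is handled not by an ad hoc homotopy inverse but by invoking a general result (essentially Quillen's Theorem B, or the statement that for a suitable functor of categories-with-weak-equivalences the nerve of the fiber computes the homotopy fiber of nerves), applied to the projection $\text{we}(\mathcal M_w^{[n]})\to\text{we}(\mathcal M_1^{[n]})\times\text{we}(\mathcal M_2^{[n]})$. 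You should also be explicit that passing from the classification diagram to $L_C$ involves Reedy fibrant replacement and completion, and check that these commute with homotopy pullback; this is straightforward but should be stated.
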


This result verifies that the definition of the homotopy fiber product of model categories is in fact the correct one, from a homotopy-theoretic point of view.

We now consider an application of this construction, that of derived Hall algebras.  Let $\mathcal A$ be an abelian category with $\Hom(X,Y)$ and $\Ext^1(X,Y)$ finite for any objects $X$ and $Y$ of $\mathcal A$.

\begin{definition}
The \emph{Hall algebra} $\mathcal H(\mathcal A)$ associated to $\mathcal A$ is generated as a vector space by isomorphism classes of objects in $\mathcal A$ and has multiplication given by
\[ [X] \cdotp [Y]= \sum_{[Z]} g^Z_{X,Y} [Z] \] where the \emph{Hall numbers} are
\[ g^Z_{X,Y} = \frac{|\{0 \rightarrow X \rightarrow Z \rightarrow Y \rightarrow 0\}|}{|\Aut(X)||\Aut(Y)|}. \]
\end{definition}

As a motivating example, let $\mathfrak g$ be a Lie algebra of type $A$, $D$, or $E$.  It has an associated Dynkin diagram, for example $A_3$, which looks like $\bullet - \bullet - \bullet$.  Take a quiver $Q$ on this diagram, for example $\bullet \rightarrow \bullet \leftarrow \bullet$.  Let $\mathcal A =Rep(Q)$, the category of $\mathbb F_q$-representations of $Q$ for some finite field $\mathbb F_q$.  We get an associated Hall algebra $\mathcal H(Rep(Q))$.

However, associated to $\mathfrak g$ we also have the quantum enveloping algebra
\[ U_q(\mathfrak g)= U_q(\mathfrak n^+) \otimes U_q(\mathfrak h) \otimes U_q(\mathfrak n^-). \]  We write $U_q(\mathfrak b)= U_q(\mathfrak n^+) \otimes U_q(\mathfrak n^-)$.   Ringel proved that $U_q(\mathfrak b)$ is closely related to $\mathcal H(Rep(Q))$.  A natural question is therefore whether there is an algebra arising from $\mathcal A$ corresponding to $U_q(\mathfrak g)$ in this same way.  The conjecture is that the ``right" algebra is obtained from the root category $\mathcal D^b(\mathcal A)/T^2$, where $D^b(\mathcal A)$ is the bounded derived category, which is a triangulated category, and $T$ is its shift functor \cite{px}.

An immediate problem in attempting to find such an algebra is that the root category is triangulated rather than abelian.  Thus, the question becomes how to get a ``Hall algebra" from a triangulated category.  To\"en has developed an approach as follows.

Let $Ch(\mathbb F_q)$ be the category of chain complexes over $\mathbb F_q$.  Let $\mathcal T$ be a small dg category over $\mathbb F_q$ (or category enriched over $Ch(\mathbb F_q)$) such that for any objects $x$ and $y$ of $\mathcal T$, the complex $\mathcal T(x,y)$ is cohomologically bounded and has finite-dimensional cohomology groups.  Define $\mathcal M(\mathcal T)$ to be the category of $\mathcal T^{op}$-modules, or dg functors $\mathcal T \rightarrow Ch(\mathbb F_q)$.  This category has a model structure given by levelwise weak equivalences and fibrations, and furthermore its homotopy category $\Ho(\mathcal M(\mathcal T))$ is triangulated.   There is also a model category $\mathcal M(\mathcal T)^{[1]}$ whose objects are the morphisms in $\mathcal M(\mathcal T)$.

We have a diagram of left Quillen functors
\[ \xymatrix{\mathcal M(\mathcal T)^{[1]} \ar[r]^t \ar[d]^{s \times c} & \mathcal M(\mathcal T) \\
\mathcal M(\mathcal T) \times \mathcal M(\mathcal T) &} \] where $s$ is the source map, $t$ is the target map, and $c$ is the cone map, given by $c(x \rightarrow y)= y \amalg_x 0$, where 0 is the zero object of $\mathcal M(\mathcal T)$.

Let $\mathcal P(\mathcal T)$ be the full subcategory of $\mathcal M(\mathcal T)$ of perfect objects, which in this case consists of the dg functors whose images in $Ch(\mathbb F_q)$ are cohomologically bounded with finite-dimensional cohomology groups.  We restrict the above diagram to subcategories of weak equivalences between perfect and cofibrant objects:
\[ \xymatrix{w(\mathcal P(\mathcal T)^{[1]})^{cof} \ar[r]^t \ar[d]^{s \times c} & w\mathcal P(\mathcal T)^{cof} \\
w\mathcal P(\mathcal T)^{cof} \times w\mathcal P(\mathcal T)^{cof} &.} \]  We apply the nerve functor to this diagram and denote it as follows:
\[ \xymatrix{X^{(1)} \ar[r]^t \ar[d]^{s \times c} & X^{(0)} \\
X^{(0)} \times X^{(0)} & . } \]

Define the \emph{derived Hall algebra} $\mathcal{DH}(\mathcal T)$ as a vector space by $\mathbb Q_c(X^{(0)})$, the finitely supported functions $\pi_0(X^{(0)}) \rightarrow \mathbb Q$, and define the multiplication $\mu = t_! \circ (s \times c)^*$.  While $(s \times c)^*$ is the usual pullback, $t_!$ is a more complicated push-forward.

\begin{theorem} \cite{toendha}
\begin{enumerate}
\item $\mathcal{DH}(\mathcal T)$ is an associative, unital algebra.

\item The multiplication is given by $[x] \cdotp [y]= \sum_{[z]}g^z_{x,y}[z]$, where the \emph{derived Hall numbers} are
\[ g^z_{x,y} = \frac{|[x,z]_y| \cdotp \prod_{i>0}|x,z[-i]|^{(-1)^i}}{|\Aut(x)| \cdotp \prod_{i>0}|[x,x[-i]]|^{(-1)^i}} \] where brackets denote maps the homotopy category and $[x,z]_y$ denotes the maps $x\rightarrow z$ with cone $y$.

\item The algebra $\mathcal{DH}(\mathcal T)$ depends only on $\Ho(\mathcal M(\mathcal T))$.
\end{enumerate}
\end{theorem}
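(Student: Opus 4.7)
The plan is to view $\mu = t_! \circ (s \times c)^*$ as a push-pull operation along the span
\[ X^{(0)} \times X^{(0)} \xleftarrow{s \times c} X^{(1)} \xrightarrow{t} X^{(0)}, \]
and to reduce associativity to a Beck-Chevalley base change identity. The unit is the class of the zero object $0 \in \mathcal P(\mathcal T)$: the homotopy fiber of $s \times c$ over $\{[0]\} \times \{[y]\}$ consists of morphisms $0 \to z$ with cone $y$, which forces $z \simeq y$, so $t_!$ delivers $[0] \cdot [y] = [y]$. Dually, the cone of $x \to z$ vanishes iff $x \to z$ is a weak equivalence, giving $[x] \cdot [0] = [x]$.

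For associativity, I would introduce an auxiliary space $X^{(2)}$, the nerve of weak equivalences of cofibrant length-two filtrations $0 \to a_1 \to a_2 \to a_3$ in $\mathcal P(\mathcal T)$, equipped with $\sigma \colon X^{(2)} \to (X^{(0)})^3$ recording the successive cofibers $(a_1, a_2/a_1, a_3/a_2)$ and $\tau \colon X^{(2)} \to X^{(0)}$ recording $a_3$. The claim is that both $\mu \circ (\mu \otimes \id)$ and $\mu \circ (\id \otimes \mu)$ equal $\tau_! \circ \sigma^*$. For each parenthesization, this reduces to identifying an iterated homotopy pullback of two copies of $s \times c$ with $X^{(2)}$. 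Concretely, a cofiber sequence $a_1 \to a_2 \to y$ together with an extension $a_2 \to a_3$ of cone $z$ is, up to a contractible space of choices, the same as an extension $y \to u \to z$ together with a compatible lift $a_2 \to a_3$; this is an incarnation of the $3 \times 3$ / octahedral axiom for the stable model category $\mathcal M(\mathcal T)$. Base change along each homotopy pullback then yields the desired equality.

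The main obstacle, and the technical heart of the theorem, is the construction and Beck-Chevalley property of $t_!$ itself. Since $t$ is a map of Kan complexes with generally non-trivial higher homotopy, $t_!$ cannot be an ordinary summation: it must integrate against \emph{homotopy cardinality}, i.e., at $[z] \in \pi_0 X^{(0)}$ one sums over $\pi_0$ of the homotopy fiber, each term weighted by $\prod_{i \geq 1} |\pi_i|^{(-1)^i}$. Convergence depends crucially on the finiteness hypotheses on $\mathcal T$ (cohomologically bounded mapping complexes with finite-dimensional cohomology), which force all relevant $\pi_i$ to be finite and vanish for large $i$. Base change then holds because in a homotopy pullback the homotopy fiber of one leg is the pullback of the homotopy fiber of the other, and homotopy cardinality is multiplicative in fibration sequences with finite total homotopy.

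Given these foundations, formula (2) is obtained by unwinding definitions: $(s \times c)^*([x] \otimes [y])$ is supported on components of $X^{(1)}$ represented by morphisms with source $x$ and cone $y$, and $t_!$ evaluated at $[z]$ returns the homotopy cardinality of the space of such morphisms divided by that of the automorphism $\infty$-group of $x$. Using $\pi_i \Map(x, z) \cong [x, z[-i]]$ and the standard description of $\Aut(x)$ and its higher homotopy in terms of $\pi_i \Map(x,x)$ then expands these cardinalities into the stated alternating products. Finally, part (3) is immediate: the span $(X^{(0)}, X^{(1)}, X^{(2)})$ with its structure maps is manifestly an invariant of the underlying $(\infty,1)$-category presented by $\mathcal M(\mathcal T)$, and the derived Hall numbers are expressed entirely in terms of Hom-sets and automorphism groups in $\Ho(\mathcal M(\mathcal T))$.
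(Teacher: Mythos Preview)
Your proposal is correct and is essentially To\"en's argument. The paper itself does not prove this theorem (it is quoted from \cite{toendha}); its only remark on the proof is that ``the homotopy fiber product of model categories is the key tool in the proof that the derived Hall algebra is associative.'' That remark is exactly the model-category incarnation of your auxiliary space $X^{(2)}$: before passing to nerves of weak equivalences, the category of length-two filtrations is realized as the homotopy fiber product $\mathcal M(\mathcal T)^{[1]} \times^h_{\mathcal M(\mathcal T)} \mathcal M(\mathcal T)^{[1]}$ along $t$ and $s$, and the two ways of building this iterated pullback are what give the Beck--Chevalley squares you invoke. Your treatment of $t_!$ via homotopy cardinality, the unwinding to obtain the explicit Hall numbers, and the observation that formula~(2) is visibly an invariant of the triangulated category $\Ho(\mathcal M(\mathcal T))$ all match To\"en's paper.
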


The homotopy fiber product of model categories is the key tool in the proof that the derived Hall algebra is associative.

\begin{theorem} \cite{dhacss}
To\"en's derived Hall algebra construction can be given for a suitably finitary stable complete Segal space.
\end{theorem}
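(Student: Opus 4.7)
The plan is to transport Toën's construction verbatim, replacing each model-categorical ingredient by its intrinsic complete Segal space analogue and using the identification $L_C \mathcal M \simeq L_C \mathcal M_1 \times_{L_C \mathcal M_3} L_C \mathcal M_2$ to translate the homotopy fiber products that appear in the diagram defining the multiplication. Concretely, suppose $W$ is a complete Segal space which is stable (admits a zero object, all finite limits and colimits, and pushouts agree with pullbacks up to an appropriate shift) and finitary in the sense that the set $\pi_0 W_{0,0}$ is well-defined, each mapping space $\map_W(x,y)$ has finite-dimensional (i.e., $\mathbb F_q$-like) homotopy groups $\pi_i$ which vanish for $i \gg 0$, and analogous finiteness holds for the spaces of arrows with prescribed cofiber. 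I would take these as the definition of "suitably finitary stable."

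First, I would construct the basic diagram. Let $X^{(0)} = W_{0}$ (the moduli space of objects) and $X^{(1)} = W_{1}$ (the moduli space of arrows); source and target are the face maps $d_1, d_0 \colon W_1 \to W_0$, and the cone functor $c \colon W_1 \to W_0$ exists because $W$ is stable — one sends an arrow to its cofiber, which is functorial in $W$. This produces the analogue of Toën's diagram
\[ \xymatrix{X^{(1)} \ar[r]^t \ar[d]_{s \times c} & X^{(0)} \\ X^{(0)} \times X^{(0)} & } \]
living entirely inside $W$ rather than inside a dg model category.

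Next I would define $\mathcal{DH}(W) = \mathbb Q_c(\pi_0 X^{(0)})$ and set the multiplication $\mu = t_! \circ (s \times c)^\ast$. Here the pullback is straightforward on finitely supported functions, while the push-forward $t_!$ must be defined as a weighted sum over homotopy fibers of $t$ using the higher automorphism data; the weights are exactly the alternating products $\prod_{i>0}|[x,x[-i]]|^{(-1)^i}$ encoded in the homotopy groups of the mapping spaces, which makes sense because of the finitary hypothesis. Unit and the explicit formula for $g^z_{x,y}$ follow by unwinding definitions: the fiber of $s\times c$ over $(x,y)$ is the moduli of arrows $x \to z$ with cofiber $y$, and a careful count using homotopy cardinality reproduces Toën's formula.

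The main obstacle, as in Toën's original argument, is associativity. I would prove this by comparing two homotopy pullbacks of the basic diagram, one expressing $(a\cdot b)\cdot c$ and the other $a\cdot(b\cdot c)$, both being described by moduli spaces of length-two filtrations with prescribed subquotients. The identification of these two homotopy pullbacks is exactly where the homotopy fiber product result for complete Segal spaces intervenes: the fact that $L_C \mathcal M \simeq L_C \mathcal M_1 \times_{L_C \mathcal M_3} L_C \mathcal M_2$ guarantees that the homotopy pullback taken internally in $\css$ agrees with the moduli-theoretic fiber product, so Toën's associativity proof — which proceeds by identifying both triple products with the moduli space of 3-step filtrations — carries over intact. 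Finally, invariance under Dwyer-Kan equivalence of $W$ is automatic since the entire construction uses only $\pi_0$ and homotopy cardinalities of the spaces $W_n$, and Dwyer-Kan equivalences induce weak equivalences at each level of the underlying Segal space.
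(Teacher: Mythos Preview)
The paper does not actually contain a proof of this theorem; these are lecture notes and the result is stated with a citation to \cite{dhacss}. So there is no in-text argument to compare against directly. Your outline is the natural transport of To\"en's construction and is consistent with the surrounding discussion, which emphasizes that homotopy fiber products are the key device for associativity.

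One point deserves care. You invoke the identification $L_C\mathcal M \simeq L_C\mathcal M_1 \times_{L_C\mathcal M_3} L_C\mathcal M_2$ as the mechanism that makes the associativity argument go through. But that proposition is a statement about complete Segal spaces \emph{arising from model categories}: it says the ad hoc model-categorical homotopy fiber product agrees, after applying $L_C$, with the homotopy pullback in $\css$. For an abstract stable complete Segal space $W$ not assumed to be of the form $L_C\mathcal M$, this result does not literally apply. What you actually need for associativity is an intrinsic statement: that the two iterated homotopy pullbacks of the span $X^{(0)} \leftarrow X^{(1)} \rightarrow X^{(0)} \times X^{(0)}$ both compute the moduli of two-step filtrations in $W$, and this must be argued directly inside $\css$ using the Segal conditions and stability of $W$. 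The $L_C$-comparison then serves as a consistency check --- showing that when $W = L_C\mathcal M(\mathcal T)$ your construction recovers To\"en's --- rather than as an input to the associativity proof itself. A similar remark applies to the cone map: asserting that $c\colon W_1 \to W_0$ exists ``because $W$ is stable'' hides real work, since you need a map of spaces (functorial in equivalences of arrows), not merely an assignment on $\pi_0$. Apart from these attributions, your sketch matches the expected strategy.
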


By ``stable" here we mean that the homotopy category is triangulated.  Xiao and Xu prove that his formula (2) holds for any finitary triangulated category, but the additional homotopy-theoretic information may be useful.  Unfortunately, in either case, the root category is not ``finitary" so we cannot yet find a derived Hall algebra associated to it.

\section{Commutative algebra for quasi-categories}

\centerline{(Two talks given by Vladimir Hinich, plus some additional notes)}

\vskip .1 in

Let $k$ be a field of characteristic zero and $A$ an associative algebra over $k$.  Let $P$ be the free bimodule resolution of $A$,
\[ \xymatrix@1{\cdots \ar[r]^-d & A \otimes A \otimes A \ar[r]^-d & A \otimes A \ar[r] & A \ar[r] & 0 }\] where
\[ d(a_1 \otimes \cdots \otimes a_n)=\sum_i a_1 \otimes \cdots \otimes a_i a_{i+1} \otimes \cdots \otimes a_n. \]  We define
\[ \Hoch^*(A)=\Hom_{A \otimes A^{op}}(P,A). \]
In other words,
\[ \Hoch^n(A)=\Hom(A^{\otimes n}, A) \] for all $n \geq 0$.  The differential
\[ d \colon \Hoch^n(A) \rightarrow \Hoch^{n+1}(A) \] is given by the formula
\[ \begin{aligned}
(df)(a_1, \ldots, a_{n+1})& =a_1f(a_2, \ldots, a_{n+1}) \\ 
& = \sum_{i=1}^n(-1)^i f(a_1, \ldots, a_ia_{i+1}, \ldots, a_{n+1})-(-1)^nf(a_1, \ldots, a_n)a_{n+1} 
\end{aligned} \] for $f \in \Hom(A^{\otimes n},A)$.
The cohomology of this complex is $HH^*(A)$, the \emph{Hochschild cohomology} of $A$.

In 1963, Gerstenhaber proved that $HH^*(A)$ is commutative and $HH^*(A)[1]$ is a Lie algebra.  The two structures are compatible in the sense that an analogue of the Leibniz rule holds.  This structure is called a \emph{Gerstenhaber algebra} and is given by a Gerstenhaber operad $e_2$.  In the 1970s, F.\ Cohen considered the little cubes operad $\mathcal E(2)$ and showed that $H_*(\mathcal E(2))=e_2$.  In other words, for each $n$, $H_*(\mathcal E(2)(n))=e_2(n)$.

In 1993, Deligne asked whether this isomorphism could be lifted to an action of the $\mathcal E(2)$-operad on $\Hoch^*(A,A)$.  This conjecture was proved in the late 1990s by many different people.  The idea of the proof, as developed by Lurie in \cite{dag6}, is as follows.
\begin{enumerate}
\item The little cubes operad $\mathcal E(2)$ acts on a double loop space $\Omega^2(X)$.

\item Let $\mathcal C$ be some kind of higher category and $X$ an object of $\mathcal C$.  Then $\End(X)$ is an associative algebra (for a category) and $\End(\id_X)$ is a double loop space.

\item Look at the category with objects associative algebras over $k$ and, for any such algebras $A$ and $B$, $\Map(A,B)$ is the category of $A-B$-bimodules, with composition given by
  \[ (_AM_B, _BN_C) \mapsto M \otimes_BN. \]  The identity map is given by the bimodule $_AA_A$.  The endomorphisms of the identity map should be described by $R\Hom_{A \otimes A^{op}}(A,A)= Hoch^*(A)$.
\end{enumerate}
However, Lurie does not specify what the ``higher category" is; it is not an $(\infty, 1)$-category.

These steps given a ``moral explanation" for the Deligne Conjecture.  They acquire a precise sense in the setup of quasi-categories, and this talk is based on excerpts from two of Lurie's papers \cite{dag3}, \cite{dag6}.  In what follows, we use quasi-categorical versions of symmetric monoidal categories and of operads.  To go further into this proof, we need to understand quasi-operads and symmetric monoidal categories.

The notion of symmetric monoidal category is not very simple, even in the classical setup because of the necessity of describing the axioms of associativity ($(x \otimes y) \otimes z \cong x \otimes (y \otimes z)$) and commutativity ($x \otimes y \cong y \otimes x$).  Therefore, we start our discussion presenting a further version of the definition which itself takes care of these constraints.  This definition will be further generalized to the world of quasi-categories.

Back in the world of sets, recall the definition of a colored operad $\mathcal O$.  It has a collection of colors $[\mathcal O]$, together with $a \colon I \rightarrow [\mathcal O]$ for a finite set $I$ (the multiple inputs) and $b \in [\mathcal O]$ (the one output).  Then $\mathcal O(a,b)$ is a set, and composition is given by
\[ \mathcal O(b,c) \times \prod_{j \in J} \mathcal O(a|_{\varphi^{-1}(j)}, b(j)) \rightarrow \mathcal O(a,c) \] where we have the diagram
\[ \xymatrix{I \ar[r]^a \ar[d]^\varphi & [\mathcal O] \\
J \ar[r] \ar[ur]^b & \ast \ar[u]_c} \] satisfying associativity.

A useful exercise is to check that this definition includes the action of the symmetric groups automatically, where possible, as well as equivariance of composition.  The same kind of definition works for topological operads; in this case $\mathcal O(a,b)$ is a topological space.

Another name for colored operads is that of \emph{pseudo-tensor categories}, due to the fact that the unary operations $\mathcal O(a,b)$ with $|I|=1$ endow $\mathcal O$ with the structure of a category, whereas the other operations, if representable, define a symmetric monoidal structure.

A colored operad $\mathcal O$ has an associated category $\mathcal O_1$ with objects $[\mathcal O]$ and $\mathcal O_1(a,b)= \mathcal O(a,b)$.  Notice that a category is a special case of a colored operad, where we have only unary operations.

For any function $a \colon I \rightarrow [\mathcal O]$, we have a functor $\mathcal O_1 \rightarrow \Sets$ given by $b \mapsto \mathcal O(a,b)$.  We assume that all these functors are representable and choose the representing object $\otimes^Ia$, defined up to unique isomorphism.  It contains precisely the data of a symmetric monoidal category.

\begin{definition}
A \emph{symmetric monoidal category} is a colored operad for which all functors above are representable.
\end{definition}

\begin{example}
The terminal operad in $\Sets$ is given by $\mathcal O(a,b)= \ast$.
\end{example}

It is convenient to reformulate the notion of colored operad, as well as that of a symmetric monoidal category, so that all compositions, not just the unary operations, are part of a category structure.  This process can be done as follows.

Let $\mathcal O$ be a colored operad, and let $\Gamma$ be the category of finite pointed sets \cite{segal}.  Define a category $\mathcal O^\otimes$, together with a functor $\mathcal O^\otimes \rightarrow \Gamma$, with
\[ \ob(\mathcal O^\otimes)=\{a \colon I \rightarrow [\mathcal O]\} \] and
\[ \Hom_{\mathcal O^\otimes}(a,b)=\coprod_{\varphi \colon I_* \rightarrow J_*} \prod_j \mathcal O(a|_{\varphi^{-1}(j)}, b(j)). \] Here $I_*=I \amalg \{\ast\}$ are the objects of $\Gamma$ and $\varphi \colon I_* \rightarrow J_*$ is a map from $I$ to $J$.    Let $\langle n \rangle = \{ 1, \ldots, n\}_*$ and notice that $\mathcal O^\otimes_{\langle n \rangle} = (\mathcal O^\otimes_{\langle 1 \rangle})^n$.  The map $\mathcal O^\otimes \rightarrow \Gamma$ satisfies some properties, which will be written down precisely below in the context of quasi-categories.

\begin{definition} \cite{lurie}
A map $\varphi \colon \langle m \rangle \rightarrow \langle n \rangle$ is \emph{inert} if for every $i =1, \ldots, n$, $|\varphi^{-1}(i)|=1$.  It is \emph{semi-inert} if $|\varphi^{-1}(i)|<1$ for each $i$.
\end{definition}

\begin{definition} \cite{lurie}
Let $p \colon \mathcal C \rightarrow \mathcal D$ be a map of quasi-categories, and $\alpha \colon c \rightarrow c'$ a morphism in $\mathcal C$.  Then $\alpha$ is $p$-\emph{cocartesian} if the diagram
\[ \xymatrix{\Map_{\mathcal C}(c',c'') \ar[r] \ar[d] & \Map_\mathcal C(c, c'') \ar[d] \\
\Map_\mathcal D(pc', pc'') \ar[r] & \Map_\mathcal D(pc,pc'')} \] is homotopy cartesian.
\end{definition}

Note that the horizontal arrows are not canonically defined; however, the definition can be made precise.

\begin{definition}
Let $\mathcal O^\otimes$ be a quasi-category.  A map $p \colon \mathcal O^\otimes \rightarrow N(\Gamma)$ is a \emph{quasi-operad} if:
\begin{enumerate}
\item For any inert $\varphi \colon \langle m \rangle \rightarrow \langle n \rangle$ and any $x \in \mathcal O^\otimes_{\langle m\rangle}$, there exists $\widetilde \varphi \colon x \rightarrow y$ over $\varphi$ which is $p$-cocartesian; alternatively, for any $\varphi$ in $N(\Gamma)_1$, there is a lifting (or cocartesian fibration) $\varphi_! \colon \mathcal O^\otimes_{\langle m \rangle} \rightarrow \mathcal O^\otimes_{\langle n \rangle}$.  (In other words, for any arrow in $N(\Gamma)$ whose source lifts, the arrow can be lifted to a $p$-cocartesian arrow.)

\item If $\rho_i \colon \langle n \rangle \rightarrow \langle 1 \rangle$ is an inert map with $\rho_i(i)=1$, then
\[ \prod_i(\rho_i)_! \colon \mathcal O^\otimes_{\langle n \rangle} \rightarrow \prod_i \mathcal O^\otimes_{\langle 1 \rangle} \] is a categorical equivalence.

\item For every $\varphi \colon \langle m \rangle \rightarrow \langle n \rangle$, $c \in \mathcal O^\otimes_{\langle  m \rangle}$, and $c' \in \mathcal O^\otimes_{\langle n \rangle}$, there is an equivalence of maps in the fiber
    \[ \Map^\varphi(c,c') \rightarrow \prod\Map^{\rho_i(\varphi)}(c_!, (\rho_i)_!(c')). \]
\end{enumerate}
\end{definition}

\begin{example}
If $\mathcal O$ is a topological operad, then $Sing(\mathcal O)$ is an operad in $\SSets$ (actually in Kan complexes).  Then we get a simplicial category $Sing(\mathcal O)^\otimes$ and get a quasi-operad $N^\Delta(Sing(\mathcal O)^\otimes) \rightarrow \nerve(\Gamma)$.
\end{example}

\begin{example}
Let $\widetilde{\mathcal O}$ be a topological colored operad.  We can describe it by a functor of topological categories $\widetilde {\mathcal O}^\otimes \rightarrow \Gamma$.  Passing to singular simplices and to the nerve, we get a map $p \colon \mathcal O^\otimes \rightarrow \nerve(\Gamma)$ of quasi-categories.  Thus, any topological operad gives rise to a quasi-operad.  In particular, all operads in $\Sets$ give rise to quasi-operads.
\end{example}

\begin{example}
\begin{enumerate}
\item For the commutative operad $Com$, we get
\[ Com^\otimes=\nerve(\Gamma) \rightarrow \nerve(\Gamma), \] the identity map.

\item For the associative operad $Assoc$, we get $\ob(Assoc^\otimes)=\ob(\Gamma)$ and
\[ \Hom_{Assoc^\otimes}(\langle m \rangle, \langle n \rangle)= \{(\varphi \colon \langle m \rangle \rightarrow \langle n \rangle, \leq) \} \] where the maps $\varphi$ are in $\Gamma$ and $\leq$ denotes a total order on all preimages.  Alternatively, we could write this set as the collection of all pairs $(\varphi, \omega)$ where $\varphi$ is a morphism of $\Gamma$ and $\omega$ is a collection of total orders on $\varphi^{-1}(j)$ for each $j=1, \ldots, n$.  We will denote the corresponding operad
\[ p \colon Assoc^\otimes \rightarrow \nerve(\Gamma). \]

\item We can take the trivial operad $Triv$, and then $Triv^\otimes \subseteq \nerve(\Gamma)$ consists of the inert arrows.

\item The sub-quasi-category of semi-inert arrows gives $E[0]$, whose algebras are pointed objects.  (Classically, $\mathbb E[0]$-algebras are spaces with a marked point.)

\item More generally, $E[k]$ comes from the topological operad $\mathcal E(k)$ of little $k$-cubes.  It has objects $\{\langle n \rangle \}$.
\end{enumerate}
\end{example}

\begin{definition}
A \emph{symmetric monoidal quasi-category} is a quasi-operad $p \colon \mathcal C^\otimes \rightarrow \nerve(\Gamma)$ such that $p$ is a cocartesian fibration.
\end{definition}

We now look again at the motivating problem.  The Gerstenhaber operad appears in topology as the homology of a certain topological operad (the small cubes operad) in dimension 2.

\begin{definition}
Fix $k=0, 1, \ldots$ and let $\square^k$ be the standard $k$-cube.  Define $\mathcal E[k](n)=Rect(\square^k \times \{1, \ldots, n\}, \square^k)$, the space of rectilinear embeddings.  The latter means that the images of different copies of $\square^k$ have no intersection, and each $\square^k \rightarrow \square^k$ is the product of rectilinear maps in each coordinate.  The collection
\[ \{ \mathcal E[k](n) \mid n \geq 0\} \] forms an operad in $\mathcal Top$, the \emph{small} $k$-\emph{cubes operad}.
\end{definition}

\begin{theorem}
$Gerst(n)=H_*(\mathcal E[k](n))$.
\end{theorem}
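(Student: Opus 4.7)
The plan is to follow Cohen's original strategy: compute the homology of $\mathcal E[2](n)$ levelwise as a graded $\Sigma_n$-module, identify two natural classes corresponding to the product and the bracket, and then verify that these classes satisfy exactly the defining relations of the Gerstenhaber operad $e_2$, so that the induced map $e_2 \to H_*(\mathcal E[2])$ is both well-defined and an isomorphism.

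First I would replace $\mathcal E[2](n)$ by something homotopy-equivalent but easier to compute with. Each rectilinear embedding $\square^2 \times \{1,\ldots,n\}\hookrightarrow \square^2$ deformation retracts onto its tuple of centers, giving a $\Sigma_n$-equivariant homotopy equivalence $\mathcal E[2](n)\simeq F(\mathbb R^2,n)$, the ordered configuration space of $n$ points in the plane. Then I would compute $H_*(F(\mathbb R^2,n))$ inductively via the Fadell--Neuwirth fibration
\[ F(\mathbb R^2,n) \longrightarrow F(\mathbb R^2,n-1), \]
whose fiber over a configuration is $\mathbb R^2$ with $n-1$ punctures, homotopy equivalent to a wedge of $(n-1)$ circles. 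These fibrations split after one loop (the base is a $K(\pi,1)$ with free $\pi$ on each fiber up to a point), and I would read off that $H_*(F(\mathbb R^2,n))$ is a free abelian group in each degree with a basis indexed by certain trees/monomials in classes $\omega_{ij}\in H_1$ coming from one point going once around another.

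Next I would identify the operad structure. The space $\mathcal E[2](2)\simeq F(\mathbb R^2,2)\simeq S^1$ contributes one generator $\mu\in H_0$ and one generator $\lambda\in H_1$; under the $\Sigma_2$-action, $\mu$ is invariant (so it gives a commutative binary product $\cdot$) while $\lambda$ transforms by the sign (so it gives an antisymmetric degree-$1$ bracket $[-,-]$), matching the generators of $e_2$. Using operadic composition $\mathcal E[2](2)\times\mathcal E[2](2)\to\mathcal E[2](3)$ applied to $\lambda$ in both slots, one computes a Borromean-type configuration in $H_1(F(\mathbb R^2,3))$ and reads off the Jacobi identity for $[-,-]$; similarly plugging $\mu$ into one slot of $\lambda$ and expanding in the basis above gives the Poisson/Leibniz relation $[a,b\cdot c]=[a,b]\cdot c\pm b\cdot[a,c]$.

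This produces a map of operads $e_2\to H_*(\mathcal E[2])$ which is surjective by construction (the basis above is visibly generated by $\mu$ and $\lambda$ under operadic composition), and one checks it is injective by comparing Poincaré series: $e_2(n)$ is well known to be a free abelian group of rank $n!$ in total, which is also the rank I computed from the Fadell--Neuwirth spectral sequence. The main obstacle I anticipate is the bookkeeping in Step~2: extracting an explicit $\Sigma_n$-equivariant basis of $H_*(F(\mathbb R^2,n))$ in terms of iterated brackets and products of the classes $\omega_{ij}$, and then checking that operadic composition on homology sends the generators of $e_2$ to these classes with the right signs; once that dictionary is in place, verifying Jacobi and Leibniz reduces to computations in $H_*(F(\mathbb R^2,3))$, which has rank $6$ in total and can be handled by hand.
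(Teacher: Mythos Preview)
Your outline is a reasonable sketch of Cohen's original computation, and the individual steps (deformation retraction to $F(\mathbb R^2,n)$, the Fadell--Neuwirth fibrations, Arnold's basis in terms of the classes $\omega_{ij}$, and the rank count) are all standard and correct. One small quibble: the statement in the paper has $\mathcal E[k]$, but as the surrounding text makes clear this is meant for $k=2$, and you have correctly read it that way.

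That said, the paper does not actually prove this theorem. It is recorded as a classical result of F.\ Cohen from the 1970s and is simply quoted, without argument, as background for the discussion of the Deligne Conjecture. So there is no ``paper's own proof'' to compare your proposal against; you have supplied an argument where the paper gives none. If anything, your write-up is more detailed than what the lecture notes require.
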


Now the following result seems very natural.

\begin{theorem}[Deligne Conjecture]
There is a natural $\mathcal E[2]$-algebra structure on $\Hoch^*(A)$.
\end{theorem}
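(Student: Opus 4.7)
The plan is to realize $\Hoch^*(A)$ as the endomorphism object of the unit in an $\mathcal{E}[1]$-monoidal quasi-category, and then invoke a higher centralizer principle (an incarnation of Dunn additivity) asserting that the endomorphism object of the unit in an $\mathcal{E}[k]$-monoidal quasi-category automatically carries an $\mathcal{E}[k+1]$-algebra structure. This is the rigorous counterpart of the heuristic ``$\End(\id_X)$ is a double loop space'' recalled earlier in the talk.

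First, I would construct, using the quasi-operadic formalism, the $\mathcal{E}[1]$-monoidal quasi-category $\mathcal{B}iMod_A$ of $A$-bimodules, with monoidal structure given by the derived tensor product $M \otimes^L_A N$ and monoidal unit the regular bimodule $\mathbf{1} = A$. Concretely, this is packaged as a cocartesian fibration over $\nerve(Assoc^\otimes)$ extracted from the bimodule construction in \cite{dag3}; equivalently, one builds a symmetric monoidal $(\infty,2)$-category of algebras and bimodules and takes its endomorphism $(\infty,1)$-category at the object $A$.

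Second, I would identify the endomorphism object of $\mathbf{1}$ in this monoidal quasi-category. Essentially by construction of the bimodule category, one has
\[ \End_{\mathcal{B}iMod_A}(\mathbf{1}) \simeq R\Hom_{A \otimes A^{op}}(A,A), \]
and the right-hand side is, by the definition recalled at the start of this talk, precisely the Hochschild cochain complex $\Hoch^*(A)$. So far this equips $\Hoch^*(A)$ with only an $\mathcal{E}[1]$-algebra structure, namely the classical Yoneda/cup product.

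The main step, and the genuine technical content, is the higher centralizer theorem of Lurie \cite{dag6}: for every $\mathcal{E}[k]$-monoidal quasi-category $\mathcal{C}$ with unit $\mathbf{1}$, the endomorphism object $\End_{\mathcal{C}}(\mathbf{1})$ carries a canonical $\mathcal{E}[k+1]$-algebra structure refining the $\mathcal{E}[1]$-structure coming from composition. Applied with $k=1$ to $\mathcal{B}iMod_A$, this produces the desired $\mathcal{E}[2]$-algebra structure on $\Hoch^*(A)$. The hard part is not the (essentially tautological) identification of $\Hoch^*$ as an endomorphism object, but the proof of this centralizer theorem itself: it is the quasi-categorical incarnation of Dunn's additivity equivalence $\mathcal{E}[k] \otimes \mathcal{E}[1] \simeq \mathcal{E}[k+1]$, whose proof requires substantial manipulation of cocartesian fibrations over $\nerve(\Gamma)$ and of the semi-inert morphisms that featured in the definitions above.
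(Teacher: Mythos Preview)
Your proposal is correct and follows essentially the same route as the paper. The paper sketches Lurie's argument in two equivalent packagings: first the heuristic that $\Hoch^*(A)=R\Hom_{A\otimes A^{op}}(A,A)$ is $\End(\id_A)$ in the bimodule $2$-category (your formulation), and then the formal version via the \emph{center} of an $\mathbb E[k]$-algebra, which acquires a commuting $\mathbb E[1]$-structure and hence, by Dunn additivity $\mathbb E[k]\otimes\mathbb E[1]\simeq\mathbb E[k+1]$, an $\mathbb E[k+1]$-structure; for $k=1$ this center is identified with $\Hoch^*(A)$. Your ``endomorphisms of the unit in $\mathcal BiMod_A$'' is exactly the paper's center of $A$, so the two accounts differ only in terminology.
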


This conjecture has been proved in different formulations by different authors around the '90s, but there is still a feeling that there is no full understanding of what is going on.

To understand the problem more fully, we investigate tensor products of operads.  Let $\mathcal O_1$ and $\mathcal O_2$ be (conventional) topological operads.  Maps $a \colon X^I \rightarrow X$ and $b \colon X^J \rightarrow X$ commute if the diagram
\[ \xymatrix{X^{I \times J} \ar[r]^{b^I} \ar[d]_{a^J} & X^I \ar[d] \\
X^J \ar[r] & X} \] commutes.

\begin{definition}
An $(\mathcal O_1, \mathcal O_2)$-algebra is a topological space with commuting structures of $\mathcal O_1$- and $\mathcal O_2$-algebras.
\end{definition}

May claimed in the '80s that such a pair of commuting structures is given by a canonically-defined structure of an algebra over a new operad $\mathcal O_1 \otimes^M \mathcal O_2$, the May tensor product.

It is convenient to present quasi-operads as fibrant objects in a certain simplicial model category.  We first describe the objects of the underlying category.

\begin{definition} \cite{dag3}
A \emph{marked simplicial set} is a pair $(X, \Sigma)$ where $X$ is a simplicial set and $\Sigma$ is a collection of edges containing the degenerate ones.
\end{definition}

\begin{definition} \cite{dag3}
A \emph{preoperad} is a map of marked simplicial sets
\[ p \colon (X, \mathcal E \subseteq X_1) \rightarrow (\nerve(\Gamma, \text{inert}))\]
where $s_0(X_0) \subseteq \mathcal E$ and $p(\mathcal E)$ is inert. \end{definition}

\begin{theorem} \cite{dag3}
The category of preoperads admits a simplicial model category structure for which:
\begin{enumerate}
\item the simplicial structure is defined as in $\SSets_\ast$,

\item cofibrations are the injective maps, and

\item weak equivalences are those giving rise to homotopy equivalences of simplicial homs into each preoperad $p \colon (X, \Sigma) \rightarrow \nerve(\Gamma)$ where each $X$ is an $\infty$-operad and $\Sigma$ is the class of inert arrows.
\end{enumerate}
\end{theorem}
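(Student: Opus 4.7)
The plan is to construct this model structure as a left Bousfield localization of a more basic model structure on the slice category of marked simplicial sets over $(\nerve(\Gamma), \text{inert})$. First, I would equip this slice category with a combinatorial, left proper ``injective'' model structure in which cofibrations are precisely the injections of marked simplicial sets (monomorphisms of underlying simplicial sets together with compatible inclusions of marked edges), and weak equivalences are those maps inducing categorical equivalences after an appropriate Joyal-style fibrant replacement in the slice over $\nerve(\Gamma)$. The simplicial enrichment is inherited from marked simplicial sets: for a preoperad $Y$ and simplicial set $K$ one defines $Y \otimes K$ by taking the product with $K^\sharp$ (all edges marked) in the slice, which is well behaved because the base $\nerve(\Gamma)$ is unchanged.

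Next, to cut down to the fibrant objects specified in the theorem, I would left Bousfield localize at a carefully chosen set $S$ of morphisms designed to encode the three defining axioms of a quasi-operad from the definition above. Concretely, $S$ should include (i) marked horn inclusions $(\Lambda^n_0, \ldots) \hookrightarrow (\Delta^n, \ldots)$ over inert arrows of $\nerve(\Gamma)$ that force the existence of $p$-cocartesian lifts of inert morphisms; (ii) maps of the form $\coprod_{i=1}^n \langle 1 \rangle \to \langle n \rangle$ (with appropriate markings) that force the Segal-type categorical equivalence $\mathcal{O}^\otimes_{\langle n \rangle} \simeq \prod_i \mathcal{O}^\otimes_{\langle 1 \rangle}$; and (iii) further inclusions encoding the fiberwise mapping-space decomposition of axiom (3) of the quasi-operad definition. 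Hirschhorn's general existence theorem for left Bousfield localizations of combinatorial left proper simplicial model categories then furnishes the desired model structure, with cofibrations unchanged (still the injections) and fibrant objects equal to the $S$-local preoperads by construction.

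The main obstacle is verifying that the $S$-local fibrant objects coincide exactly with those $(X, \Sigma)$ for which $X$ is a quasi-operad and $\Sigma$ is the class of inert edges. This is a bidirectional check: one direction shows any quasi-operad (with inert marking) is $S$-local by translating each clause of the quasi-operad definition into a mapping-space equivalence; the other direction extracts the quasi-operad axioms from $S$-locality, and the Segal condition is the subtlest, since a categorical equivalence on fibers over $\nerve(\Gamma)$ must be recovered from an equivalence of marked mapping complexes. Checking compatibility with the simplicial structure, i.e., the pushout-product axiom after localization, is routine once it is established for the underlying injective model structure on marked simplicial sets, a verification that parallels Lurie's earlier treatment of marked simplicial sets as a model for $(\infty, 1)$-categories.
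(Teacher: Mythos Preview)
The paper does not supply its own proof of this theorem: it is stated in the notes for Hinich's talk with a citation to Lurie's \emph{Derived Algebraic Geometry III}, and no argument is given. So there is nothing in the paper to compare your proposal against directly; the relevant comparison is with Lurie's original argument.

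Your strategy via left Bousfield localization is not the route Lurie takes. In the cited reference (later absorbed into \emph{Higher Algebra}, around Proposition 2.1.4.6 and Appendix B.2), the model structure is built directly from the machinery of \emph{categorical patterns}: one specifies the base $\nerve(\Gamma)$, the class of inert edges, and a collection of diagrams encoding the Segal-type fiber conditions, and a general theorem produces the simplicial model structure in one step, with the fibrant objects characterized by lifting against an explicit class of ``$\mathfrak P$-anodyne'' maps. No intermediate model structure is localized.

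Your approach is not unreasonable in spirit, but it has a real gap at the very first step: the ``basic injective model structure'' you want to localize is not clearly identified, and the obvious candidate does not work. If you mean the cocartesian model structure on marked simplicial sets over $\nerve(\Gamma)$ from HTT \S 3.1, its fibrant objects are cocartesian fibrations with \emph{all} cocartesian edges marked, whereas a quasi-operad need only admit cocartesian lifts of inert arrows; since Bousfield localization can only \emph{shrink} the class of fibrant objects, you cannot reach quasi-operads from there. If instead you mean something like a sliced Joyal structure with markings grafted on, you owe an existence proof for that base structure before invoking Hirschhorn, and at that point you are essentially redoing the categorical-pattern construction by hand. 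In short, the localization framing obscures rather than simplifies the genuine content, which is producing a combinatorial model structure on marked objects whose fibrancy conditions mix inner-horn lifting, cocartesian lifting over a \emph{restricted} class of base arrows, and Segal decomposition --- exactly what Lurie's categorical-pattern theorem packages.
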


For $p \colon \mathcal O^\otimes \rightarrow N(\Gamma)$, denote by $\mathcal O^{\otimes, \natural}$ the preoperad $(X, \Sigma)$ with $\Sigma$ the inert arrows.

Let $K^\# =(K, K_1)$ be the maximal pair and $K^\flat=(K, s_0(K_0))$ be the minimal pair.  If $\overline X=(X, \mathcal E)$, then \[ \Hom(K, \Map^\#(\overline X, \overline Y))= \Hom_{N(\Gamma)}(\overline X \times K^\#, \overline Y) \] which is a Kan complex, and
\[ \Hom(K, \Map^\flat(\overline X, \overline Y))= \Hom_{N(\Gamma)}(\overline X \times K^\flat, \overline Y) \] which is a quasi-category in good cases.  The cofibrations are injections, and fibrant objects are the marked simplicial sets $(X, \mathcal E)$ where $X$ is a quasi-operad and $\mathcal E$ consists of the cocartesian lifts of inert maps.  A map $f \colon X \rightarrow Y$ is a weak equivalence if for any $\mathcal O$ fibrant, $\Map^\#(Y, \mathcal O) \rightarrow \Map^\#(X, \mathcal O)$ is a weak equivalence.

Let $\overline X$ and $\overline Y$ be preoperads, and consider the join $\wedge \colon \nerve(\Gamma) \times \nerve(\Gamma) \rightarrow \nerve(\Gamma)$, defined by $((I,i), (J,j)) \mapsto (I \times J)/((i \times J) \amalg (I \times j))$.  Write $\overline X \odot \overline Y$ for the composite
\[ \overline X \times \overline Y \rightarrow \nerve(\Gamma) \times \nerve(\Gamma) \rightarrow \nerve(\Gamma). \]

If $(X, \Sigma)$ and $(X', \Sigma')$ are operads (fibrant preoperads), the product $(X, \Sigma) \odot (X', \Sigma')$ is not usually fibrant.

We can use fibrant replacement to define Lurie's tensor product.

\begin{definition}
If $\mathcal O$ and $(\mathcal O')^\otimes$ are operads, then $\mathcal O^\otimes \otimes^L (\mathcal O')^\otimes$ is a fibrant replacement of $(\mathcal O \odot \mathcal O')^\otimes$.
\end{definition}

Thus, we have the following.

\begin{definition}
Let $\mathcal O^\otimes$, $\mathcal O'^\otimes$, and $\mathcal O''^\otimes$ be quasi-operads.  A diagram
\[ \xymatrix{\mathcal O'^\otimes \times \mathcal O''^\otimes \ar[r] \ar[d] & \mathcal O^\otimes \ar[d] \\
\nerve(\Gamma) \times \nerve(\Gamma) \ar[r] & \nerve(\Gamma) } \]
\emph{exhibits} $\mathcal O^\otimes$ \emph{as a tensor product of} $\mathcal O'^\otimes$ and $\mathcal O''^\otimes$ if the induced map
\[ \mathcal O'^{\otimes, \natural} \odot \mathcal O''^{\otimes, \natural} \rightarrow \mathcal O^{\otimes, \natural} \]
is a weak equivalence of preoperads, i.e., if for every quasi-operad $\mathcal C^\otimes$,
\[ \Map^\sharp(\mathcal O, \mathcal C^{\otimes, \natural}) \rightarrow \Map^\sharp(\mathcal O' \odot \mathcal O'', \mathcal C^{\otimes, \natural}) \] is a homotopy equivalence.
\end{definition}

For a topological operad, there is a map $\mathcal O \otimes^L \mathcal O' \rightarrow \mathcal O \otimes^M \mathcal O'$.

\begin{definition}
Suppose that $\mathcal C$ and $\mathcal O$ are operads.  An $\mathcal O$-\emph{algebra} in $\mathcal C$ is a section $A \colon \mathcal O \rightarrow \mathcal C$, i.e,
\[ \xymatrix{ \mathcal O^\otimes \ar[rr] \ar[dr] && \mathcal C^\otimes \ar[dl] \\
& nerve(\Gamma) & } \] such that inert maps go to inert maps.
\end{definition}

Denote by $\Alg_\mathcal O(\mathcal C)$ the category of $\mathcal O$-algebras in $\mathcal C$.

\begin{definition}
The quasi-category $Alg_\mathcal O(\mathcal C)^\otimes$ represents the functor $PreOp \rightarrow Kan$ given by $Y \mapsto \Hom^\#_{PreOp}(Y \odot \mathcal O^{\otimes \natural}, \mathcal C^{\otimes \natural})$, where if $\mathcal O^\otimes$ is an operad, then $\mathcal O^{\otimes \natural}$ is a preoperad with $\mathcal E$ consisting of the inert maps.
\end{definition}

A map of quasi-operads is a map preserving inert arrows.  A fibration is a map which is a (categorical) fibration.

\begin{definition}
Let $p \colon \mathcal C \rightarrow \mathcal O$ be a fibration of operads and $u \colon \mathcal O' \rightarrow \mathcal O$ a morphism of operads.  An $\mathcal O$-\emph{algebra object} in $\mathcal C$ is the $\mathcal O'$-algebra object in $\mathcal O' \times_\mathcal O \mathcal C$, i.e., a map of operads $A \colon \mathcal O' \rightarrow \mathcal C$ such that $p \circ A=u$.
\end{definition}

\begin{example}
In conventional category theory, any category $\mathcal C$ defines an operad (with no $n$-ary operations for $n \neq 1$).  An algebra over such an operad, say in $\mathcal Top$, is just a functor $\mathcal C \rightarrow \mathcal Top$.
\end{example}

\begin{example}
The quasi-categorical version of the above example is as follows.  Let $Triv$ be the trivial operad.  A map $Triv \rightarrow \nerve(\Gamma)$ is an injection, so any operad $\mathcal O^\otimes \rightarrow \nerve(\Gamma)$ admits at most one morphism to $Triv$.  Existence of such a morphism means that all arrows in $\mathcal O^\otimes$ are inert.  A morphism $\mathcal O^\otimes \rightarrow \mathcal Top$ restricts to a map $\mathcal O^\otimes_{\langle 1 \rangle} \rightarrow \mathcal Top$.  Also, the map $\Fun^{lax}(\mathcal O^\otimes, \mathcal Top^\otimes) \rightarrow \Fun(\mathcal O^\otimes_{\langle 1\rangle}, \mathcal Top)$ is an acyclic fibration.
\end{example}

\begin{example}
Let $\mathcal O= \nerve(\Gamma)$ and $\mathcal C=\mathcal Top^\otimes$.  A map $\mathcal O \rightarrow \mathcal C$ assigns to $\langle n \rangle$ a collection of $n$ topological spaces (all homotopy equivalent to each other).  The arrow $\langle 2 \rangle \rightarrow \langle 1 \rangle$ carrying $1,2 \mapsto 1$ defines an operation which is commutative up to homotopy.  Thus, our notion of $\mathcal O$-algebra is automatically the ``up-to-homotopy" notion.
\end{example}

\begin{example}
Given a topological operad $\widetilde {\mathcal O}$, define the corresponding quasi-operad $\mathcal O^\otimes$.  It is an easy exercise to assign to any $\widetilde {\mathcal O}$-algebra in $\mathcal Top$ (in the conventional sense) a morphism of operads $\mathcal O^\otimes \rightarrow \mathcal Top^\otimes$ and therefore an $\mathcal O^\otimes$-algebra in the sense of our definition.
\end{example}

The quasi-category $Alg_\mathcal O(\mathcal C)$ of $\mathcal O$-algebras in an operad $\mathcal C$ has a canonical operad structure: the functor $\mathcal{PO}p Y \mapsto \Hom_{\mathcal{PO}p}(Y \odot \mathcal O^{\otimes, \natural}, \mathcal C^{\otimes, \natural})$ is representable by an operad which we denote $Alg_\mathcal O(\mathcal C)^\otimes$; it is even a symmetric monoidal quasi-category if $\mathcal C$ is.  The fiber of $Alg_\mathcal O(\mathcal C)^\otimes$ at $\langle 1 \rangle$ is precisely $Alg_\mathcal O(\mathcal C)$.

Now we can define $Alg_{\mathcal O_1}(Alg_{\mathcal O_2}(\mathcal C))$ and compare it to $Alg_{\mathcal O_1 \otimes \mathcal O_2}(\mathcal C)$:
\[ \begin{aligned}
\Hom_{\mathcal {PO}p}(Y, Alg_{\mathcal O_1}(Alg_{\mathcal O_2}(\mathcal C))) & = \Hom_{\mathcal{PO}p}(Y \odot \mathcal O_1^{\otimes, \natural}, Alg_{\mathcal O_2}(\mathcal C)^{\otimes, \natural}) \\
& = \Hom_{\mathcal {PO}p}(Y \otimes \mathcal O_1^{\otimes, \natural} \odot \mathcal O_2^{\otimes, \natural}, \mathcal C^{\otimes, \natural}) \\
& = \Hom_{\mathcal{PO}p}(Y, Alg_{\mathcal O_1 \otimes \mathcal O_2}(\mathcal C)).
\end{aligned} \]

Given two topological operads $\widetilde{\mathcal O_1}$ and $\widetilde{\mathcal O_2}$, we have a map
\[ \mathcal O_1 \otimes^L\mathcal O_2 \rightarrow \mathcal O_1 \otimes^M\mathcal O_2 \]
which need not be an equivalence in general.  For instance, $Assoc \otimes^M Assoc = Com$.  However, we believe that this is an equivalence if $\widetilde{\mathcal O_1}$ and $\widetilde{\mathcal O_2}$ are cofibrant.

The quasi-operads $\mathbb E[k]$ are defined as the ones constructed from the topological operads $\mathcal E[k]$.  The map $\mathbb E[k] \rightarrow N(\Gamma)$ is bijective on objects; the map
\[ \Map_{\mathbb E[k]}(\langle m \rangle, \langle n \rangle) \rightarrow \Map_{N(\Gamma)}(\langle m \rangle, \langle n \rangle) \] is $(k-1)$-connective.

\begin{example}
Let $\mathcal C^\otimes$ be a symmetric monoidal quasi-category.  The arrow $\varphi \colon \langle 0 \rangle \rightarrow \langle 1 \rangle$ gives $\varphi_! \colon \mathcal C^\otimes_{\langle 0 \rangle} \rightarrow \mathcal C^\otimes_{\langle 1 \rangle}$.  Since $\mathcal C^\otimes, {\langle 0 \rangle}$ is contractible, this defines an object $1$ in $\mathcal C^\otimes_{\langle 1 \rangle}$, uniquely up to equivalence.  An $\mathbb E[0]$-algebra $A$ in $\mathcal C^\otimes$ defines in particular a map $1 \rightarrow A$ in $\mathcal C^\otimes_{\langle 1 \rangle}$.  The claim is that
\[ \Fun^{lax}(\mathbb E[0], \mathcal C^\otimes) \rightarrow \mathcal C^\otimes_{\langle 1 \rangle} \] is an acyclic fibration.  In other words, an $\mathbb E[0]$-algebra is uniquely given, up to equivalence, by a pointed object in $\mathcal C^\otimes_{\langle 1 \rangle}$.
\end{example}

Assume that $\mathcal C^\otimes \rightarrow \nerve(\Gamma)$ is a symmetric monoidal quasi-category such that $\Map_{\mathcal C^\otimes}(x,y)$ are all $(n-1)$-truncated (have no higher homotopy groups).  Then we claim that $\Alg_{Com}(\mathcal C) \rightarrow \Alg_{\mathbb E[k]}(\mathcal C)$ is an equivalence for $k>n$.

\begin{example}
An $\mathbb E[2]$-algebra in $\Sets$ is already a commutative monoid.  An $\mathbb E[1]$-algebra in $\mathcal Cat$ is a monoidal category; an $\mathbb E[2]$-algebra in $\mathcal Cat$ is a braided monoidal category; an $\mathbb E[3]$-algebra in $\mathcal Cat$ is a symmetric monoidal category.
\end{example}

Define a map $\mathcal E[m] \times \mathcal E[n] \rightarrow \mathcal E[m+n]$ which assigns to a pair $\alpha \colon \square^m \times I \rightarrow \square^m$, $\beta \colon \square^n \times J \rightarrow \square^n$ .  The product map $\alpha \otimes \beta$ is $\square^{m+n} \times I \times J \rightarrow \square^{m+n}$.

\begin{theorem} (Dunn, '88)
The above described map induces and equivalence $\mathcal E[m] \otimes \mathcal E[n] \rightarrow \mathcal E[m+n]$.
\end{theorem}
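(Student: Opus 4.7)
My plan is to reduce the statement to a universal-property check on algebras and then verify it by an Eckmann-Hilton style interchange argument, using the iterated algebra identity already derived above.

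First, by the definition of $\otimes^L$ as a fibrant replacement of $\odot$, the induced map $\mathcal{E}[m]\otimes^L\mathcal{E}[n]\to\mathcal{E}[m+n]$ is an equivalence of quasi-operads if and only if, for every symmetric monoidal quasi-category $\mathcal{C}^\otimes$, the map
\[
\Alg_{\mathcal{E}[m+n]}(\mathcal{C})\;\longrightarrow\;\Alg_{\mathcal{E}[m]\otimes \mathcal{E}[n]}(\mathcal{C})
\]
is a categorical equivalence. Using the identity $\Alg_{\mathcal{O}_1\otimes\mathcal{O}_2}(\mathcal{C})\simeq \Alg_{\mathcal{O}_1}(\Alg_{\mathcal{O}_2}(\mathcal{C}))$ established above, this is exactly the question of whether an $\mathcal{E}[m+n]$-algebra structure on $X\in\mathcal{C}$ is the same data, up to coherent homotopy, as an $\mathcal{E}[m]$-algebra structure in the quasi-category of $\mathcal{E}[n]$-algebras in $\mathcal{C}$. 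Thus the target is reformulated as an equivalence of quasi-categories of algebras.

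Second, I would construct the comparison geometrically. Rectilinear embeddings of $(m+n)$-cubes in the first $m$ coordinates commute (on the nose) with those in the last $n$ coordinates, so the product-of-embeddings map $\mathcal{E}[m]\times\mathcal{E}[n]\to\mathcal{E}[m+n]$ factors through $\mathcal{E}[m]\odot\mathcal{E}[n]$, inducing a natural comparison map of preoperads. On the level of algebras, the pullback along this map sends an $\mathcal{E}[m+n]$-algebra to the pair consisting of its restrictions along the two axis-embeddings, which together with their commutation data provide the required $\mathcal{E}[m]$-algebra in $\Alg_{\mathcal{E}[n]}(\mathcal{C})$. I would verify that this assignment is fully faithful and essentially surjective by checking, arity by arity, that the map
\[
\mathcal{E}[m](k_1)\times\mathcal{E}[n](k_2)\;\longrightarrow\;\text{(appropriate space of mixed operations in }\mathcal{E}[m+n]\text{)}
\]
is a weak equivalence; the key topological input is that any rectilinear embedding of $(m+n)$-cubes can be canonically deformed, through a contractible space of choices, to a product of a rectilinear embedding in $\square^m$ and one in $\square^n$. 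This is exactly the classical Dunn interchange deformation, obtained by straight-line-homotoping the projections onto the two coordinate blocks to generic position.

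Third, to leverage these low-arity computations I would argue by induction: the case $m=n=1$ reduces to the familiar fact that two commuting $\mathbb{E}[1]$-structures give an $\mathbb{E}[2]$-structure (the Dunn–Eckmann–Hilton statement at the bottom of the induction), and the general case follows either from an induction on $m+n$ using $\mathcal{E}[m+n]\simeq\mathcal{E}[1]\otimes^L\mathcal{E}[m+n-1]$ together with associativity of $\otimes^L$, or from a direct arity-by-arity contractibility verification of the above deformation.

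The main obstacle, and the one absorbing most of the work, is the contractibility claim for the space of interchange deformations: one must show that the space of factorizations of a rectilinear $(m+n)$-cube embedding as an $m$-cube embedding times an $n$-cube embedding, remembered up to all higher coherences, is contractible, and moreover that this contractibility is compatible with the symmetric group actions and with operadic composition. In the preoperad model this amounts to a lifting property against inert maps in $\nerve(\Gamma)$, which forces one to keep track of markings; packaging this so that the equivalence of preoperads $\mathcal{E}[m]^{\otimes,\natural}\odot\mathcal{E}[n]^{\otimes,\natural}\to\mathcal{E}[m+n]^{\otimes,\natural}$ can be tested on fibrant replacements is where the combinatorial coherence work genuinely lives.
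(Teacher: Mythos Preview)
The paper does not prove this theorem at all: it is stated, attributed to Dunn (1988), and immediately followed by the remark that the corresponding statement for the quasi-operads $\mathbb E[n]$ should follow from Dunn's result since the $\mathbb E[k]$ are cofibrant, and that Lurie gives an independent proof in \cite{dag6}. There is therefore no ``paper's own proof'' to compare your proposal against.

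That said, a couple of observations. First, your outline is really a sketch of the \emph{quasi-operadic} statement (for $\mathbb E[k]$ and $\otimes^L$), which the paper distinguishes from Dunn's original theorem about the topological operads $\mathcal E[k]$ with the May tensor product $\otimes^M$; Dunn's 1988 argument is a direct topological one about spaces of rectilinear embeddings and does not pass through quasi-categories of algebras. Your approach---reduce to an equivalence of algebra quasi-categories via $\Alg_{\mathcal O_1 \otimes \mathcal O_2}(\mathcal C)\simeq \Alg_{\mathcal O_1}(\Alg_{\mathcal O_2}(\mathcal C))$, then run an interchange/induction argument---is essentially the strategy the paper attributes to Lurie, not to Dunn. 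Second, your reduction ``equivalence of quasi-operads iff equivalence on $\Alg_{(-)}(\mathcal C)$ for every symmetric monoidal $\mathcal C$'' is slightly too narrow as stated: the Yoneda-style test in the preoperad model structure is against arbitrary fibrant preoperads (i.e., quasi-operads), not only symmetric monoidal ones, so you should phrase the first step accordingly. Apart from that, the outline is reasonable, and you have correctly located where the real work sits: the contractibility and coherence of the interchange deformation, compatibly with symmetric-group actions and operadic composition.
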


We assume the similar result for quasi-operads $\mathbb E[n]$ can be deduced from Dunn's theorem, since $\mathbb E[k]$ are cofibrant.  Lurie gives an independent proof \cite{dag6}.

What is the correct definition of the center of an associative algebra?  Here, ``correct" means ``categorical" with the hope of generalization to higher categories.

\begin{definition}
Let $A$ be an associative algebra with unit.  Its \emph{center} is a universal object in the category of diagrams
\[ \xymatrix{ & A \otimes Z \ar[dr] \\
A \ar[ur]^{id \otimes 1} \ar[rr]^{id} && A.} \]
\end{definition}

We are going to give a quasi-categorical analogue for this construction, fit for $\mathbb E[k]$-algebras.  The center will automatically have the extra structure of an $\mathbb E[1]$-algebra commuting with the original structure.  Together with the additivity theorem and the identification of the center with the Hochschild complex, in the case $k=1$, gives the Deligne Conjecture.


\section{Models for $(\infty, n)$-categories}

An $(\infty, n)$-category is a generalization of an $(\infty, 1)$-category, where now $k$-morphisms are invertible for $k>n$.  The heuristic idea is that an $(\infty, n)$-category should be modeled by a category enriched in $(\infty, n-1)$-categories.

In fact, we have already used that one model for $(\infty, 1)$-categories is that of simplicial categories, or categories enriched in simplicial sets, and simplicial sets can be regarded as $(\infty, 0)$-categories, which are just $\infty$-groupoids.  If we want to generalize, we first need to have a closed monoidal category, so that composition makes sense.  If we want to do so in such a way that we get a model structure, we need at the very least that our model form a closed monoidal model category.  While simplicial categories do form a monoidal category under cartesian product, this structure is not in fact compatible with the model structure and therefore we cannot blindly proceed be induction.

Since quasi-categories form a monoidal model category with internal hom objects, we can enrich over them and obtain a model for $(\infty, 2)$-categories.  Similarly, we could take categories enriched in complete Segal spaces.  However, we will have the same problems as before if we then try to enrich over these enriched categories to continue the induction.

Other models for $(\infty, n)$-categories generalize some of our other approaches to $(\infty, 1)$-categories.  Because Segal categories and complete Segal spaces are simplicial objects, we can generalize them by considering multi-simplicial objects.  We first look at the $n$-fold complete Segal spaces of Barwick and Lurie.

We begin with $n$-fold simplicial spaces, or functors $(\Deltaop)^n \rightarrow \SSets$, which can equivalently be regarded as functors $\Deltaop \rightarrow \SSets^{(\Deltaop)^{n-1}}$.  An $n$-fold simplicial space is \emph{essentially constant} if there exists a levelwise weak equivalence $X' \rightarrow X$ where $X'$ is constant.

\begin{definition} \cite{lurietft}
An $n$-fold simplicial space $X \colon \Deltaop \rightarrow \SSets^{(\Deltaop)^{n-1}}$ is an $n$-\emph{fold Segal space} if:
\begin{enumerate}
\item each $X_m$ is an $n-1$-fold Segal space,

\item $X_m \rightarrow \underbrace{X_1 \times_{X_0} \cdots \times_{X_0} X_1}_m$ is a weak equivalence of $n-1$-fold Segal spaces, and

\item $X_0$ is essentially constant.
\end{enumerate}
\end{definition}

\begin{definition} \cite{lurietft}
An $n$-\emph{fold complete Segal space} is an $n$-fold Segal space such that:
\begin{enumerate}
\item each $X_m$ is a complete Segal space, and

\item the simplicial space $X_{m, 0, \ldots, 0}$ is a complete Segal space.
\end{enumerate}
\end{definition}

Similarly, one can define \emph{Segal} $n$-\emph{categories} as given by Hirschowitz-Simpson and Pelissier, which are $n$-fold simplicial spaces with such a Segal condition but with various discreteness conditions replacing the completeness conditions \cite{hs}, \cite{pel}.

While $1$-fold complete Segal spaces are just complete Segal spaces, the nice property that the model structure $\css$ is cartesian does not seem to be retained.  With the goal of finding a model which was still cartesian, Rezk developed his $\Theta_n$-spaces \cite{rezktheta}.

We first need to give an inductive definition of the category $\Theta_n$.  Let $\Theta_0$ be the terminal category with one object and no non-identity morphisms.  Then inductively define $\Theta_n$ to have objects $[m](c_1, \ldots, c_n)$ where $[m]$ is an object of ${\bf \Delta}$ and $c_1, \ldots, c_m$ are objects of $\Theta_{n-1}$.  One can think of these objects as strings of labeled arrows
\[ \xymatrix@1{0 \ar[r]^{c_1} & 1 \ar[r]^{c_2} & 2 \ar[r]^{c_3} & \cdots \ar[r]^{c_{m-1}} & m-1 \ar[r]^{c_m} & m.} \]  Morphisms
\[ [m](c_1, \ldots, c_m) \rightarrow [p](d_1 \ldots, d_p) \] are given by $(\delta, \{f_{ij}\})$ where $\delta \colon [m] \rightarrow [p]$ is a morphism of ${\bf \Delta}$ and $f_{ij} \colon c_i \rightarrow d_j$ is a morphism in $\Theta_{n-1}$, defined whenever $\delta(i-1)<j \leq \delta(i)$.  For example, if we take $\delta \colon [3] \rightarrow [4]$ defined by $0,1 \mapsto 1$, $2 \mapsto 3$, and $3 \mapsto 4$, then we would have $(\delta, f_{22}, f_{23}, f_{34}) \colon [3](c_1, c_2, c_3) \rightarrow [4](d_1, d_2, d_3, d_4)$.  Notice in particular that $\Theta_1={\bf \Delta}$.

The idea here is that objects of $\Theta_n$ are ``basic" strict $n$-categories in the same way that objects of ${\bf \Delta}$ are ``basic" categories.   If we consider functors $\Theta_n^{op} \rightarrow \SSets$, then there are ``Segal maps" giving composition in multiple levels, and we want to look at \emph{Segal space objects} for which these maps are weak equivalences.  While it is more difficult to explain, there is also a notion of ``completeness" for such functors, giving a definition of \emph{complete Segal objects}.

\begin{theorem} \cite{rezktheta}
There is a cartesian model category $\Theta_nSp$ on the category of functors $\Theta_n^{op} \rightarrow \SSets$ in which the fibrant objects are the complete Segal objects.
\end{theorem}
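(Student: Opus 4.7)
The plan is to proceed by induction on $n$, exploiting the wreath-product description of $\Theta_n$ as built from ${\bf \Delta}$ and $\Theta_{n-1}$. The base case $n=1$ is the complete Segal space model structure $\css$ on $\SSets^{\Deltaop}$, which is already known to be cartesian. For the inductive step, assume that $\Theta_{n-1}Sp$ has been constructed as a cartesian model category in which the fibrant objects are the complete Segal objects of $\SSets^{\Theta_{n-1}^{op}}$.

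First I would equip the functor category $\SSets^{\Theta_n^{op}}$ with an unlocalized model structure that is left proper and combinatorial (the injective model structure is a natural choice), so that Hirschhorn's machinery for left Bousfield localization applies, with explicit generating (acyclic) cofibrations built out of the representables on $\Theta_n$. Next, I would identify a set $\mathcal{S}$ of maps whose $\mathcal{S}$-local objects are exactly the complete Segal objects. These come in two families: Segal-type maps, which force $X[m](c_1,\ldots,c_m)$ to be, up to homotopy, an iterated fiber product of the $X[1](c_i)$ over $X[0]$; and completeness-type maps, analogues of $E \to F(0)$ at every level of the tower, which force the appropriate invertibility condition. Applying left Bousfield localization of the injective model structure at $\mathcal{S}$ produces $\Theta_n Sp$, and by construction its fibrant objects are exactly the complete Segal objects.

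The third and hardest step is to verify that $\Theta_n Sp$ remains cartesian, i.e., that the pushout-product axiom survives the localization. By a standard reduction, this amounts to showing that for each generating cofibration $i$ and each localizing map $f \in \mathcal{S}$, the pushout-product of $i$ and $f$ is again an $\mathcal{S}$-local equivalence. The wreath-product presentation of $\Theta_n$ means that the generators decompose into pieces coming from ${\bf \Delta}$ and pieces coming from $\Theta_{n-1}$, so the verification splits into two cases: one handled directly by the inductive hypothesis that $\Theta_{n-1}Sp$ is cartesian, and one essentially identical to the argument showing $\css$ is cartesian. The main obstacle I expect is exactly this cartesian-compatibility check: one must organize $\mathcal{S}$ so that the class of $\mathcal{S}$-local equivalences is stable under pushout-product with arbitrary cofibrations, and this requires explicit combinatorial analysis of products of the form $\Theta_n[m](c_1,\ldots,c_m) \times \Theta_n[p](d_1,\ldots,d_p)$. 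This is the technical heart of Rezk's construction, and once it is done, the remaining properties (existence of factorizations, the identification of fibrant objects, and equivalence of fibrant weak equivalences with levelwise weak equivalences) follow from the general theory of Bousfield localization.
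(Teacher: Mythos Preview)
The paper does not actually prove this theorem: it is stated with a citation to \cite{rezktheta} and then immediately used, with no argument given in the text. These are workshop lecture notes, and this result is simply imported from Rezk's paper. So there is no ``paper's own proof'' to compare against.

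That said, your outline is a reasonable high-level sketch of what Rezk does in the cited reference. The inductive use of the wreath-product description $\Theta_n \simeq {\bf \Delta} \wr \Theta_{n-1}$, the construction of $\Theta_n Sp$ as a left Bousfield localization of the injective (equivalently Reedy) model structure at an explicit set of Segal and completeness maps, and the identification of the cartesian check as the technical heart are all accurate. One point worth sharpening: Rezk does not run a bare induction that directly invokes cartesianness of $\Theta_{n-1}Sp$ inside the pushout-product argument. Instead he develops a general framework of ``cartesian presentations'' and proves abstract closure properties showing that if a presentation on a category $\mathcal C$ is cartesian, then an associated presentation on ${\bf \Delta} \wr \mathcal C$ is again cartesian. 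The combinatorial analysis of products of representables you anticipate is packaged into this machinery rather than done ad hoc at each stage. Your proposal is not wrong, but if you tried to execute it literally you would find yourself reinventing exactly that abstraction.
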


Now, we can enrich over this structure, using a result of Lurie \cite{lurie}.

\begin{theorem} (B-Rezk)
There is a model structure on the category of small categories enriched over $\Theta_{n-1}Sp$, with weak equivalences analogues of Dwyer-Kan equivalences.
\end{theorem}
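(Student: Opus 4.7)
The plan is to mimic the construction of the Dwyer-Kan model structure on simplicial categories, but now carried out internally in $\Theta_{n-1}Sp$ and deduced as an application of Lurie's general enrichment theorem (the cited result in \cite{lurie}, roughly A.3.2.24 of HTT). Concretely, I would first verify that $\Theta_{n-1}Sp$ satisfies the hypotheses of that theorem: it is a combinatorial, left proper, cartesian closed symmetric monoidal model category (given by Rezk's theorem quoted immediately above), and the weak equivalences are closed under filtered colimits. Together with the existence of a tractable set of generating cofibrations with cofibrant domains, these are the inputs required to produce a transferred model structure on the category $\Theta_{n-1}Sp\text{-}\mathcal{C}at$ of small categories enriched in $\Theta_{n-1}Sp$.

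Next I would specify the model structure. Weak equivalences are the DK-equivalences: a $\Theta_{n-1}Sp$-enriched functor $F \colon \mathcal C \to \mathcal D$ is a weak equivalence if $\Map_{\mathcal C}(x,y) \to \Map_{\mathcal D}(Fx,Fy)$ is a weak equivalence in $\Theta_{n-1}Sp$ for all objects $x,y$, and if the induced functor of homotopy categories $\Ho(\mathcal C) \to \Ho(\mathcal D)$ (obtained by applying $\pi_0$ of the underlying simplicial set of $(0,\dots,0)$-sections and passing to path components) is essentially surjective. Fibrations are defined by a lifting characterization: $F$ is a fibration if each $\Map_{\mathcal C}(x,y) \to \Map_{\mathcal D}(Fx,Fy)$ is a fibration in $\Theta_{n-1}Sp$ and $F$ has the isofibration property with respect to the chosen notion of equivalence in $\Ho(\mathcal D)$. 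Generating (acyclic) cofibrations are built from the generators of $\Theta_{n-1}Sp$ by the standard ``free category on one arrow'' construction $U \mapsto \{x \xrightarrow{U} y\}$, together with a chosen cofibrant replacement $H$ of the free-walking-equivalence object that plays the role of Dwyer-Kan's interval.

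The verification then proceeds in the usual steps: (i) check the small object argument applies to both sets of generators, (ii) check that pushouts of generating acyclic cofibrations are weak equivalences (the ``homotopical transfer'' step), and (iii) identify fibrant objects as those enriched categories all of whose mapping objects are fibrant in $\Theta_{n-1}Sp$ and which are isofibrant. To handle the enrichment properly one invokes Lurie's invertibility hypothesis, which says that for any trivial cofibration $U \to V$ in $\Theta_{n-1}Sp$ the induced enrichment change on ``free category on an arrow'' is still a weak equivalence; because $\Theta_{n-1}Sp$ is cartesian closed and left proper this reduces to a tractable statement about products with cofibrant objects.

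The main obstacle I expect is step (ii), and in particular the invertibility hypothesis: one must show that gluing in a new $\Theta_{n-1}$-space of morphisms along a trivial cofibration does not alter the homotopy type of the other mapping spaces, which requires some control over iterated pushouts in the enriched setting. In the simplicial case this is where the argument in \cite{simpcat} had to be done by hand via explicit free constructions, and the analogous work here rests on the fact that $\Theta_{n-1}Sp$ is cartesian closed (so internal $\Hom$s detect weak equivalences of cofibrant objects) and that the generating cofibrations of $\Theta_{n-1}Sp$ have sufficiently well-behaved domains. Once this is established, the recognition theorem produces the model structure, and the identification of the weak equivalences with the DK-equivalences is immediate from the definition.
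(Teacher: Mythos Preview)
The paper does not actually supply a proof of this theorem: it merely states the result, preceded by the remark ``Now, we can enrich over this structure, using a result of Lurie \cite{lurie}.'' Your proposal---invoking Lurie's general theorem on model structures for enriched categories (HTT, A.3.2.24) after checking that $\Theta_{n-1}Sp$ is combinatorial, left proper, and cartesian closed---is precisely the route the paper gestures at, so your approach is correct and aligned with the paper's intent; the detailed verification of the invertibility hypothesis that you flag is indeed the nontrivial content, but it is not carried out in these workshop notes either.
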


\section{Finite approximations to $(\infty, 1)$-categories/homotopy theories}

\centerline{(Talk given by David Blanc, plus some additional notes)}

\vskip .1 in

In \cite{dksimploc}, Dwyer and Kan proposed weak equivalence classes of simplicially enriched categories as their candidate for a ``homotopy theory" in Quillen's sense \cite[I,\S 2]{quillen}: that is, a context for encoding all homotopy invariants of any given model category $\mathcal C$ (and no more).  This is a point of view, rather than a precise statement, mainly because we have no good definition of ``all homotopy invariants" of a model category.  Nevertheless, homotopy theories in this sense seem to be the best approximation available.

However, the model category $\mathcal{SC}at$ and all its Quillen equivalent versions share the difficulty that they are hard to work and compute with.  For practical purposes, we therefore need smaller versions, obtained either by limiting the objects or making the hom-complexes simpler.  The way to do the latter is to take $n$-Postnikov approximations, which is a monoidal functor in $\SSets$ and so extends to $\mathcal{SC}at$.  We can then try to produce alternative models for $(n,1)$-categories.

Let $\mathcal C$ be an $(\mathcal S, \mathcal O)$-\emph{category}, that is, a small simplicial category with object set $\mathcal O$.  We can apply any monoidal functor $F \colon \mathcal S \rightarrow \mathcal V$ to get a $(\mathcal V, \mathcal O)$-category.  For example, let $P^n$ be the $n$th Postnikov section $P^n \colon \mathcal S \rightarrow P^n\mathcal S$.  Then $P^n\mathcal C$ is a $(P^n\mathcal S, \mathcal O)$-category.

\begin{itemize}
\item $P^0 \mathcal C$ is equivalent to $\pi_0 \mathcal C$, an $\mathcal O$-category, but it is cofibrant.  (In simplicial categories, it looks like $F_*(\pi_0\mathcal C)$.)

\item For $n=1$, note that we have a pair of semi-adjoint functors
\[ \xymatrix@1{\widehat \pi_1 \colon \SSets \ar@<.5ex>[r] & \mathcal Gpds \colon \nerve \ar@<.5ex>[l]} \] which commute with products and thus extend to simplicial categories.
Then $P^1 \mathcal C$ is equivalent to $\widehat \pi_1 \mathcal C$ via the nerve in $(\mathcal Gpd, \mathcal O)-Cat$, or \emph{track categories} \cite{BW}.

\item $P^2\mathcal C$ is equivalent to a double track category (enriched in a certain type of double groupoid \cite{BPaolT}).

\item Higher-order models are only conjectural.
\end{itemize}

Note that $(\mathcal S, \mathcal O)$-categories have $k$-invariants, taking values in the $(\mathcal S, \mathcal O)$-cohomology of their Postnikov sections $P^nX$ with coefficients in the natural system (which is a $\widehat \pi_1X$-module) $\pi_{n+1}X$ \cite{dksimploc}.  Since all functors involved are monoidal (except cohomology), they pass from $\SSets$ to $(\mathcal S, \mathcal O)$.

These show the importance of $(P^n\mathcal S, \mathcal O)$-categories, since:
\begin{enumerate}

\item they allow us to replace one model by a (hopefully simpler) one with the same weak homotopy type, since it is constructed inductively using ``the same" $k$-invariants; and

\item they themselves carry homotopy-invariant information, as we will see later.
\end{enumerate}

Even $(P^n\mathcal S, \mathcal O)$-categories are too unwieldy to be useful, so we need to restrict the object sets, too.  This leads to the notion of a ``mapping algebra" \cite[\S 9]{BBlaC}:  let $\mathcal C$  be a simplicial model category and $A$ an object of $\mathcal C$.  Define $\mathcal C_A$ to be the full sub-simplicial category of $\mathcal C$ generated by $A$ under suspensions and coproducts of cardinality less than some $\lambda$.  Alternatively, we may take all (homotopy) colimits of bounded cardinality.

\begin{definition}
An $A$-\emph{mapping algebra} is a simplicial functor $\mathcal X \colon \mathcal C_A \rightarrow \mathcal S$.
\end{definition}

\begin{example}
Choose an object $Y$ of $\mathcal C$ and define $(\mathcal M_AY)(A')= \Map_\mathcal C(A',Y)$ for any object $A'$ of $\mathcal C_A$.  This is a \emph{realizable} mapping algebra.
\end{example}

Note that if $\mathcal O=\ob(\mathcal C_A)$ and $\mathcal O^+=\mathcal O \cup \{\ast\}$, then $\mathcal X$ is just an $(\mathcal S, \mathcal O^+)$-category such that $\mathcal X |_\mathcal O=\mathcal C_A$ and $\Map_\mathcal X(\ast, -)=\ast$.

\begin{definition}
An $A$-mapping algebra $\mathcal X$ is \emph{realistic} if $\mathcal X$ preserves the limits in $\mathcal C_A^{op}$.  In other words, $\mathcal X(\Sigma A')=\Omega \mathcal X(A')$ and $\mathcal X(\vee A_i)=\prod_i \mathcal X(A_i)$.
\end{definition}

So, a realistic $\mathcal X$ is determined by $\mathcal X(A)$.

As a motivating example, let $\mathcal C=Top$ and $A=S^1$.  Then $\mathcal M_{S^1}=\Omega Y$ with additional structure.  In particular, it has an $A_\infty$-structure.  The conclusion is that any realistic $S^1$-mapping algebra is realizable, i.e., $\mathcal X(A)=\Omega Y$ for some $Y$.  When $A=S^k$, $\mathcal M_AY$ is a $k$-fold loop space, equipped with an action of all mapping spaces between (wedges of) spheres on it and its iterated loops.  A realistic $A$-mapping algebra is any space $X$ equipped with such an action.  Since this includes in particular the $A_\infty$ (or $E(k)$) structure, we see that any realistic $A$-mapping algebra is realizable.

For more general $A$, even $A=S^1 \vee S^2$, the situation is more complicated; the purely ``algebraic" approach of analyzing the group structure does not work.  However, by enhancing the structure of of an $A$-mapping algebra suitably one can still recover $Y$ from $\mathcal X$, up to $A$-equivalence \cite[\S 10]{BBlaC}.

There is a dual version in which we map into $A$ and its products and loop spaces.  The realizable version is denoted by $\mathcal M^AY$.

\begin{example}
The space $\mathcal C^A$ may consist of all $K(V,n)$ with $V$ an $\mathbb F_p$-vector space.  Note that all mapping spaces here are generalized Eilenberg-Mac Lane spaces.
\end{example}

If we apply the $n$th Postnikov section functor to an $A$-mapping algebra, we obtain an $n-A$-\emph{mapping algebra}, which is defined to be a $(P^n\mathcal S,\mathcal O^+)$-category extending $P^n \mathcal C_A$ as above.  Again, it may or may not be $n$-\emph{realistic}.

\begin{example}
When $n=0$, this is essentially a $\pi_0\mathcal C_A$-algebra; if $A$ is a homotopy cogroup object, this algebra is an algebraic theory in the sense of Lawvere \cite{law}.
\end{example}

\begin{example}
Note that if $Y$ is a connected space, then $\pi_0 \Omega^kY=\pi_kY$, so for $A=S^1$, $P^0\mathcal M_AY$ encodes all homotopy groups of $Y$, as well as the action of all primary homotopy groups.
\end{example}

What can we say about an $n$-realistic $A$-mapping algebra?  (In other words, what is $P^n \mathcal X$ for $\mathcal X$ realistic, or something that behaves like it, as $\mathcal X$ may not exist?  It should be like truncating an operad at level $n$.)  The answer is that we recover an $n$-\emph{stem} for $Y$ \cite[\S 1]{BBlaS}.  For example, $P^{n+k}X \langle k-1 \rangle$ is the $(k-1)$-connective cover of $P^{n+k}X$ with only homotopy groups from $k$ to $n+k$.  We can't recover all of $Y$, since we can't deloop.

We know that the $E_2$-term of the Adams spectral sequence for a space $Y$ is $\Ext^{\ast\ast}_\mathcal A(H^*(Y; \mathbb F_2), \mathbb F_2)$, which depends only on $H^*(Y)$ as a module or algebra over the Steenrod algebra.  What about the higher terms?

\begin{theorem} \cite{BJibS,BBlaS}, \cite{BBlaH}
The $E_r$-term of the (stable or unstable) Adams spectral sequence for $Y$ is determined by the $(r-2)$nd order derived functors of $\mathcal M^AY$, where $A$ here is $\{K(\mathbb F_p,n)\}_{n=1}^\infty$, and these may depend in turn on $P^{n-2}\mathcal M^AY$, the $(r-2)$-dual stem.
\end{theorem}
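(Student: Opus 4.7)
The plan is to read the entire Adams spectral sequence as an invariant of the mapping algebra $\mathcal M^A Y$ and then match its pages to the Postnikov filtration on that mapping algebra. First, I would recall that an Adams resolution of $Y$ is built entirely out of maps from $Y$ into (products of) the $K(\mathbb F_p, n)$ and the obstructions that arise at each stage, and every such datum lives inside $\mathcal M^A Y$: the objects $A'$ of $\mathcal C^A$ are the Eilenberg-Mac Lane spaces and their finite (homotopy) limits, while $\mathcal M^A Y$ records $\Map(Y, A')$ together with the natural structure maps between them. Hence the whole spectral sequence can be constructed functorially from $\mathcal M^A Y$, and the only question is how much of $\mathcal M^A Y$ is needed at each page.

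Next, I would calibrate ``how much'' against the classical input. The $E_2$-term is $\Ext^{**}_{\mathcal A}(H^*Y, \mathbb F_p)$, and these $\Ext$ groups depend only on $H^*Y$ with its Steenrod-algebra action, i.e., only on $\pi_0$ of all the mapping spaces in $\mathcal M^A Y$ together with their composition structure; that is precisely the content of $P^0 \mathcal M^A Y$, matching the theorem at $r=2$ (so $r-2=0$). The differential $d_3$ is detected by secondary cohomology operations, which arise from Toda brackets built from $\pi_1$ of the relevant mapping spaces, and so are visible in $P^1 \mathcal M^A Y$ but not in $P^0 \mathcal M^A Y$. Proceeding by induction on $r$, the $d_r$-differential is computed from $r$-fold Toda brackets, and these assemble out of the $(r-2)$nd stage of the Postnikov tower of $\mathcal M^A Y$; this is what is meant by the $(r-2)$-dual stem.

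To make ``$(r-2)$nd order derived functor'' precise, I would model $\mathcal M^A Y$ by a free simplicial resolution in the category of realistic $A$-mapping algebras, form the cosimplicial replacement dual to such a resolution, and filter by Postnikov stages: the $q$th order derived functor, defined by applying $P^q$ to the resolution, then fits into an exact couple whose associated spectral sequence is the Adams spectral sequence for $Y$. The main obstacle, and the technical heart of the proof, is precisely this comparison. One must (i) set up a well-behaved theory of higher-order derived functors for the non-additive functor $Y \mapsto \mathcal M^A Y$, (ii) identify its $(r-2)$nd stage with Massey-product descriptions of the classical $d_r$, and (iii) control convergence and naturality so that the dependence really stops at $P^{r-2}\mathcal M^A Y$, neither needing lower-order data than the $E_r$-page already sees nor requiring higher Postnikov information that would contradict the inductive step. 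This is the strategy carried out in \cite{BJibS,BBlaS} and \cite{BBlaH}, where the verification of step (ii) requires the explicit computation of $k$-invariants of the Postnikov tower of $\mathcal M^A Y$ and their identification with the obstruction classes produced by the Adams tower.
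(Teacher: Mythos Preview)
The paper does not contain a proof of this theorem: it appears in a set of workshop lecture notes (in the talk given by David Blanc), is stated with citations to \cite{BJibS,BBlaS} and \cite{BBlaH}, and no argument is supplied beyond the surrounding motivational remarks about $A$-mapping algebras, Postnikov sections, and stems. So there is no ``paper's own proof'' to compare your proposal against.

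That said, your outline is consistent with the narrative the talk sets up: interpret $\mathcal M^AY$ as encoding all the mapping-space data into Eilenberg--Mac~Lane spaces, observe that the $E_2$-term depends only on $\pi_0$ of these mapping spaces (the $P^0$ level), and argue inductively that the $d_r$-differentials are governed by $(r-2)$nd order information, organized via higher-order derived functors and the Postnikov filtration. The remark in the notes that the $(\mathcal S,\mathcal O)$-category $\mathcal C^A$ has nontrivial $k$-invariants even though the individual spaces are GEMs is exactly the point your step (iii) gestures at. What you have written is a reasonable roadmap toward the results of the cited papers, but the actual verification---in particular your steps (i)--(iii)---is the substance of those references and is not reproduced here, so your proposal should be read as a sketch of the strategy rather than as something that can be checked against the present paper.
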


\begin{remark}
At first glance it appears that $\mathcal M^AY$ is uninteresting in this case, since all the spaces in question are products of $\mathbb F_p$-Eilenberg-Mac Lane spaces, so their $k$-invariants are trivial.  But, this is not true of $\mathcal C^A$ as an $(\mathcal S,\mathcal O)$-category!
\end{remark}

\section{Application: The Cobordism Hypothesis}

The main application of $(\infty, n)$-categories at this time is Lurie's use of them in his proof of the cobordism hypothesis.  We begin with Baez-Dolan's original formulation of this conjecture.

Define a category $Cob(n)$ as follows.
\begin{itemize}
\item The objects are closed oriented $(n-1)$-dimensional manifolds $M$.

\item $\Hom(M,N)$ consists of diffeomorphism classes of cobordisms from $M$ to $N$, i.e., $n$-dimensional manifolds $B$ with $\partial B =\overline M \amalg N$, where $\overline M$ is the manifold $M$ with the opposite orientation.
\end{itemize}

Notice that $Cob(n)$ is symmetric monoidal under the disjoint union.  Let $Vect(k)$ be the category of vector spaces over a field $k$, and notice that this category is monoidal under the tensor product over $k$.  The following definition is due to Atiyah \cite{atiyah}.

\begin{definition}
A \emph{topological field theory of dimension} $n$ is a symmetric monoidal functor $Z \colon Cob(n) \rightarrow Vect(k)$.
\end{definition}

Notice that if $Z(M)=V$, then we must have $Z(\overline M)=V^\vee$, the dual vector space.  In particular, vector spaces in the image of $Z$ should be finite-dimensional.

In dimension 1, a topological field theory is completely determined by its value on a point, since all 0-dimensional manifolds are just collections of points.

In dimension 2, a topological field theory is a commutative Frobenius algebra.  The idea here is to break a cobordism up into manageable pieces along closed 1-dimensional manifolds.

We would like to do this kind of procedure in higher dimensions, but it doesn't work as well, since we may need to cut along manifolds with boundary or manifolds with corners.  This situation leads us to make a higher categorical version of $Cob(n)$.

\begin{definition}
The (weak) $n$-category $Cob(n)$ has:
\begin{itemize}
\item objects 0-dimensional manifolds,

\item 1-morphisms cobordisms between them,

\item 2-morphisms cobordisms between cobordisms,

\item $n$-morphisms diffeomorphism classes of cobordisms between $\cdots$ cobordisms between cobordisms.
\end{itemize}
\end{definition}

Then we define an $n$-\emph{extended topological field theory of dimension} $n$ to be a symmetric monoidal functor $Cob_n(n) \rightarrow \mathcal C$, where $\mathcal C$ is come symmetric monoidal (weak) $n$-category, which is some generalization of $Vect(k)$.  The images in $\mathcal C$ must be ``fully dualizable" in that they must have duals but also must have a notion of duals for morphisms up to level $n-1$.

\begin{theorem} (Baez-Dolan Cobordism Hypothesis)
Let $\mathcal C$ be a symmetric monoidal weak $n$-category.  Then the evaluation functor $Z \mapsto Z(\ast)$ defines a bijection between isomorphism classes of framed extended topological field theories and isomorphism classes of fully dualizable objects of $\mathcal C$.
\end{theorem}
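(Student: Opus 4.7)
The plan is to follow Lurie's strategy, which recasts the statement inside the world of $(\infty,n)$-categories, where the higher morphisms in $Cob_n(n)$ record the topology of diffeomorphism groups rather than being collapsed to isomorphism classes. The first step is to replace the weak $n$-category $Cob_n(n)$ with an $(\infty,n)$-category $\mathrm{Bord}_n^{fr}$: objects are framed $0$-manifolds, $k$-morphisms for $k<n$ are framed $k$-manifolds with corners embedded in some $\mathbb R^N$ exhibited as cobordisms, and the $n$-morphisms and higher cells are given by the spaces of such embeddings (so diffeomorphism groups act on the mapping spaces by reparametrization). This can be made precise via a complete Segal / $n$-fold Segal space construction, as sketched in the preceding section. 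The target symmetric monoidal $(\infty,n)$-category $\mathcal C$ is assumed to have duals up through level $n$, so that the notion of ``fully dualizable'' object $\mathcal C^{fd}$ makes sense.

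Next I would upgrade evaluation at a point to a map of spaces (or $\infty$-groupoids)
\[ \mathrm{ev} \colon \Fun^{\otimes}(\mathrm{Bord}_n^{fr}, \mathcal C)^{\simeq} \longrightarrow \mathcal C^{fd, \simeq}, \]
and verify that it lands in fully dualizable objects. This is the easy direction: every framed point in $\mathrm{Bord}_n^{fr}$ acquires a dual via the elbow cobordism, with the zig-zag identities witnessed by higher cobordisms, and iterating produces adjoints for all $k$-morphisms up through level $n-1$. Since symmetric monoidal functors preserve duals and adjoints, $Z(\ast)$ is fully dualizable.

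The core of the proof is then to produce an inverse, i.e.\ to show that $\mathrm{Bord}_n^{fr}$ is the free symmetric monoidal $(\infty,n)$-category with duals on one generator. I would do this by induction on $n$, using Morse theory to decompose a cobordism $M$ along a generic Morse function into a composition of elementary handle attachments; each index-$k$ handle corresponds to a specific universal cell (a unit/counit of an adjunction at level $k$) forced upon us by full dualizability. Varying the Morse function provides the higher coherences: the space of Morse functions is connected up to adding cancelling pairs and trading handles, and Cerf theory gives the $2$-cells between such decompositions, with higher Cerf-type strata giving still higher cells. The framing is tracked by noting that the stabilizer of a framed point is $O(n)$, so the framed cobordism hypothesis is equivalent to the statement that the space of $n$-dualizable objects carries a canonical $O(n)$-action and that generators are free under it.

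The principal obstacle is the combinatorics and coherence of the Morse/Cerf-theoretic step: one must show that every relation between handle decompositions is generated by moves which are already visible in $\mathcal C^{fd}$ by virtue of full dualizability, and that these relations are realized by actual higher cobordisms rather than only up to a further layer of choice. In Lurie's approach this is handled by an inductive ``unfolding'' construction which reduces the $(\infty,n)$-categorical statement to a relative version of the $(\infty,n-1)$-categorical statement applied to links of handle cores, together with a careful analysis of the tangential structure. Once the universal property is established, the bijection $Z \mapsto Z(\ast)$ follows formally, and the general tangential structure case (including the framed case stated in the theorem) is recovered by taking $O(n)$-homotopy fixed points on both sides.
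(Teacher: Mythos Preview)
Your proposal and the paper take essentially the same route: neither gives a self-contained proof, and both defer to Lurie's argument for the $(\infty,n)$-categorical version, which is then truncated to recover the weak $n$-category statement. The paper's account is in fact terser than yours, consisting only of a five-point outline: (1) set up an inductive step from $(\infty,n-1)$ to $(\infty,n)$; (2) reduce to manifolds with no tangential structure; (3) reformulate so as to work with $(\infty,1)$-categories rather than $(\infty,n)$-categories; (4) use Morse theory and handle decompositions on a variant of $\mathrm{Bord}_n$; (5) identify that variant with $\mathrm{Bord}_n$ itself.

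Your sketch covers (1) and (4) well and gestures at (2) via the $O(n)$-action, but you omit step (3)---the passage to an $(\infty,1)$-categorical statement, which is where Lurie actually carries out the Morse-theoretic argument---and step (5). One small confusion: you write that ``the general tangential structure case (including the framed case stated in the theorem) is recovered by taking $O(n)$-homotopy fixed points.'' This is inverted. The framed case is the universal one and is what the theorem asserts directly; other tangential structures (orientations, spin, unoriented, etc.) are obtained \emph{from} the framed statement by taking homotopy fixed points for the appropriate subgroup of $O(n)$. In Lurie's proof the logical flow is the reverse of what one might expect: one reduces the framed statement to the unoriented one (the paper's step (2)), proves that, and then climbs back up.
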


This conjecture was proved as stated in dimension 1 by Baez and Dolan \cite{bd}, and in dimension 2 by Schommer-Pries \cite{sp}.  However, it gets difficult because weak $n$-categories get difficult to manage.

However, we still have lost information about our cobordisms by taking diffeomorphism classes at level $n$.  Heuristically, we can define instead an $(\infty, n)$-category $Bord_n$ with:
\begin{itemize}
\item objects 0-dimensional manifolds,

\item 1-morphisms cobordisms between them,

\item 2-morphisms cobordisms between cobordisms,

\item $n$-morphisms cobordisms between $\cdots$ cobordisms between cobordisms,

\item $n+1$-morphisms diffeomorphisms of the cobordisms of dimension $n$,

\item (n+2)-morphisms isotopies of diffeomorphisms, and so forth.
\end{itemize}

In fact, $Bord_n$ can be realized as an $n$-fold complete Segal space.  We actually want a more structured version, $Bord_n^{fr}$ where all manifolds are equipped with a framing.  The symmetric monoidal structure can be described roughly as described in Hinich's talk. The $(\infty, n)$-categorical version of the cobordism hypothesis was proved by Lurie, and can be ``truncated" to prove the original version.

\begin{theorem}
Let $\mathcal C$ be a symmetric monoidal $(\infty, n)$-category.  Then $Z \mapsto Z(\ast)$ determines a bijection between isomorphism classes of symmetric monoidal functors $Bord_n^{fr} \rightarrow C$ and isomorphism classes of fully dualizable objects of $\mathcal C$.
\end{theorem}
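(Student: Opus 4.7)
The plan is to follow Lurie's strategy and proceed by induction on $n$, exploiting the complete Segal space model for $Bord_n^{fr}$ so that the heuristic description can be made rigorous and the universal property of the bordism category can be proved level by level. The base case $n=1$ is essentially elementary once the framework is in place: framed $1$-manifolds with boundary decompose via Morse theory into elementary pieces (a point, its dual, and evaluation/coevaluation cobordisms), and a direct check shows that choosing a fully dualizable object $X \in \mathcal{C}_{\langle 1\rangle}$ together with compatible duality data determines, and is determined by, a symmetric monoidal functor $Bord_1^{fr} \to \mathcal{C}$. So the functor $Z \mapsto Z(\ast)$ is a bijection on isomorphism classes in dimension $1$.

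For the inductive step, the main construction I would use is the ``unfolding'' of $Bord_n^{fr}$: one rewrites it as (the complete Segal space associated to) a simplicial object $[m] \mapsto Bord_n^{fr,[m]}$ whose $m$-simplices are $n$-dimensional framed bordisms equipped with a height function whose critical values are separated by $m$ chosen levels. The key technical input is a parametrized Morse/Cerf theory result, which says that after discarding contractible spaces of choices, any $n$-bordism is assembled from ``handle attachments,'' and each handle attachment is controlled by $(n-1)$-dimensional framed data together with a choice of adjoint/dual. Translated categorically, this exhibits $Bord_n^{fr}$ as freely generated, as a symmetric monoidal $(\infty,n)$-category with adjoints, by its underlying object $\ast$ together with universal duality data inherited from $Bord_{n-1}^{fr}$.

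Given this unfolding, surjectivity of $Z \mapsto Z(\ast)$ is proved as follows. Start with a fully dualizable $X \in \mathcal{C}$. By full dualizability, one can choose, coherently and essentially uniquely, all the adjoints and higher adjunction data for the morphisms built out of $X$; this produces by induction a symmetric monoidal functor from the inductively unfolded bordism category into $\mathcal{C}$ sending $\ast$ to $X$. Injectivity follows because any two such extensions are forced by the recognition principle for $Bord_n^{fr}$ (plus uniqueness of adjoints up to contractible choice in an $(\infty,n)$-category) to be naturally isomorphic; this uniqueness is again propagated from the $(n-1)$-dimensional case by the inductive description. The simplicial/$\Theta_n$-space language from the preceding section is what makes all these coherence problems tractable, since mapping spaces in $(\infty,n)$-categories are themselves $(\infty,n-1)$-categorical and hence admit the same recursive analysis.

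The main obstacle, and the technically hardest part of the argument, is the parametrized Morse-theoretic input: one must show that the moduli space of height functions on a given framed bordism is either contractible or else contributes only cells that can be absorbed into the complete Segal space structure, so that the induction compares the correct $(\infty,n-1)$-categorical mapping spaces on both sides. A secondary difficulty is handling the framing consistently through the induction, since framings on $n$-manifolds restrict to stabilized framings on $(n-1)$-manifolds and the $O(n)$-action on the space of fully dualizable objects must be tracked carefully (this is what becomes, after truncation, the $SO(n)$-homotopy fixed-point description underlying the oriented variant). Once these geometric inputs are in place, the remainder of the argument is a formal manipulation of the $(\infty,n)$-categorical universal properties developed earlier in the paper.
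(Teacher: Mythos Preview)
Your proposal is correct and follows essentially the same approach as the paper, which itself only records a five-step outline of Lurie's argument rather than a proof: induction on $n$, Morse-theoretic handle decompositions via an auxiliary ``unfolded'' version of $Bord_n$, and a final comparison showing that variant is equivalent to $Bord_n$.  Two steps in the paper's outline are underemphasized in your sketch: the reduction from framed to \emph{unstructured} bordisms (trading the framing for the $O(n)$-action on the space of fully dualizable objects, which you mention only as a secondary difficulty rather than as a structural step), and the reformulation of the inductive statement as a purely $(\infty,1)$-categorical assertion, which is what makes the Morse-theoretic input tractable; otherwise your account matches the paper's summary closely.
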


The main steps of the proof are as follows:
\begin{enumerate}
\item Give an inductive formulation: assume true for $(\infty, n-1)$-categories and describe the data needed to move up to $(\infty, n)$-categories and reduce the whole proof to this inductive step.

\item Reduce to the case where we have no additional structure on the manifolds (i.e., no framings).

\item Reformulate so that we can consider $(\infty, 1)$-categories rather than $(\infty, n)$-categories.

\item Use Morse theory to understand handle decompositions using a variation on $Bord_n$.

\item Show that the variation of $Bord_n$ is equivalent to $Bord_n$.
\end{enumerate}

\section{The cobordism $(\infty, n)$-category as an $n$-fold complete Segal space}

\centerline{(Additional talk given the following week at Hebrew University)}

\vskip .1 in

We want to know how cobordisms form an $(\infty, n)$-category, specifically in one of the precise models.  Here, we will look at a simpler case of an $(\infty, 1)$-category informally described as having:
\begin{itemize}
\item objects closed $(n-1)$-dimensional manifolds,

\item 1-morphisms cobordisms between them,

\item 2-morphisms diffeomorphisms,

\item 3-morphisms isotopies, etc.
\end{itemize}

How can this information be encoded in a complete Segal space?  Our treatment here is taken from \cite{lurietft}.

We begin with some notation.  Let $V$ be a $d$-dimensional vector space.  Let $Sub_0(V)$ be the space of closed submanifolds $M \subseteq V$ of dimension $(n-1)$, and let $Sub(V)$ be the space of compact $n$-dimensional manifolds $M \subseteq V \times [0,1]$ such that $\partial M = M \cap (V \times \{0,1\}$.

\begin{definition}
For $K \geq 0$, define the space
\[ SemiCob(n)_k^V = \{(t_0 < t_1 < \cdots < t_k; M) \} \]
where each $t_i \in \mathbb R$ and $M \subseteq [t_0, t_k]$ is either
\begin{itemize}
\item a smooth submanifold of dimension $(n-1)$, when $k=0$, or

\item a properly embedded submanifold of dimension $n$ which intersects each $V \times \{t_i\}$ transversally, for $k>0$.
\end{itemize}
\end{definition}

Notice that $SemiCob(n)^V_0$ is equivalent to $\mathbb R \times Sub_0(V)$, and that for $k>0$, $SemiCob(n)^V_k$ is an open subspace of $Sub(V) \times \{t_0 < \cdots <t_k\}$.

A strictly increasing map $f \colon [k] \rightarrow [k']$ in ${\bf \Delta}$ induces
\[ f^* \colon SemiCob(n)^V_{k'} \rightarrow SemiCob^V_k \] given by
\[ (t_0< \cdots, t_k; M) \mapsto (t_{f(0)}< \cdots < t_{f(k)}; M \cap V \times [t_{f(0)}, t_{f(k)}]). \]  Hence, we get a semi-simplicial space (with face maps but no degeneracy maps) $SemiCob(n)^V$. Then define $SemiCob(n)$ as the colimit of $SemiCob(n)^V$, where $V$ ranges over all finite-dimensional subspaces of some model of $\mathbb R^\infty$.

If we actually want a simplicial space (with degeneracy maps), it gets more complicated.

\begin{definition}
Let $PreCob(n)^V_k$ be the space $\{(t_0 \leq \cdots \leq t_k; M) \}$ with $t_i \in \mathbb R$ and $M \subseteq V \times \mathbb R$ a possibly non-compact $n$-dimensional manifold such that the projection $M \rightarrow \mathbb R$ has critical values disjoint from $\{t_0, \ldots, t_k\}$.  The subspace $PreCob^0(n)$ consists of the points actually satisfying $t_0 < \cdots < t_k$.
\end{definition}

Since we allow equality between the points, we have degeneracy maps which repeat them, and hence a simplicial space $PreCob(n)^V$.

There is a levelwise weak equivalence
\[ f \colon PreCob^0(n)^V \rightarrow SemiCob(n)^V \] given by
\[ (t_0 < \cdots <t_k; M) \mapsto (t_0 < \cdots < t_k; M \cap (V \times [t_0, t_k])). \]  Define $PreCob(n)$ to be the colimit of the $PreCob(n)^V$, as before.  then $PreCob(n)$ is a Segal space (possibly requiring Reedy fibrant replacement).

However, it is not necessarily complete: being so would violate the $s$-cobordism theorem.  To make it complete, we could apply a localization functor in $\css$.  However, it is not usually a problem here; since we are mapping out of it, we really only need for it to be cofibrant.

\end{document}